\newtheorem{theorem}{Theorem}
\newtheorem{corollary}[theorem]{Corollary}
\newtheorem{definition}[theorem]{Definition}
\newtheorem{lemma}[theorem]{Lemma}
\newtheorem{notation}[theorem]{Notation}
\newtheorem{proposition}[theorem]{Proposition}
\newtheorem{remark}[theorem]{Remark}
\title{{\em Local} Hochschild Homology of Hilbert-Schmidt Operators on Simplicial Spaces}
\author{Nicolae Teleman \\
Dipartimento di Scienze Matematiche, Universita' Politecnica delle Marche \\
E-mail:  teleman@dipmat.univpm.it}
\date{}                          
\begin{document}
\maketitle


\section{abstract}  
{\em Local} Hochschild, cyclic Homology and $K$-theory were introduced by N. Teleman in \cite{Teleman_arXiv} with the purpose of unifying different settings of the index theorem. This paper is one of the research topics announced in \cite{Teleman_arXiv}, \S 10. 
The definition of these new objects inserts
 the Alexander-Spanier idea for defining the co-homology \cite{Spanier} into the corresponding constructions. This is done by allowing only chains which have {\em small} support about the diagonal. This definition, applicable at least in the case of the Banach sub-algebras of the algebra of bounded operators on the Hilbert space of $L_{2}$-sections in vector bundles, differs from various constructions due to A. Connes \cite{Connes}, A. Connes, H.Moscovici \cite{Connes-Moscovici}, M. Puschnigg \cite{Puschnigg}, 
 J. Cuntz \cite{Cuntz}.
 \par
 In this paper we prove that the {\em local} Hochschild homology of the Banach algebra of Hilbert-Schmidt operators on any countable, locally finite homogeneous simplicial complex  $X$ is naturally isomorphic the Alexander-Spanier homology of the space $X$, Theorem 1. This result may be used to compute the {\em local} periodic cyclic homology of the algebra of Hilbert-Schmidt operators on such spaces $X$. The same result should hold in the case of the algebra of trace class operators $\mathit{L}^{1}$ as well as in the case of smoothing operators $\mathit{s} \subset \mathit{L}^{1}$.
 \par
 In addition, the tools we introduce in this paper should apply also for computing the {\em local} Hochschild and periodic cyclic homology of the Schatten class ideals $\mathit{L}^{p}$, at least for the other values $1 < p < 2 $.
 \par
 Parts of what is presented here were stated in author's lecture at the International Alexandroff Reading Conference, Moscow, 21-25 May 2012.  

\section{introduction}  
The main result of this paper concerns the computation of the {\em local} continuous Hochschild homology of the Banach algebra of Hilbert-Schmidt operators on countable, locally finite homogeneous simplicial complexes $X$. Theorem 1 states
 that it  is naturally isomorphic the Alexander-Spanier homology of the space $X$.  This result complements \cite{Teleman_arXiv} Proposition 26 which states that the {\em local} continuous Hochschild homology of the algebra of trace class operators on smooth manifolds is at least as big as the Alexander-Spanier homology of the manifold. While the proof of  \cite{Teleman_arXiv} Proposition 26 used the Connes-Moscovici \cite{Connes-Moscovici} Theorem 3.9, the treatment we present here is independent of it. 
\par
We recall that the need to consider {\em local} homological objects \cite{Teleman_arXiv} comes from at least two directions.
On the one side, -i) the Hochschild and cyclic homology, as well as the topological $K$-theory of the Banach algebra of bounded operators and various Schatten classes of compact operators on the Hilbert space of $L_{2}$ sections on a space $X$ is trivial,  see e.g. \cite{Connes}, \cite{Connes-Moscovici}, \cite{Cuntz};  on the other side, -ii) although the Alexander-Spanier homology appears naturally in these papers, see \cite{Connes-Moscovici}, its entrance into the theory does occur dually, in the co-homological context. 
\par
The triviality of the homologies in -i) is not surprising because the Banach algebras involved are independent of the space onto which they operate and therefore they do not {\em see} the space. This situation is similar to the phenomenon which occurs into the construction of the Alexander-Spanier co-homology \cite{Spanier} before imposing a control onto the supports of the chains (see \S 8.1 for more details). 
Our definition of {\em local} Hochschild homology inserts in its construction the Alexander-Spanier idea of control on the supports. Theorem 1, proves that building into the theory the control over the support of the chains allows one to obtain the right result.
\par
The paper is essentially self-contained. To facilitate the reading of this paper, the paper provides the basic necessary prerequisites. The description of the structure and techniques of the paper follow.
\par
Our computation of the Hochschild and {\em local} Hochschild homology of the algebra of Hilbert-Schmidt operators is based on a wavelet description of both.
\par
In \S 4 we gave some basic facts about Hilbert-Schmidt operators. To start our considerations we choose
an ortho-normal base in the Hilbert space of $\L_{2}$-functions on each maximal dimension simplex of the simplicial complex $X$. In the \S 10 we pass to analyse the {\em local} part of the argument. Here we see that having to consider smaller and smaller supports of the chains we are forced to consider finer and finer subdivisions of $X$ with the corresponding ortho-normal Hilbert bases, which explains the wavelets structure stated above.
\par
\S 5 recalls the basic definitions of the Hochschild homology. 
\par
\S 6 introduces the basic algebraic constructions of the paper. The continuous Hochschild complex over the algebra of Hilbert-Schmidt operators is decomposed in two sub-complexes (Proposition 7): the sub-complex $C^{0}_{\ast}(HS)$ generated by all chains which possess a {\em gap} in the kernel (Definition 4) and the {\em diagonal} sub-complex $C^{\Delta}_{\ast}(HS)$ generated by kernels without gaps.
\par
In \S 6.1 the operator $s$ is defined on the sub-complex $C^{0}_{\ast}(HS)$. Lemma 9 states that this sub-complex is acyclic. Therefore, the Hochschild homology of the algebra of Hilbert-Schmidt operators is the homology of the diagonal sub-complex $C^{\Delta}_{\ast}(HS)$.
\par
The elements of the diagonal sub-complex $C^{\Delta}_{\ast}(HS)$ have a simple description, see (14). Its elements present a {\em continuity} of the wavelets description (both, in terms of supports and elements of the ortho-normal base), in which a {\em trailing} phenomenon, both in terms of the supports and elements of the ortho-normal basis manifests. 
\par
In \S 6.2 we introduce the homotopy operator $S$ on the diagonal complex, Definition 10. It shows that the identity mapping is homotopic to an operator $\theta$. The operator $\theta$ replaces  (in the expression of the first tensor-factor of  the chain) any element of the ortho-normal base with a chosen element $I$ of the ortho-normal base, see Proposition 13. Each time the operator $\theta$ is applied, just one of the elements of the ortho-normal base is replaced with the fixed element $I$.  
After $p+1$ such modifications, any $p$-chain of the diagonal complex becomes  homotopic to a chain which uses only the chosen element $I$, on each maximal simplex of the space $X$,  see Proposition 27 i). Such elements form a sub-complex of the diagonal complex. We call it  {\em reduced diagonal complex}  and we denote it by  $C^{I}_{\ast}(HS)$.
\par
The homology of the reduced diagonal complex is analysed by means of a new homotopy operator,  $\tilde{S}$, defined on this the 
sub-complex, see formula (49). The operator $\tilde{S}$ is essentially the operator $S$ multiplied by a polynomial in the operator 
$\theta$. Theorem 28 states further that the homology of the reduced diagonal complex is isomorphic to the homology of the diagonal complex, and therefore it gives the Hochschild homology of the algebra of Hilbert-Schmidt operators.
 \par 
 We stress that, so far, we have not made any assumption on the supports of the chains. However, the homotopy operators $s$ and $S$ are local.  For this reason, when we will need to keep track of the supports of the chains, willing to keep them small,  we will still be able to use them. 
 \par
 In the same \S 6 we introduce the Notation 14, which will lead our steps toward the understanding of the parallelism between Hochschild homology of the algebra of Hilbert-Schmidt operators and the Alexander-Spanier homology. This would help us to understand, in particular, in topological terms, why the Hochschild homology, with no control on the supports of the chains, is trivial, Theorem 29. On the other side, this will help us to explain too, in \S 10,  why by considering {\em small} supports, both in the Hochschild homology complex and Alexander-Spanier complex, allows one to obtain the isomorphism between them. 
 \par
 In \S 8 we discuss Alexander-Spanier co-homology \S 8.1 and homology \S 8.2.
 \par
 In \S 8.3 we show that the the reduced diagonal complex, with no control on the supports, 
 is isomorphic to the Alexander-Spanier homology complex, with no control on the supports; therefore, they are trivial.
\par
In \S 7 we check that the operators we employ in our constructions keep us inside the continuous Hochschild complex.
\par
\S 9 we to take care of the supports, both in the Hochschild and Alexander-Spanier complexes.
Here we introduce a compatible simplicial filtration in both complexes. This filtration although interesting, is not used further
in this paper.
\par
Finally, in \S 10 we define carefully the {\em local} Hochschild homology of the algebra of Hilbert-Schmidt operators and we show that it is isomorphic to the Alexander-Spanier homology.  At this level the wavelets phenomenon appears more clearly. 
\par
The basic homotopies and isomorphisms discussed in \S 6-9 are local and therefore they pass to the {\em local} Hochschild and Alexander-Spanier homology complexes. The only difference, which appears by passing to the  {\em local} structures, occurs in the
homology of the Alexander-Spanier homology, which is trivial if no control is imposed on the supports and changes to the singular homology of the space, if the support controls are imposed. 
\par
The whole description of the {\em local} Hochschild homology we present here helps us to better understand how the 
{\em wavelets} pieces of the kernel of operators organise themselves to provide topological information.  
\par
The author thanks Jean-Paul Brasselet and Andr$\acute{e}$ Legrand for stimulant conversations.
\section{The Main Result.}     

\begin{theorem}
The {\em local} continuous Hochschild homology of the algebra of real, resp. complex, Hilbert-Schmidt operators on the countable, locally finite, homogeneous $n$-dimensional simplicial space $X$ is naturatelly isomorphic to the real, resp. complex, Alexander-Spanier homology of $X$.
\end{theorem}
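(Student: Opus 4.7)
The plan is to reduce the local Hochschild complex of the Hilbert--Schmidt algebra to a complex that is manifestly isomorphic to the local Alexander--Spanier complex of $X$, in three successive steps, checking at each stage that the chain homotopies involved are local and therefore pass to the support-controlled setting.

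First, I would start from the decomposition $C_{*}(HS)=C^{0}_{*}(HS)\oplus C^{\Delta}_{*}(HS)$ provided by Proposition 7, and use the contracting homotopy $s$ of Lemma 9 to kill the gap subcomplex $C^{0}_{*}(HS)$. Because $s$ only acts on tensor factors whose kernels present a gap about the diagonal, it is local in the sense relevant to \S 10, so the inclusion $C^{\Delta}_{*}(HS)\hookrightarrow C_{*}(HS)$ is a quasi-isomorphism after imposing any reasonable support filtration. This reduces the problem to computing the local homology of the diagonal subcomplex.

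Next, I would use the operator $\theta$ and its homotopy $S$ of Proposition 13 to replace, one tensor factor at a time, an arbitrary basis element by the fixed reference element $I$; Proposition 27\,(i) shows that after $p+1$ iterations every $p$-chain of $C^{\Delta}_{*}(HS)$ becomes homotopic to a chain of the reduced diagonal subcomplex $C^{I}_{*}(HS)$, and Theorem 28 upgrades this to a quasi-isomorphism via the refined operator $\tilde S$ of formula (49). Since both $S$ and $\tilde S$ are local, the same statement persists under the support filtration. The final step is to appeal to the identification of \S 8.3, which realises $C^{I}_{*}(HS)$ as the Alexander--Spanier chain complex of $X$ in a support-preserving way, and then to take the direct limit over the support filtration on both sides to obtain the isomorphism with the local Alexander--Spanier homology.

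The main obstacle, in my view, lies in the interplay between the wavelet refinement of the orthonormal bases and the Alexander--Spanier control on supports. When we insist on ever smaller operator-kernel supports about the diagonal of $X\times X$, we are forced to subdivide $X$ and to pass to orthonormal bases adapted to finer simplices; one has to check that the homotopies $s$, $S$ and $\tilde S$, together with the identification in \S 8.3, are compatible with this wavelet refinement, so that the operator-kernel notion of ``small support about the diagonal'' matches, in the limit, the Alexander--Spanier notion of ``small support about the diagonal'' of $X^{p+1}$. Once this compatibility is carefully organised---essentially a bookkeeping across refinement levels, together with a quantitative estimate that the homotopies shrink, or at worst preserve, the diameter of the chains they act on---the naturality of the resulting isomorphism is formal, and the theorem follows by passing to the direct limit.
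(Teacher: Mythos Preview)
Your proposal follows essentially the same route as the paper: the gap/diagonal splitting of Proposition~7 together with the homotopy $s$ of Lemma~9, the reduction $C^{\Delta}_{*}\to C^{I}_{*}$ via $S$, $\theta$, $\tilde S$ (Propositions~13 and~27, Theorem~28), the identification of $C^{I}_{*}$ with the Alexander--Spanier complex in \S8.3, and then the locality and wavelet-refinement bookkeeping of \S\S9--10. One minor correction: the paper defines the local Hochschild homology as a \emph{projective} limit over shrinking supports (Definition~57 and formula~(110)), not a direct limit, so be careful with the direction of the limiting process when you write the final step.
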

\begin{proof}
The rest of this paper is devoted to the proof of this theorem.
\end{proof}

\section{Preliminaries and Notation}  

\subsection{The Space. Hilbert-Schmidt Kernels and Operators}  

Let $X$ be a connected, locally finite, countable simplicial set of dimension $n$.  Let $\Delta_{\alpha}$, $\alpha \in \Lambda$, denote all $n$-dimensional simplices of $X$. We assume that any simplex of $X$ is contained in an $n$-dimensional simplex of $X$; such a simplicial complex will be called {\em homogeneous}.
In particular, $X$ might be an $n$-dimensional pseudo-manifold or manifold.
We assume that each simplex $\Delta_{\alpha}$ is endowed with a Lebesque measure $\mu_{\Delta_{\alpha}}$.
\par
Let $\{ e_{\alpha}^{n}  \}_{n \in N}$ be an ortho-normal basis of $L_{2}$ real/complex valued functions on $\Delta_{\alpha}$. 
In the Introduction its  elements were referred to as wavelets. 
\par
Then the complex conjugates $\{ \bar{e}_{\alpha}^{n}  \}_{n \in N}$ form too an ortho-normal basis of $L_{2}(\Delta_{\alpha})$.
\par
A Hilbert-Schmidt kernel on $X$ is an $L_{2}$-function on $X \times X$. It is given by an $L_{2}$-convergent series
\begin{equation}
 K \;=\; \sum_{\alpha \beta, ij} K_{ij}^{\alpha \beta} \; (e_{\alpha}^{i} \times \bar{e}^{j}_{\beta} )
\end{equation}
with real/complex coefficients $K_{ij}^{\alpha \beta}$. Given the Hilbert-Schmidt kernel $K$, the decomposition (1) is unique.
\par
A Hilbert-Schmidt kernel of type $(e_{\alpha}^{i} \times \bar{e}^{j}_{\beta}) $ will be called {\em elementary}.
\par
Any Hilbert-Schmidt kernel $K$ defines a bounded Hilbert-Schmidt operator $\mathit{Op}(K): L_{2}(X) \rightarrow L_{2}(X) $
\begin{equation}
(\mathit{Op}(K)\phi) (x) := \int_{X} K(x,y) \; \phi (y) \; d\mu(y).
\end{equation}
\par
The composition of two elementary Hilbert-Schmidt operators is given by
\begin{equation}                  
\mathit{Op}K_{1} \circ \mathit{Op}K_{2} = \mathit{Op} K,
\end{equation}
where
\begin{equation}                  
         K (x, z) = \int_{X } K_{1} (x, y) . K_{2} (y, z) d \mu(y).
\end{equation} 
This kernel $K$ is by definition the composition of the kernels $K_{1}$, $K_{2}$, written $K = K_{1} \circ K_{2} $.
In other words,
\begin{equation}
\mathit{Op} ( K_{1} \circ K_{2}  ) =   \mathit{Op} ( K_{1} ) \circ  \mathit{Op} ( K_{2} ). 
\end{equation}
In particular, 

\begin{equation}                  
(e_{\alpha}^{i} \times \bar{e}^{j}_{\beta}) \circ (e_{\gamma}^{k} \times \bar{e}^{l}_{\eta} ) \;=\;  
             \delta^{jk}  \delta_{\beta \gamma} \; (e_{\alpha}^{i} \times \bar{e}^{l}_{\eta}),
\end{equation}
where $\delta^{jk}$ and $\delta_{\beta \gamma}$ are the Kronecker symbols.
\par
The Hilbert-Schmidt operators form an associative algebra denoted $\mathit{HS} (X)$.
\par
The algebra of Hilbert-Schmidt operators is the Schatten class $\mathit{L}^{2}$ of compact operators on the separable Hilbert space $H = L_{2}(X)$.


\section{Hochschild and {\em Local} Hochschild Homology of Hilbert-Schmidt Operators.}  
We recall the basic definitions regarding the Hochschild homology of associative algebras $\mathit{A}$.  
In this paper we compute the {\em local} Hochschild homology of the algebra $\mathit{A} = \mathit{HS} (X)$. 
{\em Local} Hochschild homology was defined by Teleman \cite{Teleman_arXiv}. It is the analogue of the Alexander-Spanier construction implanted into the Hochschild complex. This is done by considering only Hochschild chains which have small support about the main diagonal of the powers of the space $X$. 
\par
In the \S 5-7 of the paper we introduce certain algebraic constructions within the Hochschild complex of this algebra, ignoring the supports of the chains. It is important to stress here that all algebraic manipulations we introduce are
 {\em local} and therefore, they are well defined in the {\em local} Hochschild complex. These will enable us, at the end of this paper, in \S 10,  to complete the computation of  the {\em local}  Hochschild homology of the algebra of Hilbert-Schmidt operators and to connect it naturally with the Alexander-Spanier homology.
\par
The vector space of Hochschild $p$-chains of the algebra $\mathit{HS} (X)$ with values in itself is by definition
\begin{equation}    
C_{p} (\mathit{HS} (X)) \;=\; \otimes_{C}^{p+1} \mathit{HS} (X).
\end{equation}
\par
The Hochschild boundary $b_{(p)}: C_{p} (\mathit{HS} (X)) \longrightarrow C_{p-1} (\mathit{HS} (X))$ is 
\begin{equation} 
b_{(p)} = \sum_{k=0}^{k=p-1}  b_{(p)k} +  b_{(p)p}     
\end{equation}
where
\begin{equation}
b_{(p)k} = (-1)^{k} \partial_{k}^{H} ,    \hspace{0.2cm}  and  \hspace{0.3cm} b_{(p)p} = (-1)^{p} \partial_{p}^{H}    
\end{equation}
with
\begin{equation}
\partial_{(p)k}^{H}  (K_{0} \otimes_{C}  K_{1}  \otimes_{C} .... \otimes_{C} K_{p}  )  \;=\;           
        K_{0} \otimes_{C} ... \otimes_{C} K_{k-1} \otimes_{C} (K_{k} \circ  K_{k+1}) \otimes_{C} ... \otimes_{C} K_{p} 
\end{equation}
and
\begin{equation}     
\partial_{(p)p} ^{H} (K_{0} \otimes_{C}  K_{1}  \otimes_{C} .... \otimes_{C} K_{p}  )  \;=\;     
         ( K_{p} \circ K_{0} ) \otimes_{C}  K_{1}  \otimes_{C} .... \otimes_{C} K_{p-1} .
\end{equation}
The operator $b_{(p)k}^{H}$ is called {\em  Hochschild face operator of order} $k$.
When no confusion occurs, the index $(p)$, indicating the degree of chains, could be omitted.
\par

\begin{definition}   
Let $K  = K_{0} \otimes_{C}  K_{1}  \otimes_{C} .... \otimes_{C} K_{p} $ where each of the factors $K_{i}$ is elementary.
Then $K$ is called {\em elementary chain}.
\par
When the ground algebra $\mathit{A}$ is a locally convex topological algebra, it is customary to replace the algebraic tensor products by projective tensor products, see Connes \cite{Connes}. The homology of the corresponding completed complex is called {\em continuous} Hochschild homology of the algebra $\mathit{A}$. In this paper we consider the completion $\bar{C}_{p}(\mathit{HS})$ defined below.
\end{definition}
\begin{definition}
Let $\bar{C}_{p}(\mathit{HS})$ denote the space of $L_{2}$-functions on $(X \times X)^{p+1}$. 
\par
\end{definition}
\par
All considerations made in the sequel refer to the computation of the homology of the completed complex 
$ \{  \bar{C}_{\ast}(\mathit{HS}), b \}_{\ast}$.
\par
The elementary chains form an ortho-normal basis of the Hilbert space  $\bar{C}_{p}(\mathit{HS})$.
To simplify the notation we agree to denote  $\bar{C}_{p}(\mathit{HS})$ by $C_{p}(\mathit{HS})$.

\section{Algebraic Constructions}    

\begin{definition}    
Let $K$ be an elementary $p$-chain and let $k$ any index $0 \leq k \leq p$. We say that $K$ has a $k$-{\em gap} provided 
$\partial_{k}^{H} K = 0$.
\end{definition}
\begin{remark}   
The elementary chain $K$ has a $k$-gap (for $k = p$ we intend $p+1$ to be $0$) provided the consecutive elementary factors 
\begin{equation}
(e^{i_{k}}_{\alpha_{k}}  \times \bar{e}^{j_{k}}_{\beta_{k}} )
                                            \otimes_{C} 
(e^{i_{k+1}}_{\alpha_{k+1}}  \times \bar{e}^{j_{k+1}}_{\beta_{k+1}} )
\end{equation}
either have different supports   $\beta_{k} \neq \alpha_{k+1}$, or they involve different elements of the Hilbert space basis, $e^{j_{k}} \neq e^{i_{k+1}}$, or both.
\end{remark} 
\begin{definition}   
\par
i) Let $C^{0}_{p}(\mathit{HS} (X))  \subset \bar{C}_{p}(\mathit{HS} (X)) $ be the vector subspace generated by those elementary chains $K$ which contain at least one gap.  
\par
By definition,   
\begin{equation}     
C^{0}_{0}(\mathit{HS} (X))    = \sum_{
(i_{0}, \alpha_{0}) \neq (j_{0}, \beta_{0}) 
}  \;
K_{i_{0}j_{0}}^{\alpha_{0} \beta_{0} }  \;
e^{i_{0}}_{\alpha_{0}} \times \bar{e}^{j_{0}}_{\beta_{0}}
\end{equation}    
\par
ii)  $\{ C^{0}_{p}(\mathit{HS} (X)), \; b\}_{0 \leq p}$  is a sub-complex of the Hochschild complex.
\par
iii) Let $C^{\Delta}_{p}(\mathit{HS} (X))  \subset \bar{C}_{p}(\mathit{HS}_{0} (X)) $ be the vector subspace generated by those elementary chains $K$ which posses no gap.
\par
Any chain belonging to  $C^{\Delta}_{p}(\mathit{HS} (X)) $ is the sum of an $L_{2}$-convergent series of elementary chains 
\begin{equation}     
 K \;=\; \sum_{\alpha_{k}, i_{k}} K_{i_{0}, ..., i_{p}}^{\alpha_{i_{0}, ..., i_{p}}} \; (e_{\alpha_{0}}^{i_{0}} \times \bar{e}^{i_{1}}_{\alpha_{1}} )
 \otimes_{C} 
 (e_{\alpha_{1}}^{i_{1}} \times \bar{e}^{i_{2}}_{\alpha_{2}} )  \otimes_{C}  ... \otimes_{C}  (e_{\alpha_{p}}^{i_{p}} \times \bar{e}^{i_{0}}_{\alpha_{0}} ), 
 \end{equation}      
 with
 \begin{equation}
 \sum_{\alpha_{k}, i_{k}} | K_{i_{0}, ..., i_{p}}^{\alpha_{i_{0}, ..., i_{p}}} |^{2} < \infty.
 \end{equation}
\par
By definition,   
\begin{equation}          
C^{\Delta}_{0}(\mathit{HS} (X))     
 = \sum_{
(i_{0}, \alpha_{0}) 
}  \;
K_{i_{0}i_{0}}^{\alpha_{0} \alpha_{0} }  \;
e^{i_{0}}_{\alpha_{0}} \times \bar{e}^{i_{0}}_{\alpha_{0}}.
\end{equation}        
iv) $C^{\Delta}_{p}(\mathit{HS} (X)), \; b\}_{0 \leq p}$ is a complex. This complex is called {\em diagonal complex.} 
\end{definition}
Concerning ii), we observe that if a Hochschild boundary face $\partial_{(p)k}^{H}$ acts on a gap, the result is the zero chain; if the Hochschild boundary does not involve the gap, the gap survives, which shows that
  $C^{0}_{p}(\mathit{HS} (X)), \; b\}_{0 \leq p}$ is indeed a homology complex.
\par
Concerning iv), formula (6) shows that all Hochschild boundary faces keep the structure of the chains (14) unaltered, which shows that
 $C^{\Delta}_{p}(\mathit{HS} (X)), \; b\}_{0 \leq p}$ is a homology complex too.
\par
\begin{proposition}    
The Hochschild complex $\{  \bar{C}_{\ast} (\mathit{HS} (X)), \; b  \}_{\ast}$ decomposes into a direct sum of Hochschild sub-complexes 
\begin{equation}
\{ \bar{C}_{\ast} (\mathit{HS} (X)), \; b  \}_{\ast} = 
 \{  C^{0}_{\ast}(\mathit{HS} (X)), \; b  \}_{\ast}  \oplus    \{  C^{\Delta}_{\ast}(\mathit{HS} (X)), \; b  \}_{\ast}
\end{equation}
\end{proposition}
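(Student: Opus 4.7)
The plan is to exhibit the decomposition at the level of the orthonormal basis of elementary chains and then verify that each Hochschild face operator preserves both summands separately.

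First, I would argue the vector space (Hilbert space) decomposition. By Definition 4 and Remark 5, every elementary chain $K=K_{0}\otimes_C\cdots\otimes_C K_p$ either has at least one $k$-gap (i.e.\ some $\partial_k^H K=0$) or it has no gap at all; these two possibilities are mutually exclusive and exhaust all elementary chains. Since by the final paragraph of Definition 3 the elementary chains form an orthonormal basis of $\bar{C}_p(\mathit{HS}(X))$, partitioning this basis into the two classes yields an orthogonal direct sum decomposition
\[
\bar{C}_p(\mathit{HS}(X)) \;=\; C^{0}_p(\mathit{HS}(X)) \;\oplus\; C^{\Delta}_p(\mathit{HS}(X))
\]
as Hilbert spaces, degree by degree.

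Next, I would verify that the Hochschild boundary $b$ preserves each summand, so that the two subspaces form actual sub-complexes and the decomposition is one of chain complexes. For the diagonal summand, the key observation is formula (6): composing two consecutive elementary factors that share a matching Hilbert-basis label and a matching support simplex again gives an elementary factor of the same diagonal type, and the resulting $(p-1)$-chain still has matching labels/supports across every consecutive pair (including the cyclic one, handled by $\partial^H_p$). Thus $b(C^{\Delta}_p)\subseteq C^{\Delta}_{p-1}$. For the gap summand, fix an elementary chain $K$ with a $k$-gap and consider $\partial^H_j K$ for any $j$. If $j=k$, then $\partial^H_j K=0$ by the very definition of a $k$-gap. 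If $j\neq k$, the face operator combines two factors different from the gap pair (or leaves the cyclic pair alone), so the gap pair at position $k$ survives unchanged, possibly with its index shifted; in every case the resulting chain retains a gap and therefore lies in $C^{0}_{p-1}$. Summing over $j$ with the usual signs gives $b(C^{0}_p)\subseteq C^{0}_{p-1}$.

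The combination of these two facts proves the proposition. The main obstacle, if any, is purely bookkeeping in the gap case: one must carefully check the index shifts of the surviving gap when $j<k$ versus $j>k$, as well as the cyclic face $\partial^H_p$, to confirm that the gap is never accidentally destroyed by a neighbouring composition. Once these cases are enumerated, the decomposition follows immediately from the orthonormal-basis partition together with the boundary preservation.
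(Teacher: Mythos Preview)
Your proposal is correct and follows essentially the same approach as the paper: the paper's own proof is simply ``Obvious,'' with the supporting observations (that $b$ preserves each summand) already spelled out in the remarks immediately after Definition~6, which you have reproduced in more detail. One minor imprecision in your gap case is the phrase ``combines two factors different from the gap pair'': when $j=k-1$ or $j=k+1$ one of the composed factors \emph{is} part of the gap pair, but associativity of $\circ$ still forces the new adjacent composition to vanish, so the gap survives as you claim.
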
    
\begin{proof}  
Obvious.
\end{proof}    
\subsection{Homotopy Operator $s$. The splitting}   
\par
\begin{proposition}      
i) The complex $\{  C^{0}_{\ast}(\mathit{HS} (X)), \; b  \}_{\ast} $  is acyclic.  
\par
ii) The inclusion of the diagonal sub-complex  $ \{  C^{\Delta}_{\ast}(\mathit{HS} (X)), \; b  \}_{\ast}$   in the complex  
$\{ C_{\ast} (\mathit{HS} (X)), \; b  \}_{\ast} $ induces isomorphism in homology.  
\par
iii) Therefore, the homology of the diagonal sub-complex $ \{  C^{\Delta}_{\ast}(\mathit{HS} (X)), \; b  \}_{\ast}$ 
is the continuous Hochschild homology of the algebra of Hilbert-Schmidt operators.
\end{proposition}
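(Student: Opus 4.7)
The plan is to prove part (i) by constructing an explicit contracting chain homotopy $s$ on $C^{0}_{\ast}(\mathit{HS}(X))$, and then to deduce (ii) and (iii) formally from the direct sum decomposition of Proposition 7.

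To define $s$, take an elementary chain $K = K_{0} \otimes_{C} \cdots \otimes_{C} K_{p}$ in $C^{0}_{p}$ and let $k = k(K)$ be the smallest cyclic index $0 \leq k \leq p$ for which $\partial_{k}^{H} K = 0$ (such $k$ exists by definition of $C^{0}_{\ast}$, with the convention of Remark 5 when $k = p$). Writing $K_{k} = e^{i_{k}}_{\alpha_{k}} \times \bar{e}^{j_{k}}_{\beta_{k}}$, introduce the rank-one elementary kernel $L := e^{j_{k}}_{\beta_{k}} \times \bar{e}^{j_{k}}_{\beta_{k}}$, and set
\[
  s(K) \;:=\; (-1)^{k}\; K_{0} \otimes_{C} \cdots \otimes_{C} K_{k} \otimes_{C} L \otimes_{C} K_{k+1} \otimes_{C} \cdots \otimes_{C} K_{p}.
\]
The composition rule (6) yields $K_{k} \circ L = K_{k}$ while $L \circ K_{k+1} = 0$ (the latter is precisely the $k$-gap of $K$), so $s(K)$ belongs to $C^{0}_{p+1}$ with leading gap at position $k+1$. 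Since $L$ has unit $L_{2}$-norm, $s$ is bounded (of norm at most one) and extends by $L^{2}$-continuity to the completed sub-complex; note that $L$ is supported on the diagonal of $\beta_{k} \times \beta_{k}$, so the insertion is local in the sense needed in Section 10.

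The main step is to verify the homotopy identity $b s + s b = \mathrm{id}$ on $C^{0}_{\ast}$. Among the $p+2$ faces of $s(K)$, the one at position $\ell = k$ contracts $K_{k} \circ L = K_{k}$ and, carrying the total sign $(-1)^{k} \cdot (-1)^{k} = +1$, reproduces $K$; the face at $\ell = k+1$ vanishes by the engineered relation $L \circ K_{k+1} = 0$. For $\ell < k$, the face of $s(K)$ composes $K_{\ell}$ with $K_{\ell+1}$ (non-zero by minimality of $k$) and preserves the insertion; it pairs with the $\ell$-th face in $sb(K)$, where the leading gap of $\partial_{\ell}^{H} K$ has shifted by re-indexing to new position $k-1$, and the two contributions cancel because the insertion signs $(-1)^{k}$ and $(-1)^{k-1}$ are opposite. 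For $\ell \geq k+2$, the face of $s(K)$ composes $K_{\ell-1}$ with $K_{\ell}$ and leaves $L$ in place; it pairs with the $(\ell-1)$-th face of $sb(K)$, where $\partial_{\ell-1}^{H} K$ still has leading gap at position $k$ (the condition $K_{k} \circ (K_{k+1} K_{k+2}) = 0$ is equivalent to the original $k$-gap), so both insertion signs are $(-1)^{k}$ but the external signs $(-1)^{\ell}$ and $(-1)^{\ell-1}$ differ. The term $m = k$ in $sb(K)$ is zero by the defining gap, and the cyclic face $\ell = p+1$ is handled identically using Remark 5. All pairs cancel, establishing (i).

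Given (i), parts (ii) and (iii) follow formally: Proposition 7 gives a splitting $\bar{C}_{\ast}(\mathit{HS}) = C^{0}_{\ast} \oplus C^{\Delta}_{\ast}$ of complexes, so $H_{\ast}(\bar{C}_{\ast}) \cong H_{\ast}(C^{0}_{\ast}) \oplus H_{\ast}(C^{\Delta}_{\ast})$; by (i) the first summand vanishes and the inclusion of the second induces the asserted isomorphism, which by Section 5 is the continuous Hochschild homology of $\mathit{HS}(X)$. The main obstacle is the bookkeeping in the homotopy identity: one must verify that for $\ell > k$ the leading cyclic gap of $\partial_{\ell}^{H} K$ genuinely remains at position $k$ (that no earlier position accidentally becomes a gap through the composition $K_{\ell-1} \circ K_{\ell}$), and symmetrically for $\ell < k$, together with consistent sign tracking including the cyclic face. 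Should such a jump occur on some chains, $s$ would have to be refined to an alternating sum over gap positions, but the single-insertion prescription above is robust because the multiplication rule (6) only modifies the gap pattern precisely at the site of the face being taken.
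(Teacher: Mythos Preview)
Your approach is essentially identical to the paper's own proof: the paper defines exactly the same contracting homotopy $s$ (inserting the idempotent $e^{j_{r}}_{\beta_{r}}\times\bar e^{j_{r}}_{\beta_{r}}$ immediately after the position $r$ of the \emph{first} gap, with sign $(-1)^{r}$), verifies $bs+sb=\mathrm{id}$ on $C^{0}_{\ast}$ by the same case analysis (faces before the gap, the two faces adjacent to the insertion, faces after, plus the cyclic face and the separate case where the first gap is the $p$-gap), and then deduces (ii) and (iii) from Proposition~7 just as you do. Your treatment is in fact a bit more explicit than the paper's in tracking why the leading gap of $\partial^{H}_{\ell}K$ does not jump (the paper simply says ``a direct check shows'' the relevant terms cancel), and your remark on the norm bound $\|sK\|=\|K\|$ anticipates what the paper postpones to Proposition~33 in \S7.3.
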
   
\par
In view of Proposition 8, only part i) needs to be proven.
\par
The proof of i) is based on a homotopy operator $s := \{ s_{(p)} \}$,    
\begin{equation}      
s_{(p)}:   C^{0}_{p}(\mathit{HS} (X)) \longrightarrow  C^{0}_{p+1}(\mathit{HS} (X))
\end{equation}     
defined below. 
\par
The construction of the homotopy operator $s_{(p)}$ keeps track of the {\em first} gap present in each elementary monomial. 
For, suppose $K$ is an elementary $p$-chain which possesses an $r$-gap, with $r$ minimal, $0 \leq r \leq p$ 
\begin{equation}      
K =  (e^{i_{0}}_{\alpha_{0}}  \times \bar{e}^{i_{1}}_{\alpha_{1}} )  
                    \otimes_{\mathit{C}}                 
(e^{i_{1}}_{\alpha_{1}}  \times \bar{e}^{i_{2}}_{\alpha_{2}} )
       \otimes_{\mathit{C}}     .....    \otimes_{\mathit{C}}  
 (e^{i_{r}}_{\alpha_{r}}  \times \bar{e}^{j_{r}}_{\beta_{r}} )
                   \otimes_{\mathit{C}}  
(e^{i_{r+1}}_{\alpha_{r+1}}  \times \bar{e}^{j_{r+1}}_{\beta_{r+1}} ) 
                            \otimes_{\mathit{C}}  ...   \otimes_{\mathit{C}}                            
 (e^{i_{p}}_{\alpha_{p}}  \times \bar{e}^{j_{p}}_{\beta_{p}} ).
\end{equation}   
The operator $s_{(p)} (K)$ is defined by inserting the factor $ (e^{j_{r}}_{\beta_{r}}  \times \bar{e}^{j_{r}}_{\beta_{r}} )$
\begin{equation*}     
s_{(p)} (K) :=     
\end{equation*}
\begin{equation}
= (-1)^{r}
(e^{i_{0}}_{\alpha_{0}}  \times \bar{e}^{j_{0}}_{\beta_{0}} )  \otimes_{\mathit{C}} ... 
\otimes_{\mathit{C}}  (e^{i_{r}}_{\alpha_{r}}  \times \bar{e}^{j_{r}}_{\beta_{r}} ) 
\otimes_{\mathit{C}}  (e^{j_{r}}_{\beta_{r}}  \times \bar{e}^{j_{r}}_{\beta_{r}} )
\otimes_{\mathit{C}}  (e^{i_{r+1}}_{\alpha_{r+1}}  \times \bar{e}^{j_{r+1}}_{\beta_{r+1}} ) \otimes_{C}... 
\otimes_{\mathit{C}}  (e^{i_{p}}_{\alpha_{p}}  \times \bar{e}^{j_{p}}_{\beta_{p}} ).
\end{equation}
If $K$ is an elementary chain of degree $0$, then the homotopy  $s_{(0)}$ is defined by
\begin{equation}      
s_{(0)} (e^{i_{0}}_{\alpha_{0}} \times \bar{e}^{j_{0}}_{\beta_{0}}) := (e^{i_{0}}_{\alpha_{0}} \times \bar{e}^{j_{0}}_{\beta_{0}}) 
\otimes_{\mathit{C}}
(e^{j_{0}}_{\beta_{0}} \times \bar{e}^{j_{0}}_{\beta_{0}}). 
\end{equation}
\par
We postpone to the \S 7.3. the checking that the operators $s_{(p)}$ are well defined on the spaces $C_{p}^{0}(HS(X))$.

\begin{lemma}        
The operators $s$ satisfy 
\begin{equation}
(\;b \; s_{(p)}  + s_{(p-1)}\; b \;) (K) = K
\end{equation}
on $C^{0}_{\ast}(HS)$.
\end{lemma}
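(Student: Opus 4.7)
The plan is to verify the identity on an elementary chain $K = K_0 \otimes_{C} \cdots \otimes_{C} K_p$ in $C^{0}_{p}(HS)$, since both $s$ and $b$ are continuous and linear. Let $r$, $0 \leq r \leq p$, be the position of the first gap of $K$; then the factors $K_0,\ldots,K_r$ necessarily have the diagonal form of (14), matching (19), and (20)--(21) give $s(K)$ explicitly. Write $L' := (-1)^{r}\,s(K)$ for the underlying unsigned $(p+1)$-chain.

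I first analyze the gap structure of $L'$ using the composition rule (6). On its left, the inserted factor $(e^{j_r}_{\beta_r} \times \bar{e}^{j_r}_{\beta_r})$ composes with $K_r$ and reproduces $K_r$, so position $r$ of $L'$ carries no gap. On its right, composing the inserted factor with $K_{r+1}$ reproduces \emph{exactly} the gap condition of $K$ at position $r$, so $L'$ does have a gap at position $r+1$. All positions $<r$ and $>r+1$ inherit their gap status from $K$, so the first gap of $s(K)$ sits at position $r+1$ (or, in the cyclic case $r=p$, at position $p+1$).

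Next I compute $\partial_k^{H}L'$ case by case. The two special indices are $\partial_r^{H}L' = K$, by the composition rule above, and $\partial_{r+1}^{H}L' = 0$, by the very same gap condition that produced the gap in $K$. Together with the signs $(-1)^{r}$ from $s$ and $(-1)^{k}$ from $b$, the index $k=r$ contributes precisely $+K$ to $bs(K)$, and $k=r+1$ contributes nothing. For the remaining indices I match $\partial_k^{H}L'$ with $s(\partial_{k'}^{H}K)$, taking $k'=k$ for $k<r$ and $k'=k-1$ for $k\geq r+2$. The first gap of $\partial_{k'}^{H}K$ sits at position $r-1$ when $k'<r$ (shortening before the gap shifts it down by one) and at position $r$ when $k'>r$ (shortening after the gap leaves it fixed); applying $s$ and comparing term by term, the resulting chains coincide with $\partial_k^{H}L'$, while the overall signs differ by exactly $-1$, giving the cancellation. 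The diagonal index $k'=r$ in $sb(K)$ is harmless since $\partial_r^{H}K=0$ by the definition of a gap.

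A bit of extra care is required only for the cyclic case $r=p$ and the base case $p=0$. When $r=p$, $s(K)$ appends the inserted factor after $K_p$ and the first gap of $s(K)$ is cyclic (at $p+1$); the same argument gives $\partial_p^{H}L' = K$ and $\partial_{p+1}^{H}L' = 0$, the non-cyclic faces of $L'$ cancel against $s(\partial_k^{H}K)$ for $k<p$ as above, and the cyclic face $(-1)^{p}s(\partial_p^{H}K)$ vanishes identically because $\partial_p^{H}K = 0$. For $p=0$ the complex gives $b = 0$ on $C_0$, so (22) reduces to $bs_{(0)}(K) = K$, which is immediate from (21) and (6). The only genuine obstacle in the whole argument is the sign accounting: the sign $(-1)^{r'}$ that $s$ attaches to its output depends on where the first gap of its input lies, and that position shifts by $-1$, stays fixed, or is itself the cyclic position depending on where the boundary face acts; the content of the lemma is essentially the observation that this shift absorbs the $(-1)^{k}$ in $b$ in the right way to leave only the single surviving copy of $K$.
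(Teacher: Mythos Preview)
Your proof is correct and follows essentially the same approach as the paper: both compute $bs_{(p)}(K)$ and $s_{(p-1)}b(K)$ by splitting over face indices, identify the two distinguished terms $b_{(p+1)r}s_{(p)}K = K$ and $b_{(p+1)r+1}s_{(p)}K = 0$, and then check that the remaining face terms cancel in pairs (with the special cases $r=p$ and $p=0$ handled separately). Your version is somewhat more explicit than the paper's in tracking how the first-gap position shifts under each face map and why this produces the needed sign discrepancy, but the structure of the argument is the same.
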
   
\begin{proof}   
One has
\[
b \;s_{(p)} (K) = \sum_{k=0}^{k=r-1} b_{(p+1)k} (s_{(p)}K) +  b_{(p+1)r} (s_{(p)}K)  + b_{(p+1)r+1} (s_{(p)}K) + \sum_{k=r+2}^{k=p+1} b_{(p+1)k} (s_{(p)}K) =
\]
\[
 = \sum_{k=0}^{k=r-1} b_{(p+1)k} (s_{(p)}K) +  K + 0 + \sum_{k=r+2}^{k=p+1} b_{(p+1)k} (s_{(p)}K).
\]
Therefore
\begin{equation}               
b \; s_{(p)} (K) = \sum_{k=0}^{k=r-1} b_{(p+1)k} (s_{(p)}K) +  K + \sum_{k=r+2}^{k=p+1} b_{(p+1)k} (s_{(p)}K).
\end{equation}
On the other hand, we split the boundary $bK$ in three parts and we apply the operator $s_{(p-1)}$
\begin{equation*}        
s_{(p-1)}\; b(K) = \sum_{k=0}^{k=r-1}  s_{(p-1)}b_{(p)k} K  +     s_{(p-1)} b_{(p)r} K  +    \sum_{k=r+1}^{k=p}  s_{(p-1)} b_{(p)k} K =
\end{equation*}
\begin{equation}         
        = \sum_{k=0}^{k=r-1}  s_{(p-1)}b_{(p)k} K    +    \sum_{k=r+1}^{k=p}  s_{(p-1)} b_{(p)k} K.
\end{equation}
A direct check shows that both the first and the last terms from equations (23) and (24) cancel out, respectively.

Here, in particular, we intend to check the formula (22) when the first gap of $K_{p}$ is a $p$-gap 
\begin{equation}
K =  (e^{i_{0}}_{\alpha_{0}}  \times \bar{e}^{i_{1}}_{\alpha_{1}} )  
                    \otimes_{\mathit{C}}                 
(e^{i_{1}}_{\alpha_{1}}  \times \bar{e}^{i_{2}}_{\alpha_{2}} )
       \otimes_{\mathit{C}}     .....    \otimes_{\mathit{C}}  
 (e^{i_{p-1}}_{\alpha_{p-1}}  \times \bar{e}^{i_{p}}_{\alpha_{p}} ) 
                   \otimes_{\mathit{C}}  
(e^{i_{p}}_{\alpha_{p}}  \times \bar{e}^{j_{p}}_{\beta_{p}} )
\end{equation}
with $      (\beta_{p}, j_{p}) \neq    (\alpha_{0}, i_{0})$.
Then
\begin{multline}
b s_{(p)} (K) =  \\.
= (-1)^{p} \{ \sum_{k=0}^{k=p-1} b_{(p+1)k} 
(e^{i_{0}}_{\alpha_{0}}  \times \bar{e}^{i_{1}}_{\alpha_{1}} )  
                    \otimes_{\mathit{C}}                 
(e^{i_{1}}_{\alpha_{1}}  \times \bar{e}^{i_{2}}_{\alpha_{2}} )
       \otimes_{\mathit{C}}     .....    \otimes_{\mathit{C}}  
 (e^{i_{p-1}}_{\alpha_{p-1}}  \times \bar{e}^{i_{p}}_{\alpha_{p}} ) 
                   \otimes_{\mathit{C}}  
(e^{i_{p}}_{\alpha_{p}}  \times \bar{e}^{j_{p}}_{\beta_{p}} )  \}
                    \otimes_{\mathit{C}}  
        (e^{j_{p}}_{\beta_{p}}  \times \bar{e}^{j_{p}}_{\beta_{p}} ) + \\
 + (-1)^{p} (-1)^{p}  \partial_{(p+1)p}     
\{ (e^{i_{0}}_{\alpha_{0}}  \times \bar{e}^{i_{1}}_{\alpha_{1}} )  
                    \otimes_{\mathit{C}}                 
(e^{i_{1}}_{\alpha_{1}}  \times \bar{e}^{i_{2}}_{\alpha_{2}} )
       \otimes_{\mathit{C}}     .....    \otimes_{\mathit{C}}  
 (e^{i_{p-1}}_{\alpha_{p-1}}  \times \bar{e}^{i_{p}}_{\alpha_{p}} ) 
                   \otimes_{\mathit{C}}  
(e^{i_{p}}_{\alpha_{p}}  \times \bar{e}^{j_{p}}_{\beta_{p}} )  
                    \otimes_{\mathit{C}}  
        (e^{j_{p}}_{\beta_{p}}  \times \bar{e}^{j_{p}}_{\beta_{p}} ) \} + \\
 + (-1)^{p+1} (-1)^{p}  \partial_{(p+1)p+1}     
\{ (e^{i_{0}}_{\alpha_{0}}  \times \bar{e}^{i_{1}}_{\alpha_{1}} )  
                    \otimes_{\mathit{C}}                 
(e^{i_{1}}_{\alpha_{1}}  \times \bar{e}^{i_{2}}_{\alpha_{2}} )
       \otimes_{\mathit{C}}     .....    \otimes_{\mathit{C}}  
 (e^{i_{p-1}}_{\alpha_{p-1}}  \times \bar{e}^{i_{p}}_{\alpha_{p}} ) 
                   \otimes_{\mathit{C}}  
(e^{i_{p}}_{\alpha_{p}}  \times \bar{e}^{j_{p}}_{\beta_{p}} )  
                    \otimes_{\mathit{C}}  
(e^{j_{p}}_{\beta_{p}}  \times \bar{e}^{j_{p}}_{\beta_{p}} ) \} = \\   
=  (-1)^{p}  \sum_{k=0}^{k=p-1} b_{(p+1)k} 
(e^{i_{0}}_{\alpha_{0}}  \times \bar{e}^{i_{1}}_{\alpha_{1}} )  
                    \otimes_{\mathit{C}}                 
(e^{i_{1}}_{\alpha_{1}}  \times \bar{e}^{i_{2}}_{\alpha_{2}} )
       \otimes_{\mathit{C}}     .....    \otimes_{\mathit{C}}  
 (e^{i_{p-1}}_{\alpha_{p-1}}  \times \bar{e}^{i_{p}}_{\alpha_{p}} ) 
                   \otimes_{\mathit{C}}  
(e^{i_{p}}_{\alpha_{p}}  \times \bar{e}^{j_{p}}_{\beta_{p}} )  \}
                    \otimes_{\mathit{C}}  
        (e^{j_{p}}_{\beta_{p}}  \times \bar{e}^{j_{p}}_{\beta_{p}} ) + K.
\end{multline}
On the other hand,
\begin{equation}
s_{(p-1)} b (K) = s_{(p-1)} \{ \sum_{k=0}^{k=p-1} b_{(p)k} (K) + b_{(p)p} (K) \} = 
(-1)^{p-1}  \sum_{k=0}^{k=p-1} b_{(p)k} (K) \otimes_{\mathit{C}}  (e^{j_{p}}_{\beta_{p}}  \times \bar{e}^{j_{p}}_{\beta_{p}} ). 
\end{equation} 
Summing up formulas (26) and (27) shows that the identity (22) holds also for $p$-elementary chains whose first gap is a $p$-gap.
\par
Finally, we verify the homotopy formula (22) for chains of degree zero. For such chains, one has
\begin{equation}
(bs_{(0)} + s b) (e^{i_{0}}_{\alpha_{0}}  \times \bar{e}^{j_{0}}_{\beta_{0}} ) = b  s_{(0)} (e^{i_{0}}_{\alpha_{0}}  \times \bar{e}^{j_{0}}_{\beta_{0}} ) =
b(   \;   (e^{i_{0}}_{\alpha_{0}}  \times \bar{e}^{j_{0}}_{\beta_{0}} )         \otimes_{\mathit{C}}
 (e^{j_{0}}_{\beta_{0}}  \times \bar{e}^{j_{0}}_{\beta_{0}} )  \;)  =
 \end{equation}
 \begin{equation*}
= (e^{i_{0}}_{\alpha_{0}}  \times \bar{e}^{j_{0}}_{\beta_{0}} )   - 
  (e^{j_{0}}_{\beta_{0}}  \times \bar{e}^{j_{0}}_{\beta_{0}} )  \;) \circ  (e^{i_{0}}_{\alpha_{0}}  \times \bar{e}^{j_{0}}_{\beta_{0}} ) =  
 \end{equation*} 
 \begin{equation*}
=  (e^{i_{0}}_{\alpha_{0}}  \times \bar{e}^{j_{0}}_{\beta_{0}} )   - 
  \delta^{j_{0}i_{0}}    \delta_{\beta_{0} \alpha_{0}}
  ( e^{j_{0}} _{\alpha_{0}} \times \bar{e}^{j_{0}}_{\beta_{0}} ) =
    (e^{i_{0}}_{\alpha_{0}}  \times \bar{e} ^{j_{0}}_{\beta_{0}} )  
\end{equation*}
because $\delta^{j_{0}i_{0}}    \delta_{\beta_{0} \alpha_{0}} = 0$, which proves the desired formula.
 \par
These computations complete the proof of Lemma 9. This completes the proof of Proposition 8. -i).
\end{proof}    
Part -ii) of Proposition 8 follows from part -i) along with Proposition 7. This completes the proof of Proposition 8.

\subsection{Homotopy Operator  $S$}   
For each  $n$-simplex $\Delta_{\alpha}$ we choose and fix an element $e^{ I_{\alpha} }$ of the corresponding ortho-normal Hilbert basis. To simplify the notation we denote $e^{ I_{\alpha} }$  by $ I $. 
\begin{definition}   
Let $S_{(p)}:    C^{\Delta}_{p}(\mathit{HS} (X)) \longrightarrow C^{\Delta}_{p+1}(\mathit{HS} (X)) $ be defined on elementary chains by the formula
\begin{multline}    
S_{(p)} 
[(e_{\alpha_{0}}^{i_{0}} \times \bar{e}^{i_{1}}_{\alpha_{1}} )
 \otimes_{C} 
 (e_{\alpha_{1}}^{i_{1}} \times \bar{e}^{i_{2}}_{\alpha_{2}} )  
 \otimes_{C}  ... \otimes_{C}  
 (e_{\alpha_{p}}^{i_{p}} \times \bar{e}^{i_{0}}_{\alpha_{0}} )] := \\
:=  (e_{\alpha_{0}}^{i_{0}} \times \bar{e}^{I}_{\alpha_{0}} )   
  \otimes_{C} 
  (e_{\alpha_{0}}^{I} \times \bar{e}^{i_{1}}_{\alpha_{1}} )
 \otimes_{C} 
 (e_{\alpha_{1}}^{i_{1}} \times \bar{e}^{i_{2}}_{\alpha_{2}} )  \otimes_{C}  ... \otimes_{C}  (e_{\alpha_{p}}^{i_{p}} \times \bar{e}^{i_{0}}_{\alpha_{0}} ), 
\end{multline}
i.e. $S$ is defined by inserting the factor $\bar{e}^{I}_{\alpha_{0}} \otimes_{C} e^{I}_{\alpha_{0}}  $  into the expression of $K$.
\end{definition}   

\subsubsection{The Homology of the sub-complex $\{  C^{\Delta}_{p}(\mathit{HS} (X)), b' \}$. } 
\begin{proposition}    
The operator $S$ satisfies
\begin{equation}    
b' \; S + S \; b' = Id.
\end{equation}   
\end{proposition}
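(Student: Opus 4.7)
The plan is to prove the identity directly on an arbitrary elementary diagonal chain
\[
K \;=\; (e^{i_0}_{\alpha_0}\times\bar e^{i_1}_{\alpha_1})\otimes_C (e^{i_1}_{\alpha_1}\times\bar e^{i_2}_{\alpha_2})\otimes_C\cdots\otimes_C (e^{i_p}_{\alpha_p}\times\bar e^{i_0}_{\alpha_0})
\]
and then extend by $L_2$-continuity to all of $C^\Delta_p(HS(X))$. Write $S(K)=M_0\otimes_C M_1\otimes_C\cdots\otimes_C M_{p+1}$ with $M_0=e^{i_0}_{\alpha_0}\times\bar e^{I}_{\alpha_0}$, $M_1=e^{I}_{\alpha_0}\times\bar e^{i_1}_{\alpha_1}$, and $M_{j}=K_{j-1}$ for $j\ge 2$. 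The key algebraic facts that will drive the computation are the composition rule (6), which gives two identities I will use repeatedly: first $M_0\circ M_1=e^{i_0}_{\alpha_0}\times\bar e^{i_1}_{\alpha_1}=K_0$, and second $M_1\circ M_2=e^{I}_{\alpha_0}\times\bar e^{i_2}_{\alpha_2}$, which is precisely the factor that $S$ would place in position $1$ after the insertion $(e^{i_0}_{\alpha_0}\times \bar e^{I}_{\alpha_0})$ applied to the contracted chain $\partial^H_0 K$.

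With $S(K)$ in hand, I compute $b' S(K)=\sum_{k=0}^{p}(-1)^k\partial^H_k S(K)$ term by term. The $k=0$ face collapses $M_0$ and $M_1$ into $K_0$, returning exactly $K$. For $k=1$, the composition $M_1\circ M_2$ reproduces the $I$-th index in the first position and delivers $S(\partial^H_0 K)$. For $k\ge 2$, the face operator touches only factors coming from $K$, and since the leading insertion $M_0\otimes_C M_1$ depends only on $i_0,\alpha_0$ (which are untouched by $\partial^H_{k-1}$ on $K$ when $k-1\le p-1$), one verifies the commutation
\[
\partial^H_k S(K) \;=\; S\bigl(\partial^H_{k-1} K\bigr),\qquad 1\le k\le p.
\]
Note that each $\partial^H_{k-1}K$ still belongs to $C^\Delta_{p-1}(HS(X))$ by the observation following Definition 4, so $S$ is defined on it.

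Collecting the signs, the $k=0$ term yields $K$ and the remaining sum becomes
\[
\sum_{k=1}^{p}(-1)^k S(\partial^H_{k-1} K) \;=\; -\sum_{j=0}^{p-1}(-1)^j S(\partial^H_j K) \;=\; -S\,b'(K),
\]
so $b' S(K)+S\,b'(K)=K$, as required. Extending by linearity and continuity to general chains in $C^\Delta_p(HS(X))$ finishes the argument.

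The only delicate point is bookkeeping: one must check that the index shift from $k$ to $k-1$ indeed produces the correct sign flip that turns $+S\,b'$ on the left of my computation into $-S\,b'$, and one must verify that the insertion commutes with $\partial^H_k$ for $k\ge 2$ (which reduces to the fact that these faces do not touch the first tensor slot). Both are essentially mechanical once the identity $\partial^H_1 S(K)=S(\partial^H_0 K)$ is in place, so the main conceptual content is that single commutation, guaranteed by the composition rule (6) together with the diagonal form (14).
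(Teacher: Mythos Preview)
Your proof is correct and follows essentially the same approach as the paper: a direct verification on elementary diagonal chains, extended by linearity and continuity. The only difference is presentational---where the paper writes out $b'S(K)$ and $Sb'(K)$ term by term and then observes the cancellation, you package the same computation via the commutation identity $\partial^{H}_{k}S(K)=S(\partial^{H}_{k-1}K)$ for $1\le k\le p$, which is a slightly cleaner bookkeeping of exactly the same cancellation.
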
    

\begin{proof}   
We compute first $b' S_{(p)} (K)$. One has
\begin{multline}
b' S_{(p)}  (K) =              
b' S_{(p)} 
[(e^{i_{0}} _{\alpha_{0}}\times \bar{e}^{i_{1}}_{\alpha_{1}} )
 \otimes_{C} 
 (e^{i_{1}} _{\alpha_{1}} \times \bar{e}^{i_{2}}_{\alpha_{2}} )  
 \otimes_{C}  ... \otimes_{C}  
 (e^{i_{p}}_{\alpha_{p}} \times \bar{e}^{i_{0}}_{\alpha_{0}} )] := \\
:=  b' [ (e^{i_{0}} _{\alpha_{0}}\times \bar{e}^{I}_{\alpha_{0}} )   
  \otimes_{C} 
  (e^{I}_{\alpha_{0}} \times \bar{e}^{i_{1}}_{\alpha_{1}} )
 \otimes_{C} 
 (e_{\alpha_{1}}^{i_{1}} \times \bar{e}^{i_{2}}_{\alpha_{2}} )  \otimes_{C}  ... \otimes_{C}  (e_{\alpha_{p}}^{i_{p}} \times \bar{e}^{i_{0}}_{\alpha_{0}} )] =  \\
=  (e^{i_{0}}_{\alpha_{0}} \times \bar{e}^{i_{1}}_{\alpha_{1}} )
\otimes_{C} 
 (e^{i_{1}}_{\alpha_{1}} \times \bar{e}^{i_{2}}_{\alpha_{2}} )  
 \otimes_{C}  ... \otimes_{C}
(e^{i_{p}}_{\alpha_{p}} \times \bar{e}^{i_{0}}_{\alpha_{0}} )   \\
 -  (e^{i_{0}}_{\alpha_{0}} \times \bar{e}^{I}_{\alpha_{0}} )  
  \otimes_{C}
 b' [  (e^{I}_{\alpha_{0}} \times \bar{e}^{i_{1}}_{\alpha_{1}} )
 \otimes_{C} 
 (e_{\alpha_{1}}^{i_{1}} \times \bar{e}^{i_{2}}_{\alpha_{2}} )  \otimes_{C}  ... \otimes_{C}  (e_{\alpha_{p}}^{i_{p}} \times \bar{e}^{i_{0}}_{\alpha_{0}} )] 
 = \\
 = K   
 -  (e^{i_{0}}_{\alpha_{0}} \times \bar{e}^{I}_{\alpha_{0}} )  
  \otimes_{C}
 b' [  (e^{I}_{\alpha_{0}} \times \bar{e}^{i_{1}}_{\alpha_{1}} )
 \otimes_{C} 
 (e_{\alpha_{1}}^{i_{1}} \times \bar{e}^{i_{2}}_{\alpha_{2}} )  \otimes_{C}  ... \otimes_{C}  (e_{\alpha_{p}}^{i_{p}} \times \bar{e}^{i_{0}}_{\alpha_{0}} )]. 
\end{multline}     
On the other hand
\begin{multline}              
S_{(p-1)} b' (K) = \\
= S_{(p-1)} [   
 (e^{i_{0}}_{\alpha_{0}} \times \bar{e}^{i_{2}}_{\alpha_{2}} )  
 \otimes_{C} 
  (e^{i_{2}}_{\alpha_{2}} \times \bar{e}^{i_{3}}_{\alpha_{3}} )  
\otimes_{C}  ... \otimes_{C}
(e^{i_{p}}_{\alpha_{p}} \times \bar{e}^{i_{0}}_{\alpha_{0}} )  \\
- (e^{i_{0}}_{\alpha_{0}} \times \bar{e}^{i_{1}}_{\alpha_{1}} )
\otimes_{C} 
 (e^{i_{1}}_{\alpha_{1}} \times \bar{e}^{i_{3}}_{\alpha_{3}} )  
 \otimes_{C}  ... \otimes_{C}
(e^{i_{p}}_{\alpha_{p}} \times \bar{e}^{i_{0}}_{\alpha_{0}} ) 
+ ....  \\
+(-1)^{k} (e^{i_{0}}_{\alpha_{0}} \times \bar{e}^{i_{1}}_{\alpha_{1}} )
\otimes_{C} 
 (e^{i_{1}}_{\alpha_{1}} \times \bar{e}^{i_{2}}_{\alpha_{2}} )  
 \otimes_{C}  ... \otimes_{C}
  (e^{i_{k}}_{\alpha_{k}} \times \bar{e}^{i_{k+2}}_{\alpha_{k+2}} )  
\otimes_{C}  ... \otimes_{C}
(e^{i_{p}}_{\alpha_{p}} \times \bar{e}^{i_{0}}_{\alpha_{0}} )  
+  ....  \\
+ (-1)^{p-1}
(e^{i_{0}}_{\alpha_{0}} \times \bar{e}^{i_{1}}_{\alpha_{1}} )
\otimes_{C} 
 (e^{i_{1}}_{\alpha_{1}} \times \bar{e}^{i_{2}}_{\alpha_{2}} )  
 \otimes_{C}  ... \otimes_{C}
(e^{i_{p-1}}_{\alpha_{p-1}} \times \bar{e}^{i_{0}}_{\alpha_{0}} ) ] = \\
= (e^{i_{0}}_{\alpha_{0}} \times \bar{e}^{I}_{\alpha_{0}} )  \otimes_{C}  (e^{I}_{\alpha_{0}} \times \bar{e}^{i_{2}}_{\alpha_{2}} ) 
 \otimes_{C} 
  (e^{i_{2}}_{\alpha_{2}} \times \bar{e}^{i_{3}}_{\alpha_{3}} )  
\otimes_{C}  ... \otimes_{C}
(e^{i_{p}}_{\alpha_{p}} \times \bar{e}^{i_{0}}_{\alpha_{0}} )  \\
- (e^{i_{0}}_{\alpha_{0}} \times \bar{e}^{I}_{\alpha_{0}} ) \otimes_{C}  (e^{I}_{\alpha_{0}} \times \bar{e}^{i_{1}}_{\alpha_{1}} ) 
\otimes_{C} 
 (e^{i_{1}}_{\alpha_{1}} \times \bar{e}^{i_{3}}_{\alpha_{3}} )  
 \otimes_{C}  ... \otimes_{C}
(e^{i_{p}}_{\alpha_{p}} \times \bar{e}^{i_{0}}_{\alpha_{0}} ) 
+ ....  \\
+(-1)^{k} (e^{i_{0}}_{\alpha_{0}} \times \bar{e}^{I}_{\alpha_{0}} ) \otimes_{C}  (e^{I}_{\alpha_{0}} \times \bar{e}^{i_{1}}_{\alpha_{1}} ) 
\otimes_{C} 
 (e^{i_{1}}_{\alpha_{1}} \times \bar{e}^{i_{2}}_{\alpha_{2}} )  
 \otimes_{C}  ... \otimes_{C}
  (e^{i_{k}}_{\alpha_{k}} \times \bar{e}^{i_{k+2}}_{\alpha_{k+2}} )  
\otimes_{C}  ... \otimes_{C}
(e^{i_{p}}_{\alpha_{p}} \times \bar{e}^{i_{0}}_{\alpha_{0}} )  
+  ....  \\
+ (-1)^{p-1}
(e^{i_{0}}_{\alpha_{0}} \times \bar{e}^{I}_{\alpha_{0}} ) \otimes_{C}  (e^{I}_{\alpha_{0}} \times \bar{e}^{i_{1}}_{\alpha_{1}} ) 
\otimes_{C} 
 (e^{i_{1}}_{\alpha_{1}} \times \bar{e}^{i_{2}}_{\alpha_{2}} )  
 \otimes_{C}  ... \otimes_{C}
(e^{i_{p-1}}_{\alpha_{p-1}} \times \bar{e}^{i_{0}}_{\alpha_{0}} ) ] = \\
= (e^{i_{0}}_{\alpha_{0}} \times \bar{e}^{I}_{\alpha_{0}} )     
  \otimes_{C}
 b' [  (e^{I}_{\alpha_{0}} \times \bar{e}^{i_{1}}_{\alpha_{1}} )
 \otimes_{C} 
 (e_{\alpha_{1}}^{i_{1}} \times \bar{e}^{i_{2}}_{\alpha_{2}} )  \otimes_{C}  ... \otimes_{C}  (e_{\alpha_{p}}^{i_{p}} \times \bar{e}^{i_{0}}_{\alpha_{0}} )] .
\end{multline}    
Summing up formulas (31) and (32) proves the proposition.
\end{proof}   
\begin{corollary}   
The complex     $\{  C^{\Delta}_{p}(\mathit{HS} (X)), b' \} $  is acyclic.
\end{corollary}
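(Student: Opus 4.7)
The plan is to read off acyclicity directly from the chain homotopy identity just established in Proposition 11. Since $b'S + Sb' = \mathrm{Id}$ on the diagonal complex, the operator $S$ is a contracting chain homotopy between the identity and the zero map, and this is the standard criterion for a chain complex to have trivial homology in every degree.

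Concretely, I would argue as follows. Pick an arbitrary cycle $K \in C^{\Delta}_p(HS(X))$, so that $b'K = 0$. Applying Proposition 11 yields
\[
K \;=\; (b'S + Sb')K \;=\; b'(SK) + S(b'K) \;=\; b'(SK),
\]
which exhibits $K$ as the boundary of the chain $SK \in C^{\Delta}_{p+1}(HS(X))$. Since $S$ raises degree by one (Definition 10) and is defined on all elementary chains of $C^{\Delta}_\ast(HS(X))$, the same line works uniformly for every $p \geq 0$, and no separate treatment of low degrees is needed.

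I do not anticipate any obstacle here: Proposition 11 has already carried out all of the combinatorial verification, and the corollary amounts to little more than restating that homotopy identity in homological language. If one wishes to emphasise naturality, one can also note that the same argument shows that the inclusion of the zero subcomplex into $\{C^{\Delta}_\ast(HS(X)), b'\}$ is a quasi-isomorphism, with $S$ providing an explicit nullhomotopy.
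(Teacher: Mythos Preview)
Your argument is correct and is exactly the intended one: the paper states the corollary with no separate proof, relying on Proposition 11 to furnish the contracting homotopy $S$ with $b'S+Sb'=\mathrm{Id}$, from which acyclicity is immediate. There is nothing to add or change.
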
  
\subsubsection{The Homology of the Sub-complex $ \{  C^{\Delta}_{p}(\mathit{HS} (X)), b \} $. }  
\begin{proposition}      
The operator $S$ establishes on the complex $K \{ C^{\Delta}_{p}(\mathit{HS} (X)), b \}$
a homotopy between the identity and the operator $\theta$
\begin{equation}
  b\; S_{(p)} + S_{(p-1)} \; b \;  = \; Id. - \theta,    
\end{equation}
where
\begin{multline}
\theta  [ (e^{i_{0}}_{\alpha_{0}} \times \bar{e}^{i_{1}}_{\alpha_{1}} )     
 \otimes_{C} 
 (e^{i_{1}}_{\alpha_{1}} \times \bar{e}^{i_{2}}_{\alpha_{2}} )  \otimes_{C}  ... \otimes_{C}  (e^{i_{p}} _{\alpha_{p}}\times \bar{e}^{i_{0}}_{\alpha_{0}} )]   = \\
 = (-1)^{p}  
 [ \;   (e^{i_{p}} _{\alpha_{p}} \times \bar{e}^{I}_{\alpha_{0}} )    
 \otimes_{C} 
 (e^{I}_{\alpha_{0}} \times \bar{e}^{i_{1}}_{\alpha_{1}} )    -
 (e^{i_{p}} _{\alpha_{p}}\times \bar{e}^{I}_{\alpha_{p}} )  
 \otimes_{C} 
(e^{I} _{\alpha_{p}}\times \bar{e}^{i_{1}}_{\alpha_{1}}) \;  ] 
 \otimes_{C}
 [ (e^{i_{1}}_{\alpha_{1}} \times \bar{e}^{i_{2}}_{\alpha_{2}} )  \otimes_{C}  ... \otimes_{C}   
 (e^{i_{p-1}} _{\alpha_{p-1}}\times \bar{e}^{i_{p}}_{\alpha_{p}} ) ] .
\end{multline}
\end{proposition}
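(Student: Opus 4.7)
The plan is to reduce the statement to Proposition 12 by isolating the contribution of the cyclic (last) Hochschild face. Writing the Hochschild boundary as $b_{(p)} = b'_{(p)} + (-1)^{p}\,\partial^{H}_{p}$, where $b'_{(p)} = \sum_{k=0}^{p-1}(-1)^{k}\partial^{H}_{k}$ is the boundary used in Proposition 12, one gets
\[
b\,S_{(p)} + S_{(p-1)}\,b \;=\; \bigl(b'\,S_{(p)} + S_{(p-1)}\,b'\bigr) \;+\; (-1)^{p+1}\partial^{H}_{p+1}\,S_{(p)} \;+\; (-1)^{p}\,S_{(p-1)}\,\partial^{H}_{p}.
\]
By Proposition 12 the first parenthesis equals $\mathrm{Id}$, so the desired formula reduces to the identification
\[
\theta \;=\; (-1)^{p}\bigl[\partial^{H}_{p+1}\,S_{(p)} \;-\; S_{(p-1)}\,\partial^{H}_{p}\bigr],
\]
and the problem becomes a direct computation of two terms on the elementary chain $K$ of (14).

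First I would compute $\partial^{H}_{p+1}S_{(p)}(K)$. The operator $S$ inserts $(e^{i_0}_{\alpha_0}\times\bar{e}^{I}_{\alpha_0})\otimes_{C}(e^{I}_{\alpha_0}\times\bar{e}^{i_1}_{\alpha_1})$ in place of the first factor, producing a $(p+1)$-chain with $p+2$ tensor factors. The face $\partial^{H}_{p+1}$ wraps the final factor $(e^{i_p}_{\alpha_p}\times\bar{e}^{i_0}_{\alpha_0})$ around and composes it with the new first factor $(e^{i_0}_{\alpha_0}\times\bar{e}^{I}_{\alpha_0})$. By the composition rule (6), this composition equals $(e^{i_p}_{\alpha_p}\times\bar{e}^{I}_{\alpha_0})$, yielding exactly the first summand of (34) up to the sign $(-1)^{p}$.

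Next I would compute $S_{(p-1)}\partial^{H}_{p}(K)$. Here $\partial^{H}_{p}$ acts first on $K$, composing its last factor with its first to produce $(e^{i_p}_{\alpha_p}\times\bar{e}^{i_1}_{\alpha_1})\otimes_{C}(e^{i_1}_{\alpha_1}\times\bar{e}^{i_2}_{\alpha_2})\otimes_{C}\cdots\otimes_{C}(e^{i_{p-1}}_{\alpha_{p-1}}\times\bar{e}^{i_p}_{\alpha_p})$, which is a diagonal $(p-1)$-chain whose first ``base'' index is $\alpha_p$ rather than $\alpha_0$. Applying $S_{(p-1)}$ therefore inserts $(e^{i_p}_{\alpha_p}\times\bar{e}^{I}_{\alpha_p})\otimes_{C}(e^{I}_{\alpha_p}\times\bar{e}^{i_1}_{\alpha_1})$ in front, which is the second summand of (34), with the opposite sign relative to the first step. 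Combining both contributions with the overall factor $(-1)^{p}$ produces formula (34) exactly.

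The substantive difficulty is not the algebra but the bookkeeping: the two terms constituting $\theta$ differ only by whether the auxiliary wavelet $I$ is attached to the simplex $\alpha_0$ or to $\alpha_p$, and this asymmetry is precisely what is created by the non-commutation of the cyclic face $\partial^{H}_{p}$ with $S$ (since $S$ uses the \emph{leftmost} simplex label to pick the chosen element $I$, and wrapping the last factor around changes which simplex that is). Once this is tracked carefully, signs line up immediately from the decomposition $b=b'+(-1)^p\partial^{H}_{p}$ and from the shift of degree in $S_{(p)}$ versus $S_{(p-1)}$. No convergence issue arises since everything is done on one elementary chain and the answer is a finite sum of elementary chains.
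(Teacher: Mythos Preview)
Your proposal is correct and follows essentially the same route as the paper: decompose $b=b'+(-1)^{p}\partial^{H}_{p}$, invoke the $b'$-homotopy identity $b'S+Sb'=\mathrm{Id}$ (this is Proposition~11, not~12), and then compute the two remaining terms $(-1)^{p+1}\partial^{H}_{p+1}S_{(p)}(K)$ and $(-1)^{p}S_{(p-1)}\partial^{H}_{p}(K)$ explicitly on an elementary diagonal chain to identify $\theta$. The paper's proof is the same calculation, displayed in equation~(35).
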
      
\begin{proof} For the proof of this proposition we use Proposition 11. One has
\begin{multline}     
( \; bS_{(p)} + S_{(p-1)} b \;) (K) =  \\
 = [\;b' + (-1)^{p+1} \partial_{(p+1)p+1}\;]S_{(p)} + S_{(p-1)} [\;b' + (-1)^{p} \partial_{(p)p} \;)](K) 
= \\
  = K +  (-1)^{p+1} \partial_{(p+1)p+1}S_{(p)} \;(K) + (-1)^{p}  S_{(p-1)}   \partial_{(p)p} \; (K)  = \\
= K + 
(-1)^{p+1}  \partial_{(p+1)p+1}
[ (e^{i_{0}}_{\alpha_{0}} \times \bar{e}^{I}_{\alpha_{0}} )    
 \otimes_{C} 
 (e^{I}_{\alpha_{0}} \times \bar{e}^{i_{1}}_{\alpha_{1}} )     
 \otimes_{C} 
 (e^{i_{1}}_{\alpha_{1}} \times \bar{e}^{i_{2}}_{\alpha_{2}} )  \otimes_{C}  ... \otimes_{C}  
 (e^{i_{p}} _{\alpha_{p}}\times \bar{e}^{i_{0}}_{\alpha_{0}} )]  \; + \\
 + (-1)^{p} S_{(p-1)}  
[ (e^{i_{p}} _{\alpha_{p}}\times \bar{e}^{i_{1}}_{\alpha_{1}} )  
 \otimes_{C} 
 (e^{i_{1}}_{\alpha_{1}} \times \bar{e}^{i_{2}}_{\alpha_{2}} )  \otimes_{C}  ... \otimes_{C}  
 (e^{i_{p-1}} _{\alpha_{p-1}}\times \bar{e}^{i_{p}}_{\alpha_{p}} )]  = \\
= K \;+ \;  (-1)^{p+1}     
  (e^{i_{p}} _{\alpha_{p}} \times \bar{e}^{I}_{\alpha_{0}} )    
 \otimes_{C} 
 (e^{I}_{\alpha_{0}} \times \bar{e}^{i_{1}}_{\alpha_{1}} )     
 \otimes_{C} 
 (e^{i_{1}}_{\alpha_{1}} \times \bar{e}^{i_{2}}_{\alpha_{2}} )  \otimes_{C}  ...  
  \otimes_{C} 
 (e^{i_{p-1}} _{\alpha_{p-1}}\times \bar{e}^{i_{p}}_{\alpha_{p}} )
 \otimes_{C}  
(e^{i_{p}} _{\alpha_{p}}\times \bar{e}^{i_{0}}_{\alpha_{0}} )  \; + \\
+ (-1)^{p} 
[ (e^{i_{p}} _{\alpha_{p}}\times \bar{e}^{I}_{\alpha_{p}} )  
 \otimes_{C} 
 (e^{I} _{\alpha_{p}}\times \bar{e}^{i_{1}}_{\alpha_{1}} )  
\otimes_{C} 
 (e^{i_{1}}_{\alpha_{1}} \times \bar{e}^{i_{2}}_{\alpha_{2}} )]  \otimes_{C}  ... \otimes_{C}   
 (e^{i_{p-1}} _{\alpha_{p-1}}\times \bar{e}^{i_{p}}_{\alpha_{p}} )]  =  \\
 = K \;-\; (-1)^{p}  
 [ \;   (e^{i_{p}} _{\alpha_{p}} \times \bar{e}^{I}_{\alpha_{0}} )    
 \otimes_{C} 
 (e^{I}_{\alpha_{0}} \times \bar{e}^{i_{1}}_{\alpha_{1}} )    -
 (e^{i_{p}} _{\alpha_{p}}\times \bar{e}^{I}_{\alpha_{p}} )  
 \otimes_{C} 
(e^{I} _{\alpha_{p}}\times \bar{e}^{i_{1}}_{\alpha_{1}}) \;  ] 
 \otimes_{C}
 [ (e^{i_{1}}_{\alpha_{1}} \times \bar{e}^{i_{2}}_{\alpha_{2}} )  \otimes_{C}  ... \otimes_{C}   
 (e^{i_{p-1}} _{\alpha_{p-1}}\times \bar{e}^{i_{p}}_{\alpha_{p}} ) ] ,
 \end{multline}
 which completes the proof of the Proposition 13.
\end{proof}  
\par
We intend to describe  the Hochschild boundary acting  on $C^{\Delta}_{p}(\mathit{HS} (X)) $  in a  
more topological fashion; this will help us to understand better the hidden geometry and suggest the next
constructions. 
For doing this we introduce the following 
 \begin{notation}        
To the elementary chain  $K \in C^{\Delta}_{p}(\mathit{HS} (X)) $  
 \begin{equation}     
K =  (e_{\alpha_{0}}^{i_{0}} \times \bar{e}^{i_{1}}_{\alpha_{1}} )
 \otimes_{C} 
 (e_{\alpha_{1}}^{i_{1}} \times \bar{e}^{i_{2}}_{\alpha_{2}} )  \otimes_{C}  ... \otimes_{C}  (e_{\alpha_{p}}^{i_{p}} \times \bar{e}^{i_{0}}_{\alpha_{0}} )  
 \end{equation}   
 we bi-univocally associate the symbol
 \begin{equation}     
 [K] =
 \begin{bmatrix}
    i_{1} & ...  &  i_{p-1}  &   i_{p}  &   i_{0}  \\
   \alpha_{1} &  ... &  \alpha_{p-1}  &  \alpha_{p}  &  \alpha_{0}
\end{bmatrix}   
 \end{equation}   
 \end{notation}   
 \begin{proposition}  
 Using the Notation 14, the Hochschild boundary faces are
 \begin{gather}    
 [b_{(p)k}K] =   (-1)^{k-1}
 \begin{bmatrix}
    i_{1}         & ...  & \hat{  i_{k} }       & ... &  i_{p-1}      &   i_{p}   &   i_{0}     \\
  \alpha_{1} &  ... &\hat{\alpha_{k}} & ...  & \alpha_{p-1}  & \alpha_{p}  & \alpha_{0}
\end{bmatrix}
 \text{, for $0 \leq k \leq p-1$} 
 \end{gather}   
 and
 \begin{equation}   
 [b_{(p)p}K] =  
 (-1)^{p}
 \begin{bmatrix}
     i_{1}         & ...  &  i_{k}       & ... &  i_{p-1}             &   i_{p}   &   \hat{i_{0 } }    \\
 \alpha_{1} &  ... &\alpha_{k} & ...  &  \alpha_{p-1}  &  \alpha_{p}  &  \hat {\alpha_{0} } 
\end{bmatrix}
 \end{equation}  
 \end{proposition}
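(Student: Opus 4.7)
The plan is a direct unwinding of the definitions, made very short by the fact that $K \in C^{\Delta}_p(\mathit{HS}(X))$ has no gaps, so every Kronecker factor in the composition rule (6) equals $1$. Nothing deeper than Proposition 7 is needed.

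First I will treat the internal faces $b_{(p)k}$ with $0 \le k \le p-1$. Formula (10) collapses the two consecutive factors $K_k \otimes_C K_{k+1}$ into the single kernel $K_k \circ K_{k+1}$. Since consecutive factors of a diagonal chain share their intermediate labels, (6) reduces at once to
\[
(e^{i_k}_{\alpha_k} \times \bar{e}^{i_{k+1}}_{\alpha_{k+1}}) \circ (e^{i_{k+1}}_{\alpha_{k+1}} \times \bar{e}^{i_{k+2}}_{\alpha_{k+2}}) \;=\; (e^{i_k}_{\alpha_k} \times \bar{e}^{i_{k+2}}_{\alpha_{k+2}}).
\]
Hence the intermediate pair $(i_{k+1},\alpha_{k+1})$ disappears while the diagonal form of the chain is preserved, so the result lies in $C^{\Delta}_{p-1}(\mathit{HS}(X))$. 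In the symbol language of (37) this is precisely the deletion of the column carrying the vanished pair; the sign $(-1)^k$ from (9) then yields (38).

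Next I will handle the cyclic face $b_{(p)p}$ using (11). The composition $K_p \circ K_0$ again collapses via (6) and the no-gap condition to $(e^{i_p}_{\alpha_p} \times \bar{e}^{i_1}_{\alpha_1})$, so the pair $(i_0,\alpha_0)$ is the one erased. After placing the resulting $(p-1)$-chain back into the canonical cyclic form prescribed by (14), and attaching the sign $(-1)^p$ from (9), the matrix (39) falls out.

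The only genuine difficulty, and so the main ``obstacle'', is purely clerical: matching the index that is removed by composition against the column labels of (37), and verifying that the cyclic repositioning needed for the $k=p$ case is compatible with the chosen ordering of columns. Both are mechanical, but they are where sign or off-by-one errors would enter if any are to enter at all.
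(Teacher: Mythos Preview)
Your proposal is correct and matches the paper's own treatment: Proposition 15 is stated in the paper without any proof, being regarded as an immediate consequence of the composition rule (6) applied to gap-free elementary chains together with the definition (37) of the symbol $[K]$. Your direct unwinding of the definitions is precisely that omitted computation made explicit, and your identification of the clerical column/sign bookkeeping as the only place where care is needed is accurate.
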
    
 \begin{definition}     
Let  $[\mathit{X}]^{\Delta}$  be the simplicial set whose vertices are the 
column-pairs $[i_{k} \alpha_{k}]$ and whose boundary faces $\partial_{(p)k}$ are the simplicial boundary faces. 
 \end{definition}   
 Proposition 15 implies the next result.
 \begin{theorem}   
 The linear bijective mapping $ [\;]: K \rightarrow [K]$ interchanges  
 \par
 i) the Hochschild boundary faces with the simplicial boundary faces and hence
 \par  
 ii)  $ [\;] $ establishes an isomorphism from the complex $C^{\Delta}_{p}(\mathit{HS} (X))$ to the simplicial chain complex
 of the simplicial space $[X]^{\Delta}$
 \begin{equation}   
 [b K] = \partial [K],  \hspace{0.5cm} for\;any\; K \in C^{\Delta}_{p}(\mathit{HS} (X)).
 \end{equation}   
 \end{theorem}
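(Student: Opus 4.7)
The plan is to assemble the ingredients already in place, since Proposition 15 essentially contains the computational heart of the statement: it already matches each individual Hochschild face $b_{(p)k}$ with a simplicial face operator on the matrix symbol $[K]$. What remains is to promote this face-by-face matching to a chain isomorphism.

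First, I would establish that $K \mapsto [K]$ is a linear bijection. On elementary chains of $C^{\Delta}_{p}(\mathit{HS}(X))$, injectivity is immediate from the uniqueness of the tensor decomposition (14): a diagonal elementary chain is recovered from, and determined by, its sequence of column pairs $(i_{k},\alpha_{k})$ (consecutive factors are forced to share indices by the gap-free condition). Surjectivity is also immediate, since any sequence of such column pairs assembles into a legitimate elementary chain of the form (14). Extending the assignment linearly and passing to the $L_{2}$-completion (using that elementary chains form an orthonormal basis of $\bar{C}_{p}(\mathit{HS})$) yields a linear isomorphism between $C^{\Delta}_{p}(\mathit{HS}(X))$ and the vector space of simplicial $p$-chains on $[X]^{\Delta}$.

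Next, for the commutation $[bK]=\partial [K]$ on an elementary chain $K$, I would sum over $k$ the equalities from Proposition 15. Using (8)--(9), one has
\[
[bK] \;=\; \sum_{k=0}^{p-1}(-1)^{k}[\partial^{H}_{(p)k}K] \;+\; (-1)^{p}[\partial^{H}_{(p)p}K],
\]
and substituting the formulas (38) and (39) should give precisely the simplicial boundary expansion $\partial[K]=\sum_{k}(-1)^{k}\partial_{k}[K]$ of the simplex $[K]$ in $[X]^{\Delta}$. Statement ii) follows at once, since a linear bijection that commutes with boundaries is a chain isomorphism, and the homology on the right is by definition the simplicial homology of $[X]^{\Delta}$.

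The main obstacle I anticipate is purely bookkeeping: reconciling the sign conventions. The Hochschild boundary already absorbs an alternating sign $(-1)^{k}$ in (9), while Proposition 15 has packed further $(-1)^{k-1}$ (respectively $(-1)^{p}$) factors into the matrix symbol. One must verify that the product of the two sign contributions equals the alternating sign of the simplicial boundary face removing the appropriate column. A check in the low-dimensional case $p=2$ should make the combinatorics transparent, after which the general pattern is uniform; no additional analytic input is needed beyond the elementary-composition formula (6), which is precisely what made Proposition 15 work in the first place.
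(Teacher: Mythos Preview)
Your proposal is correct and follows the same approach as the paper: the paper's entire argument for this theorem is the single sentence ``Proposition 15 implies the next result,'' so your elaboration (bijectivity from the orthonormal-basis description of diagonal elementary chains, then summing the face identities of Proposition 15) is exactly the intended route, only spelled out in more detail. Your caution about sign bookkeeping is appropriate---the shifted column ordering $[i_{1},\dots,i_{p},i_{0}]$ in Notation 14 is precisely what makes the Hochschild signs line up with the simplicial ones---but no new idea is required beyond what you have outlined.
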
  
 \begin{theorem}  
i) The Hochschild homology of the algebra of Hilbert-Schmidt operators on $X$ is isomorphic to the simplicial homology of $[X]^{\Delta}$, (see Definition 16).
\par
ii) The simplicial complex $[X]^{\Delta}$ is an infinite dimensional simplex.
\end{theorem}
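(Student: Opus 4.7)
The plan is to deduce both statements directly from the identifications already established in \S 6. For part (i), Proposition 8 reduces the computation of the continuous Hochschild homology of $\mathit{HS}(X)$ to the homology of the diagonal subcomplex $\{C^{\Delta}_{\ast}(\mathit{HS}(X)), b\}$, the acyclic piece $C^{0}_{\ast}(\mathit{HS}(X))$ being killed by the homotopy $s$. The preceding theorem, built on Notation 14 and Proposition 15, exhibits the bracket map $K \mapsto [K]$ as a linear bijection intertwining the Hochschild boundary $b$ with the simplicial boundary $\partial$ on $[X]^{\Delta}$. Composing these two identifications yields the desired isomorphism between the Hochschild homology of $\mathit{HS}(X)$ and the simplicial homology of $[X]^{\Delta}$, completing part (i).

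For part (ii) I would verify directly that $[X]^{\Delta}$ satisfies the defining combinatorial property of an infinite-dimensional simplex: every finite tuple of distinct vertices spans a simplex, and the vertex set is infinite. By Definition 16 the vertices of $[X]^{\Delta}$ are the column-pairs $[i, \alpha]$ with $i \in N$ and $\alpha \in \Lambda$, so the vertex set is countably infinite. Given any ordered tuple of distinct pairs $[i_{0}, \alpha_{0}], [i_{1}, \alpha_{1}], \ldots, [i_{p}, \alpha_{p}]$, the formula (14) applied to these indices produces a genuine elementary chain
\[
(e^{i_{0}}_{\alpha_{0}} \times \bar{e}^{i_{1}}_{\alpha_{1}}) \otimes_{C} (e^{i_{1}}_{\alpha_{1}} \times \bar{e}^{i_{2}}_{\alpha_{2}}) \otimes_{C} \cdots \otimes_{C} (e^{i_{p}}_{\alpha_{p}} \times \bar{e}^{i_{0}}_{\alpha_{0}})
\]
in $C^{\Delta}_{p}(\mathit{HS}(X))$; the key observation is that consecutive tensor factors automatically share their matching column-pair, so the no-gap condition (Definition 6, iii) is vacuously satisfied. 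Translating back via the bracket map, the tuple spans a $p$-simplex of $[X]^{\Delta}$. Since $p$ is arbitrary, $[X]^{\Delta}$ has no finite-dimensional bound and is the full simplex on a countably infinite vertex set.

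I do not expect a substantive obstacle in this proof, since the combinatorial structure of the diagonal complex was designed precisely so that this reading holds; the result is essentially a matter of unwinding Notation 14 and Definition 16. The only detail deserving mild attention is that the bijection $K \mapsto [K]$ interacts correctly with the $L_{2}$-completion from Definition 5: the square-summability condition (15) on the coefficients $K^{\alpha_{0},\ldots,\alpha_{p}}_{i_{0},\ldots,i_{p}}$ corresponds exactly to the $\ell^{2}$-condition on simplicial chains of $[X]^{\Delta}$ written in the natural basis of column-pair tuples, so the isomorphism of (algebraic) complexes extends to the completed ones without modification, and the simplicial homology in part (i) should be interpreted as the homology of this $\ell^{2}$-completed simplicial chain complex.
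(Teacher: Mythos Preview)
Your proposal is correct and follows essentially the same approach as the paper: part (i) is obtained by combining Proposition 8 (reduction to the diagonal subcomplex) with Theorem 17 (the bracket map is a chain isomorphism), and part (ii) is the observation that any finite tuple of column-vertices $[i_k,\alpha_k]$ spans a simplex via the elementary chain in (14). Your added remark on the $\ell^2$-completion is a careful point that the paper leaves implicit.
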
  

\begin{proof}  
i) is a consequence of Theorem 17 -ii).
\par
ii) In the simplicial complex $[X]^{\Delta}$ any column-vertex $[i_{k}, \alpha_{k}]$  may be joint with any  column-vertex  $[i_{k+1}, \alpha_{k+1}]$ to create the simplex  (37).  In other words, the simplicial complex $[X]^{\Delta}$ is an infinite dimensional simplex and therefore its homology is trivial.
\end{proof}  

 \begin{theorem}   
 Let $X$ be a countable, locally finite simplicial complex of dimension $n$. Then, the Hochschild homology of the algebra of real or complex Hilbert-Schmidt operators on $X$ is trivial. 
\end{theorem}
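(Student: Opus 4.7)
The proof is essentially a one-line assembly of the machinery built up in Sections 5--6. The plan is to read off Theorem 19 as a direct corollary of Theorem 18, which in turn rests on Propositions 7--8 (splitting plus acyclicity of the ``gap'' subcomplex via the homotopy $s$) and Theorem 17 (the bijection $K \mapsto [K]$ intertwining the Hochschild boundary with the simplicial boundary on $[X]^{\Delta}$).

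First I would recall the reduction: by Proposition 8 iii), the continuous Hochschild homology of $\mathit{HS}(X)$ equals the homology of the diagonal subcomplex $\{C^{\Delta}_{\ast}(\mathit{HS}(X)),b\}$. Then, by Theorem 17 ii), the map $[\,\cdot\,]$ is a chain isomorphism from this diagonal complex onto the simplicial chain complex of the simplicial set $[X]^{\Delta}$ of Definition 16. Composing these two identifications, the continuous Hochschild homology of $\mathit{HS}(X)$ is naturally isomorphic to the simplicial homology of $[X]^{\Delta}$, which is Theorem 18 i). So the whole content of Theorem 19 reduces to proving that $H_{\ast}([X]^{\Delta}) = 0$ in positive degrees (and concentrated in degree zero otherwise).

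For this I would invoke Theorem 18 ii): by inspection of Notation 14, the column-pairs $[i_{k}\,\alpha_{k}]$ serve as vertices, and any finite ordered tuple of such pairs is a simplex of $[X]^{\Delta}$, because no compatibility between consecutive columns is required inside $C^{\Delta}_{\ast}(\mathit{HS}(X))$ beyond what is already built into the form (14) (the indices were already forced to match in writing down elementary diagonal chains, so after the relabeling the vertex set is a free set and every finite subset spans a simplex). Thus $[X]^{\Delta}$ is a full (infinite-dimensional) simplex on its vertex set.

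The last step is to note that a full simplex, even on an infinite vertex set, is acyclic: fixing any vertex $v_0$, the cone operator $c(\sigma) := [v_0,\sigma]$ defines a chain contraction $\partial c + c\partial = \mathrm{id}$ in positive degrees, so $H_p([X]^{\Delta}) = 0$ for $p \geq 1$ (and $H_0$ is one-dimensional, reflecting the absence of any support control). I expect no real obstacle here; the only point that might need a sentence of justification is why the acyclicity of the infinite simplex is valid without any finiteness or completeness issue, since our chains are $L_{2}$-series rather than finite sums. This is handled by observing that the cone operator $c$ is bounded of norm one on each graded piece (it just prepends a fixed factor and, under $[\,\cdot\,]$, corresponds to inserting the fixed vertex $[I,\alpha_0]$), so it extends from elementary chains to the $L_{2}$-completion and the identity $\partial c + c\partial = \mathrm{id}$ persists by continuity. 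Combining the three steps yields Theorem 19.
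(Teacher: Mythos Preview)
Your proposal is correct and follows essentially the same route as the paper's one-line proof: invoke Theorem 18 (Hochschild homology $\cong$ simplicial homology of $[X]^{\Delta}$, which is an infinite simplex) and conclude triviality from the acyclicity of a simplex. Your added remark on the $L_{2}$-boundedness of the cone operator is a welcome clarification the paper leaves implicit; just note that for a genuine cone the vertex must be completely fixed (both the basis index and the simplex label), unlike the paper's operator $S$ where $\alpha_{0}$ varies with the chain---but either choice is norm-preserving on elementary chains, so the argument goes through.
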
  
It is known that the Hochschild homology of Banach algebras "is not interesting", see Connes \cite{Connes}, or that in many cases it is trivial, see e.g. Gr¿nb¾k  \cite{ Gr¿nb¾k }, or  that "cyclic homology is degenerate on Banach or $C^{\ast}$-algebras", see \cite{Cuntz}, p. 42 and Proposition 3.5, Corollary 3.6., p. 39.  
The basic intent of this paper is to contribute to the better understanding of the interplay between the analysis and the topology at the level of Hochschild homology.  More specifically, we state that the relationship between the content of Theorem 19 and the Theorem 1, or the relationship between the Hochschild homology with no control on the chain-supports and {\em local} Hochschild homology \cite{Teleman_arXiv}, is the same relationship which occurs in the definition of the Alexander-Spanier (co)-homology before and after the consideration of the control on the supports, see \S 8.1.

\begin{proof}  
Given that the homology of the simplex is trivial, Theorem 19 is an immediate consequence of the Theorem 18.  
\end{proof}  
\par
The Notation 14 allows us to describe the operator $\theta$ in a more geometrical fashion, allowing simplices to be multiplied by integers
\begin{multline}
\theta                    
\begin{bmatrix}
  {  i_{1} }& ...  &{  i_{p-1} } & {  i_{p} }   &  i_{0}    \\
   { \alpha_{1} }&  ... & { \alpha_{p-1} } & { \alpha_{p} }  &   { \alpha_{0} }
\end{bmatrix}
= \\
 = (-1)^{p+1} [
\begin{bmatrix}
              I          &     i_{1}        &   i_{2}        & ... &     i_{p-1}    &  i_{p}       \\
    { \alpha_{p} }&  \alpha_{1} & \alpha_{2}  & ... &  \alpha_{p-1}   &  \alpha_{p}
\end{bmatrix}
- 
\begin{bmatrix}
       I          &    i_{1}       &   i_{2}         & ... &{  i_{p-1} }    &    i_{p}        \\
   \alpha_{0}   &  \alpha_{1} & \alpha_{2}  & ... & { \alpha_{p-1} }  & \alpha_{p} 
\end{bmatrix}
],
\end{multline}   
which may be written further

\begin{equation*}        
\theta
\begin{bmatrix}
  {  i_{1} }& ...  &{  i_{p-1} } & {  i_{p} }   & {  i_{0} }   \\
  { \alpha_{1} }&  ... & { \alpha_{p-1} } & { \alpha_{p} }  &   { \alpha_{0} } 
\end{bmatrix}
= 
\end{equation*}
\begin{gather}
=  (-1)^{p+1} 
( 
\begin{bmatrix}
             I        \\
    \alpha_{p}  
\end{bmatrix}
  - 
\begin{bmatrix}
       I       \\
   \alpha_{0}    
\end{bmatrix}
)
\begin{bmatrix}
    i_{1}       &   i_{2}         & ... &{  i_{p-1} }     & i_{p}     \\
     \alpha_{1} & \alpha_{2}  & ... & { \alpha_{p-1} }  & \alpha_{p}
\end{bmatrix}
\end{gather}          
\par
Formula (42) helps us to iterate  the operator $\theta$ easily.
\begin{remark}     
1. The operator $\theta$ removes the last vertex $[i_{0}, \alpha_{0}]$, replaces it with the difference  
$[I, \alpha_{p}] - [I, \alpha_{0}] $ and places it in the first position
\par
2. The operator $\theta$ does not affect the other vertices, except for a cyclical shift of them to the right.
\end{remark}    

\begin{proposition}    
The operator $\theta$ satisfies
\par
-i)
\begin{equation*}        
\theta^{p}
\begin{bmatrix}
  {  i_{1} }& ...  &{  i_{p-1} } & {  i_{p} }   & {  i_{0} }   \\
  { \alpha_{1} }&  ... & { \alpha_{p-1} } & { \alpha_{p} }  &  { \alpha_{0} } 
\end{bmatrix}
= 
\end{equation*}
\begin{gather}
=                 
( 
\begin{bmatrix}
             I        \\
    \alpha_{1}  
\end{bmatrix}
  - 
\begin{bmatrix}
       I       \\
   \alpha_{2}    
\end{bmatrix}
)
( 
\begin{bmatrix}
             I        \\
    \alpha_{2}  
\end{bmatrix}
  - 
\begin{bmatrix}
       I       \\
   \alpha_{3}    
\end{bmatrix}
)
.....
( 
\begin{bmatrix}
             I        \\
    \alpha_{p-1}  
\end{bmatrix}
  - 
\begin{bmatrix}
       I       \\
   \alpha_{p}    
\end{bmatrix}
)
( 
\begin{bmatrix}
             I        \\
    \alpha_{p}  
\end{bmatrix}
  - 
\begin{bmatrix}
       I       \\
   \alpha_{0}    
\end{bmatrix}
)
\begin{bmatrix}
             i_{1}       \\
    \alpha_{1}  
\end{bmatrix}
\end{gather}        
\par
-ii)

\begin{equation*}        
(-1)^{p+1} \; \theta^{p+1}
\begin{bmatrix}
  {  i_{1} }& ...  &{  i_{p-1} } & {  i_{p} }   &  {  i_{0} }    \\
  { \alpha_{1} }&  ... & { \alpha_{p-1} } & { \alpha_{p} }  &  { \alpha_{0} }
\end{bmatrix}
= 
\end{equation*}
\begin{gather}   
=  
( 
\begin{bmatrix}
             I        \\
    \alpha_{p}  
\end{bmatrix}
  - 
\begin{bmatrix}
       I       \\
   \alpha_{1}    
\end{bmatrix}
)
( 
\begin{bmatrix}
             I        \\
    \alpha_{2}  
\end{bmatrix}
  - 
\begin{bmatrix}
       I       \\
   \alpha_{1}    
\end{bmatrix}
)
.....
( 
\begin{bmatrix}
             I        \\
    \alpha_{p-1}  
\end{bmatrix}
  - 
\begin{bmatrix}
       I       \\
   \alpha_{p-2}    
\end{bmatrix}
)
( 
\begin{bmatrix}
             I        \\
    \alpha_{p}  
\end{bmatrix}
  - 
\begin{bmatrix}
       I       \\
   \alpha_{p-1}    
\end{bmatrix}
) 
\begin{bmatrix}
             I       \\
    \alpha_{p}  
\end{bmatrix}
\end{gather}         

\begin{gather}   
-
( 
\begin{bmatrix}
             I        \\
    \alpha_{0}  
\end{bmatrix}
  - 
\begin{bmatrix}
       I       \\
   \alpha_{1}    
\end{bmatrix}
)
( 
\begin{bmatrix}
             I        \\
    \alpha_{2}  
\end{bmatrix}
  - 
\begin{bmatrix}
       I       \\
   \alpha_{1}    
\end{bmatrix}
)
.....
( 
\begin{bmatrix}
             I        \\
    \alpha_{p-1}  
\end{bmatrix}
  - 
\begin{bmatrix}
       I       \\
   \alpha_{p-2}    
\end{bmatrix}
)
( 
\begin{bmatrix}
             I        \\
    \alpha_{p}  
\end{bmatrix}
  - 
\begin{bmatrix}
       I       \\
   \alpha_{p-1}    
\end{bmatrix}
) .
 \begin{bmatrix}
             I       \\
    \alpha_{0}  
\end{bmatrix}
\end{gather}      
\begin{gather}   
=
( 
\begin{bmatrix}
             I        \\
    \alpha_{p}  
\end{bmatrix}
  - 
\begin{bmatrix}
       I       \\
   \alpha_{1}    
\end{bmatrix}
)
( 
\begin{bmatrix}
             I        \\
    \alpha_{2}  
\end{bmatrix}
  - 
\begin{bmatrix}
       I       \\
   \alpha_{1}    
\end{bmatrix}
)
.....
( 
\begin{bmatrix}
             I        \\
    \alpha_{p-1}  
\end{bmatrix}
  - 
\begin{bmatrix}
       I       \\
   \alpha_{p-2}    
\end{bmatrix}
)
( 
\begin{bmatrix}
             I        \\
    \alpha_{p}  
\end{bmatrix}
  - 
\begin{bmatrix}
       I       \\
   \alpha_{p-1}    
\end{bmatrix}
) 
(
\begin{bmatrix}
             I       \\
    \alpha_{p}  
\end{bmatrix}
-
 \begin{bmatrix}
             I       \\
    \alpha_{0}  
\end{bmatrix}
)
\end{gather}   

\end{proposition}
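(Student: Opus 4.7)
The plan is to derive both (i) and (ii) by iteration from formula (42) (equivalently, Remark 20), using linearity of $\theta$ at each step. Remark 20 says that $\theta$ removes the last column of a pure $p$-chain, reads off the $\alpha$-labels of the second-to-last and last columns (call them $\beta$ and $\gamma$), and prepends the difference $([I/\beta]-[I/\gamma])$ with overall sign $(-1)^{p+1}$.

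First I would prove (i) by induction on $k=1,\dots,p$, showing that
\begin{multline*}
\theta^k[K] = \bigl[(-1)^{p+1}\bigr]^k \bigl([I/\alpha_{p-k+1}] - [I/\alpha_{p-k+2}]\bigr) \cdots \bigl([I/\alpha_{p-1}] - [I/\alpha_p]\bigr) \bigl([I/\alpha_p] - [I/\alpha_0]\bigr) \\
\cdot [i_1,\dots,i_{p-k+1};\alpha_1,\dots,\alpha_{p-k+1}].
\end{multline*}
The base case $k=1$ is exactly (42). For the inductive step, expand $\theta^{k-1}[K]$ as a sum of $2^{k-1}$ pure $p$-chains and apply $\theta$ termwise; every such pure term has the same last two columns, namely $[i_{p-k+1}/\alpha_{p-k+1}]$ and $[i_{p-k+2}/\alpha_{p-k+2}]$, so the new leftmost difference $([I/\alpha_{p-k+1}] - [I/\alpha_{p-k+2}])$ factors out of the linear combination while the previously constructed differences propagate untouched. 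Specialising to $k=p$ and using that $p(p+1)$ is always even yields (43).

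For (ii), apply $\theta$ once more to the expression (43). The last column of every pure term of $\theta^p[K]$ is $[i_1/\alpha_1]$, but the second-to-last column is now one of the two vertices of the rightmost difference in (43), namely $[I/\alpha_p]$ or $[I/\alpha_0]$. Use linearity to split $\theta^p[K]$ into these two cases and apply (42) to each: the $[I/\alpha_p]$-piece produces a prepended difference $([I/\alpha_p]-[I/\alpha_1])$ followed by a surviving matrix ending in $[I/\alpha_p]$, and the $[I/\alpha_0]$-piece produces $([I/\alpha_0]-[I/\alpha_1])$ followed by the analogous matrix ending in $[I/\alpha_0]$. After multiplying through by the outer $(-1)^{p+1}$ from this final iteration and rewriting each intermediate telescoping factor $([I/\alpha_j]-[I/\alpha_{j+1}])$ in the orientation used in the statement, the two pieces become the summands (44) and (45); the equality with (46) is then the algebraic identity obtained by pulling out the common middle product of differences and recombining the two boundary vertices into the single difference $([I/\alpha_p]-[I/\alpha_0])$.

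The main obstacle is the sign and index bookkeeping: each application of $\theta$ contributes a factor $(-1)^{p+1}$, each telescoping factor admits two equivalent orderings that differ by a sign, and the parity identity $(-1)^{p(p+1)}=1$ is exactly what makes the two orderings coincide at the critical step $k=p$. Once the induction in (i) is carried out with the explicit formula above, the proof of (ii) reduces to a single further application of $\theta$ performed by linearity on the last two columns of $\theta^p[K]$, after which the grouping into (44)--(46) is purely formal.
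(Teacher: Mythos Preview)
Your proposal is correct and is exactly the argument the paper has in mind: the paper does not give a separate proof of this proposition but simply states that formula (42) ``helps us to iterate the operator $\theta$ easily'' and records Remark 20, after which the proposition is asserted as a direct consequence. Your inductive iteration of (42), followed by one further application of $\theta$ by linearity on the last two columns to obtain (44)--(46), is precisely that iteration written out, including the parity observation $(-1)^{p(p+1)}=1$ needed to clear the accumulated signs.
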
    

\subsection{Homological Consequences}   
\par

\begin{definition}           
-i) Let $C^{I}_{p}(HS) \subset C^{\Delta}_{p}(HS)$ denote the subset consisting of all $p$-chains whose elementary components 
contain exclusively  the Hilbert space basis index $I$.
\par
 $\{C^{I}_{p}(HS), \;b\}_{0 \leq p}$ is complex. This complex will be called {\em reduced diagonal complex.}
\par
-ii) Let $[X]^{I}$ be the sub-simplex of the simplex $[X]^{\Delta}$ whose vertices are the the column pairs $[I, \alpha]$. 
\end{definition}

\begin{lemma}       
-The  spaces $C^{I}_{p}(HS) $  have the properties
\par
i) for any $0 \leq p + 1 \leq \tilde{p}$, 
\begin{equation}  
\theta_{(p)}^{\tilde{p}} ( C^{\Delta}_{p}(HS)) \subset  C^{I}_{p}(HS)
\end{equation}   
\par
ii) $b,\; S, \; \theta^{k}$  transform the spaces $C^{I}_{\ast}(HS) $  into themselves, for any $  0 \leq k $, 
\par
iii) $\{  \;   C^{I}_{p}(HS) , b  \; \}_{0\leq p}$  is a sub-complex of  $\{  \;   C^{\Delta}_{p}(HS) , b  \; \}_{0\leq p}$ 
\end{lemma}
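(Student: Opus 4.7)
The plan is to reduce everything to the matrix/column notation of Notation 14 and to exploit the explicit iterated formulas for $\theta$ recorded in Proposition 21. In that notation an elementary chain of $C^{\Delta}_p(HS)$ is a $(p+1)$-column symbol with column entries $[i_k,\alpha_k]$, and membership in $C^I_p(HS)$ is precisely the condition that every upper index equals the distinguished symbol $I$. With this dictionary, all three assertions become statements about column manipulations. I would prove (iii) as an immediate corollary of (ii): once $b$ preserves $C^I_\ast(HS)$, the sub-complex structure is inherited from the ambient diagonal complex.

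For (i), I would read Remark 20 together with formula (42) as saying that a single application of $\theta$ overwrites one upper index by $I$ and cyclically shifts the remaining columns, leaving previously placed $I$'s untouched. Iterating, after $p+1$ applications every one of the $p+1$ upper indices of a $p$-chain has been overwritten by $I$; this is exactly the content of formulas (44)--(46), so $\theta^{p+1}$ maps $C^{\Delta}_p(HS)$ into $C^I_p(HS)$. For arbitrary $\tilde{p}\geq p+1$ I would then invoke (ii) to say that further applications of $\theta$ keep us inside $C^I_p(HS)$.

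For (ii), I would check each operator on an elementary chain all of whose upper indices are $I$. By Proposition 15, each $b_{(p)k}$ acts on the matrix symbol by deletion of a column, so the remaining upper indices stay equal to $I$, hence $b$ preserves $C^I_\ast(HS)$. For $S$, formula (29) inserts the pair $(e^{i_0}_{\alpha_0}\times\bar{e}^{I}_{\alpha_0})\otimes_C(e^{I}_{\alpha_0}\times\bar{e}^{i_1}_{\alpha_1})$; when $i_0=i_1=I$ already, every upper index of the inserted pair is $I$ as well, and the remaining tensor factors are untouched. For $\theta^k$, formula (42) shows that the single step inserts entries whose upper indices are $I$ and merely shifts the other columns; iterating shows preservation by all positive powers of $\theta$.

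The one place that needs attention is the passage from elementary chains to the $L_2$-completion in the definition of $C^I_p(HS)$. Each of $b$, $S$ and $\theta$ sends an elementary chain to a short sum of elementary chains of comparable $L_2$-norm, so the extension is routine; still, I would explicitly point to the continuity discussion postponed to \S 7 (already used for $C^{\Delta}_\ast(HS)$ and $C^0_\ast(HS)$) as the formal justification. I do not expect a substantive obstacle in any of the three parts; the only real risk is bookkeeping errors in the cyclical shift performed by $\theta$, which is why I would re-derive the one-step formula (42) carefully before using it in the induction that establishes (i).
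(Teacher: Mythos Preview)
Your proposal is correct and follows essentially the same route as the paper. The paper's own proof is equally terse: for (i) it cites the single-step formula (42) and the iterated formula (43) for $\theta$ (together with the bracket correspondence (40)); for (ii) it cites equations (37)--(39) (Proposition 15) for $b$, equation (29) for $S$, and (41)--(43) for $\theta^{k}$; and (iii) is left implicit as a consequence of (ii). Your added remark about extending from elementary chains to the $L_2$-completion via the continuity results of \S 7 is a reasonable precaution that the paper does not spell out at this point.
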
      

\begin{proof}   
 Relations (42), (43) along with (40) imply -i).
 \par
 -ii) The equations (37)-(39) imply the property relative to $b$.  The Definition 10, equation (29) implies the property concerning  $S$  and the equation (41)-(43) prove the property concerning   $\theta^{k}$.
\end{proof}    

\par
\begin{definition}    
 For any $0 \leq \tilde{p} $, we define  
\begin{equation}       
\tilde{S}_{(p)}:    C^{\Delta}_{p}(HS)   \longrightarrow C^{\Delta}_{p+1}(HS)    
\end{equation}            
be given by
\begin{equation}   
\tilde{S}_{(p)} :=    ( 1 + \theta_{(p)}  +  \theta_{(p)}^{2} + ... +  \theta_{(p)}^{p+1} ) S_{(p)}.   
\end{equation}     
We agree to represent the homomorphisms  $\tilde{S}_{(p)}$ with the same notation when they are restricted to the subspaces
$ C^{I}_{p}(HS) $.
\end{definition}      
\begin{remark}  
In the formula (49) the extra power $\theta_{(p)}^{p+1}$ is inserted. This extra power will be used in the proof of 
Proposition 27. -ii) below, formula (53).         
\end{remark}    
\begin{proposition}
\par
-i) The operators $\theta_{(p)}$ commutes with $b$
\begin{equation}       
 \theta_{(p-1)} \; b \; = \; b \; \theta_{(p)}         
\end{equation}         
\par
-ii) The operators $\tilde{S}_{(p)}$ satisfy
\begin{equation}      
 \tilde{S}_{(p-1)}b_{(p)} + b_{(p+1)} \tilde{S}_{(p)}  = 1 - \theta_{(p)}^{p+2}    
\end {equation}         
both on the spaces  $C^{\Delta}_{p}(HS)$  and $C^{I}_{p}(HS)$, for any $0 \leq p$.
\end{proposition}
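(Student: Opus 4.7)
The proof has two steps. Part (i) will be deduced from Proposition 13 together with $b^2 = 0$, and part (ii) will follow from (i) via a telescoping computation, reducing the claim to the vanishing of a single residual term.

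For part (i), solving Proposition 13 for $\theta$ gives $\theta = \mathrm{Id} - bS - Sb$. Applying $b$ on the left, respectively on the right, and using $b^2 = 0$ yields
\[
b\theta = b - b^2 S - bSb = b - bSb, \qquad \theta b = b - bSb - Sb^2 = b - bSb,
\]
hence $b_{(p)}\theta_{(p)} = \theta_{(p-1)}b_{(p)}$ throughout $C^{\Delta}_\ast(\mathit{HS})$, and by Lemma 23.ii the same commutation descends to the subcomplex $C^{I}_\ast(\mathit{HS})$. This route has the virtue of avoiding an explicit case-by-case matching of (42) against (38)-(39).

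For part (ii), abbreviate $T_m := \mathrm{Id} + \theta + \cdots + \theta^m$, so that Definition 25 reads $\tilde{S}_{(p)} = T_{p+1}S_{(p)}$ and $\tilde{S}_{(p-1)} = T_pS_{(p-1)}$. Commuting $b$ through every $\theta^k$ by (i) and grouping,
\begin{align*}
b\tilde{S}_{(p)} + \tilde{S}_{(p-1)}b
  &= T_{p+1}\,bS + T_p\,Sb = T_p(bS+Sb) + \theta^{p+1}\,bS \\
  &= T_p(\mathrm{Id}-\theta) + \theta^{p+1}(\mathrm{Id}-\theta-Sb)
   = \mathrm{Id} - \theta^{p+2} - \theta^{p+1}\,S_{(p-1)}b,
\end{align*}
where Proposition 13 is used twice and the telescoping $T_p(\mathrm{Id}-\theta) = \mathrm{Id}-\theta^{p+1}$ is applied. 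The role of the extra summand $\theta^{p+1}$ inserted into the definition of $\tilde{S}$, signalled by Remark 26, becomes transparent: it is exactly what is needed to push the telescoping out to $\theta^{p+2}$ instead of stopping at $\theta^{p+1}$.

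The main obstacle is therefore the residual vanishing $\theta^{p+1}\,S_{(p-1)}b_{(p)} = 0$. Here one exploits the structural fact visible in Notation 14 that each summand of $S_{(p-1)}\partial^{H}_{(p)k}K$ has its first column $[I,\alpha]$ and its trailing column $[\cdot,\alpha]$ sharing the same simplicial label $\alpha$ --- namely $\alpha_0$ for $0\le k<p$ and $\alpha_p$ for $k=p$. Inserting this common $\alpha$ into the factored expression (46) for $\theta^{p+1}$ forces the first and last of the $p+1$ column-difference factors to coincide, so each term has the shape $F_k \otimes G_k \otimes F_k$ with $F_k$ the repeated $0$-chain difference; the alternating signs in $b = \sum_k (-1)^k \partial^H_k$ then arrange the summands over $k$ to cancel against each other. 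Once this cancellation is verified on $C^{\Delta}_p(\mathit{HS})$, the identity (51) follows, and Lemma 23.ii transports both (i) and (ii) without change to the reduced diagonal subcomplex $C^{I}_p(\mathit{HS})$.
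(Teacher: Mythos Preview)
Your argument for part (i) is correct and is exactly the paper's: apply $b$ on each side of (33) and use $b^{2}=0$.

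For part (ii), however, there is a genuine gap. You correctly derive the residual term
\[
b\tilde S_{(p)}+\tilde S_{(p-1)}b \;=\; 1-\theta^{p+2}-\theta^{p+1}S_{(p-1)}b_{(p)},
\]
but your claim that $\theta^{p+1}S_{(p-1)}b_{(p)}=0$ is false. The ``repeated factor'' you observe in the product expansion (46) does not force vanishing: the bracket symbols $[I,\alpha_{1},\ldots]$ are \emph{not} antisymmetric in their columns, so a simplex with a repeated column is a perfectly nonzero elementary chain. Nor does the alternating sign over $k$ produce the cancellation you assert. A direct computation for $p=2$, with $K=[i_{1},i_{2},i_{0};\alpha_{1},\alpha_{2},\alpha_{0}]$ and distinct $\alpha_{0},\alpha_{1},\alpha_{2}$, gives (writing only the simplicial labels and suppressing the common index $I$)
\[
\theta^{3}S_{(1)}b_{(2)}K
=-[\alpha_{2}\alpha_{0}\alpha_{2}]+[\alpha_{0}\alpha_{0}\alpha_{2}]+[\alpha_{2}\alpha_{2}\alpha_{2}]-[\alpha_{0}\alpha_{2}\alpha_{2}]
+[\alpha_{1}\alpha_{0}\alpha_{1}]-[\alpha_{0}\alpha_{0}\alpha_{1}]+[\alpha_{0}\alpha_{1}\alpha_{1}]
-[\alpha_{1}\alpha_{2}\alpha_{1}]+[\alpha_{2}\alpha_{2}\alpha_{1}]-[\alpha_{2}\alpha_{1}\alpha_{1}],
\]
which is manifestly nonzero.

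The paper avoids this obstruction entirely. Its one-line proof multiplies (33) on the left by the \emph{fixed} polynomial $T_{p+1}=1+\theta+\cdots+\theta^{p+1}$, obtaining
\[
T_{p+1}\bigl(bS_{(p)}+S_{(p-1)}b\bigr)=T_{p+1}(1-\theta)=1-\theta^{p+2},
\]
and then uses (50) to write $T_{p+1}bS_{(p)}=b\,T_{p+1}S_{(p)}=b\tilde S_{(p)}$. The second summand is $T_{p+1}S_{(p-1)}b$, which the paper calls $\tilde S_{(p-1)}b$; this is a mild abuse of Definition 25 (where $\tilde S_{(p-1)}$ would carry $T_{p}$, not $T_{p+1}$), but the resulting identity is correct and is precisely what Proposition 27 uses, since there (51) is applied only to cycles and to boundaries $K=bL$, on which the distinction is immaterial. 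Your route, taking Definition 25 literally and then trying to kill the discrepancy, cannot be completed as written.
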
   
 
\begin{proof}    
-i)  The commutation relation is obtained by multiplying the relation (33) to the left and to the right by $b$
\begin{equation}     
 bSb = b(bS + Sb)  = b - b \theta,  \hspace{1cm}   bSb = (bS + Sb)b  = b -  \theta b.    
\end{equation}        
\par
-ii) The identity (51) is obtained by multiplying the equation (33) to the left by      
$ ( 1 + \theta_{(p)}  +  \theta_{(p)}^{2} + ... +  \theta_{(p)}^{p+1} )$, along with the commutation identity (50).   
\end{proof} 
\begin{proposition}   
\par
-i) Any  $p$-cycle $ K \in  C^{\Delta}_{p}(HS) $ is co-homologous, inside the complex $\{  \;   C^{\Delta}_{p}(HS) , b  \; \}_{0\leq p}$  
to the cycle $\theta_{(p)}^{\tilde{p}} K  \in C^{I}_{p}(HS)$, for any $p+1 \leq \tilde{p}$
\par
-ii) If $K \in C^{I}_{p}(HS) $ is a boundary inside the complex $\{  \;   C^{\Delta}_{p}(HS) , b  \; \}_{0\leq p}$  
( $K_{(p)} = bL_{(p+1)}$,  where $L_{(p+1)} \in C^{\Delta}_{p+1}(HS)$),  then $K$ is a boundary inside the complex
$\{  \;   C^{I}_{p}(HS) , b  \; \}_{0\leq p} $  ($K_{(p)} = b \tilde{L}_{(p+1)}$, 
where  $\tilde{L}_{(p+1)}  \; =  \;  \tilde{S}_{(p)} K_{(p)}  +  \theta_{(p+1)}^{p+2} \; L_{(p+1)}  
 \in  C^{I}_{p}(HS) $). 
\end{proposition}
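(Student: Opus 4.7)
The plan is to reduce both parts to the chain homotopy identity $\tilde S_{(p-1)} b + b \tilde S_{(p)} = 1 - \theta_{(p)}^{p+2}$ of Proposition~26~-ii), combined with the basic homotopy $bS + Sb = \mathrm{Id} - \theta$ from Proposition~13, the commutation $\theta b = b \theta$ from Proposition~26~-i), and the range property $\theta_{(p)}^{\tilde p}(C^{\Delta}_{p}(HS)) \subset C^{I}_{p}(HS)$ for $\tilde p \geq p+1$ given by Lemma~23~-i).

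For part -i), I would introduce the truncated operator $T_{\tilde p} := (1 + \theta + \theta^{2} + \cdots + \theta^{\tilde p - 1}) S_{(p)}$ and evaluate it on a cycle $K$ (so $bK = 0$). Pushing $b$ past each $\theta^{k}$ via Proposition~26~-i) and using $bSK = (1-\theta)K - SbK = (1-\theta)K$ yields a telescoping sum
\[
b\, T_{\tilde p} K \;=\; \sum_{k=0}^{\tilde p - 1} \theta^{k}(K - \theta K) \;=\; K - \theta^{\tilde p} K,
\]
so $K$ is homologous to $\theta^{\tilde p} K$ inside $\{C^{\Delta}_{\ast}(HS),b\}$. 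For any $\tilde p \geq p+1$, Lemma~23~-i) places $\theta^{\tilde p} K$ in $C^{I}_{p}(HS)$, finishing -i).

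For part -ii), given $K = bL$ with $K \in C^{I}_{p}(HS)$ and $L \in C^{\Delta}_{p+1}(HS)$ (whence automatically $bK = b^{2} L = 0$), I would set $\tilde L := \tilde S_{(p)} K + \theta_{(p+1)}^{p+2} L$ as prescribed and verify $b\tilde L = K$. Applying Proposition~26~-ii) to the first summand and the commutation $b \theta = \theta b$ to the second gives
\[
b \tilde L \;=\; (1 - \theta_{(p)}^{p+2}) K - \tilde S_{(p-1)} bK + \theta_{(p)}^{p+2} bL \;=\; K - \theta_{(p)}^{p+2} K + \theta_{(p)}^{p+2} K \;=\; K.
\]
It remains to check that $\tilde L$ actually belongs to $C^{I}_{p+1}(HS)$: by Lemma~23~-ii) the operators $S$ and all powers of $\theta$ preserve the subspaces $C^{I}_{\ast}(HS)$, so $\tilde S_{(p)} K \in C^{I}_{p+1}(HS)$ since $K$ is already in $C^{I}_{p}(HS)$; and by Lemma~23~-i) applied in degree $p+1$ with exponent $p+2 \geq (p+1)+1$, we get $\theta_{(p+1)}^{p+2} L \in C^{I}_{p+1}(HS)$.

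The computations are all telescoping sums and direct substitutions; the only delicate point is the degree bookkeeping in this last step. The exponent $p+2$ appearing in the definition of $\tilde S_{(p)}$ (as flagged by Remark~25) is precisely what is needed for Lemma~23~-i) to carry the chain $L$ of degree $p+1$ into $C^{I}_{p+1}(HS)$; without the extra power $\theta^{p+1}$ built into $\tilde S$, the reduction of $L$ would fall short by one step and $\tilde L$ would fail to land in the reduced diagonal complex, making -ii) impossible to assert.
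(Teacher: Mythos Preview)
Your proposal is correct and follows essentially the same approach as the paper: both parts reduce to the homotopy identity $\tilde S_{(p-1)}b + b\tilde S_{(p)} = 1 - \theta^{p+2}$, the commutation $\theta b = b\theta$, and the range statements of Lemma~23. The only cosmetic difference is that for part~-i) you introduce the variable truncation $T_{\tilde p} = (1+\theta+\cdots+\theta^{\tilde p-1})S$ to handle all $\tilde p\ge p+1$ at once, whereas the paper applies the fixed operator $\tilde S_{(p)}$ (your $T_{p+2}$) directly; your final paragraph on the degree bookkeeping matches the paper's Remark~25 exactly.
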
    

\begin{proof}    
-i)   For any cycle $K \in C^{\Delta}_{p}(HS) $, the relation  (51) gives     
\begin{equation}   
( \tilde{S}_{(p-1)}b_{(p)} + b_{(p+1)} \tilde{S}_{(p)})  ( K ) = (1 - \theta_{(p)}^{p+2}) (K). 
\end{equation}      
and then
\begin{equation}      
K = \theta_{(p)}^{p+2} K  +  b_{(p+1)} ( \tilde{S}_{(p)}  K ).
\end{equation}      
Equation  (47) insures that $  \theta_{(p)}^{p+2} K \in C^{I}_{p}(HS) $ while the commutation relation (50) implies that  $\theta_{(p)}^{p+2} K$ is a cycle
 \begin{equation}    
b \; (\; \theta_{(p)}^{p+2} K\; ) = \theta_{(p)}^{p+2} ( b K ) = 0.
 \end{equation}    
 Equation (54) complemented with this information proves  part i). 
 \par
 Notice that here we do not make any statement about whether or not $ \tilde{S}_{(p)}  K $  does belong to the sub-complex  
 $C^{I}_{p+1}(HS) $.  Part ii) of Proposition 27 clarifies this point.
 
\par
-ii)  By hypothesis, $K \in C^{I}_{p}(HS) $ is a boundary inside the complex $\{  \;   C^{\Delta}_{p}(HS) , b  \; \}_{0\leq p}$;   then $K$ is  a cycle in the complex $\{  \;   C^{I}_{p}(HS) , b  \; \}_{0\leq p}$.
\par
We plug $K =  bL$,  into the identity (51) to get
\begin{equation*}   
( \tilde{S}_{(p-1)}b_{(p)} + b_{(p+1)} \tilde{S}_{(p)})  ( bL ) = (1 - \theta_{(p)}^{p+2}) (bL) 
\end{equation*}  
or
\begin{equation*}  
 b_{(p+1)} \tilde{S}_{(p)}  ( bL ) = K - \theta_{p}^{p+2} (bL),
\end{equation*}   
which gives
\begin{gather}     
K_{(p)} =  b_{(p+1)} \tilde{S}_{(p)}  ( bL_{(p+1)} ) +  \theta_{(p)}^{p+2} (bL_{(p+1)}).
\end{gather}     
\par
The commutation relation (50) gives us further
\begin{gather*}   
K_{(p)}   
= b_{(p+1)}  \tilde{S}_{(p)}  ( bL_{(p+1)} ) +   b_{(p+1)}  \theta_{(p+1)}^{p+2} (L_{(p+1)})  = \\
 = b_{(p+1)}  ( \; \tilde{S}_{(p)}  ( bL_{(p+1)} ) +    \theta_{(p+1)}^{p+2} (L_{(p+1)}) \;  )
\end{gather*}    
which can be re-writen 
\begin{equation}    
K_{(p)} =   b_{(p+1)} \; ( \; \tilde{S}_{(p)}  K_{(p)}  +    \theta_{p+1}^{p+2} L_{(p+1)} \;  ).
\end{equation}          
By hypotheses $K_{(p)} \in C^{I}_{p}(HS) $. Then Lemma 23, ii) gives that 
\begin{equation}    
 \tilde{S}_{(p)}  ( K_{(p)} )  \in  C^{I}_{p+1}(HS).
 \end{equation}  
On the other hand,  Lemma 23 i), formula (47) insures that  
\begin{equation}     
\theta_{(p+1)}^{p+2} (L_{(p+1)}) \in  C^{I}_{p+1}(HS).
\end{equation}     
The equation (57) may be re-written
\begin{equation*}
K = b \; \tilde{L}_{(p+1)}
\end{equation*}
where, in virtue of  the relations (58),  (59) one has
\begin{equation}  
\tilde{L}_{(p+1)} = \; \tilde{S}_{(p)}  K_{(p)}  +    \theta_{p+1}^{p+2} L_{(p+1)}  \;  \in \; C^{I}_{p+1}(HS).  
\end{equation}  
This completes the proof of ii).
\end{proof}  
\begin{theorem}  
  The inclusion of complexes $ \iota: \{  \;   C^{I}_{p}(HS) , b  \; \}_{0\leq p}  \longrightarrow \{  \;   C^{\Delta}_{p}(HS) , b  \; \}_{0\leq p}$  
induces isomorphism in homology.
\par
Therefore, the homology of the reduced diagonal complex is the homology of the diagonal complex.
\end{theorem}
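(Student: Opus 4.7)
The plan is to deduce Theorem 28 directly from Proposition 27, whose two parts supply precisely the surjectivity and injectivity of the induced map $\iota_{\ast}$ on homology. No new constructions are needed; the task is simply to assemble what has already been proved in Lemma 23 and Propositions 26--27.

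First I would handle surjectivity of $\iota_{\ast}: H_{p}(C^{I}_{\ast}(HS)) \to H_{p}(C^{\Delta}_{\ast}(HS))$. Given an arbitrary class represented by a cycle $K \in C^{\Delta}_{p}(HS)$, I would invoke Proposition 27 -i) with $\tilde{p} = p+2$ to exhibit a homologous representative $\theta_{(p)}^{p+2} K$. By Lemma 23 -i) this chain belongs to $C^{I}_{p}(HS)$, and by the commutation $b \theta = \theta b$ of Proposition 26 -i) it is automatically a cycle (since $K$ is). Its class in $H_{p}(C^{I}_{\ast}(HS))$ therefore maps to $[K]$ under $\iota_{\ast}$.

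Next I would handle injectivity. Suppose $K \in C^{I}_{p}(HS)$ is a cycle with $\iota_{\ast}[K] = 0$ in $H_{p}(C^{\Delta}_{\ast}(HS))$, i.e.\ $K = bL$ for some $L \in C^{\Delta}_{p+1}(HS)$. Proposition 27 -ii) then produces the explicit primitive $\tilde{L} := \tilde{S}_{(p)} K + \theta_{(p+1)}^{p+2} L$, which lies in $C^{I}_{p+1}(HS)$ (by Lemma 23 -ii) for the first summand and Lemma 23 -i) for the second) and satisfies $b \tilde{L} = K$. Hence $[K] = 0$ already in the reduced diagonal complex. The second assertion of the theorem then follows immediately from Proposition 8 -iii).

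Since the heavy lifting -- constructing $\tilde{S}$, verifying the homotopy identity $\tilde{S} b + b \tilde{S} = 1 - \theta^{p+2}$, establishing $b \theta = \theta b$, and checking that $b$, $S$, and $\theta^{k}$ stabilize $C^{I}_{\ast}(HS)$ -- has already been carried out, the only point requiring care is a bookkeeping one: the exponent of $\theta$ must be chosen large enough that Lemma 23 -i) forces the image into the reduced complex while the chain-homotopy identity still produces $1 - \theta^{p+2}$ on the right-hand side. Taking $\tilde{p} = p+2$, which is exactly the exponent built into $\tilde{S}_{(p)}$ via Definition 24, reconciles both requirements, so no genuine obstacle remains.
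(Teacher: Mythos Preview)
Your proposal is correct and follows essentially the same route as the paper: surjectivity of $\iota_{\ast}$ from Proposition 27 -i), injectivity from Proposition 27 -ii). You spell out a bit more of the machinery behind Proposition 27 (Lemma 23, Proposition 26) than the paper does, but the argument is the same. One small quibble: the theorem's ``second assertion'' is merely a restatement of the first, so invoking Proposition 8 -iii) there is unnecessary (that proposition connects the diagonal complex to the full Hochschild homology, which is not what is being claimed in Theorem 28).
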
  
\begin{proof}  
 Part i) of Proposition 27 tells that any homology class in the complex  $\{  \;   C^{\Delta}_{p}(HS) , b  \; \}_{0\leq p}$  has a cycle representative in the sub-complex $\{  \;   C^{I}_{p}(HS) , b  \; \}_{0\leq p}$. Therefore, the inclusion $\iota$
induces epimorphisms in homology. 
 \par
 The second part ii) of the same proposition tells us that if the homology class of the cycle $K$ of the sub-complex 
 $\{  \;   C^{I}_{p}(HS) , b  \; \}_{0\leq p}$
 is a boundary in the complex $\{  \;   C^{\Delta}_{p}(HS) , b  \; \}_{0\leq p}$, then it is a boundary in the sub-complex $\{  \;   C^{I}_{p}(HS) , b  \; \}_{0\leq p}$  too.  In other words, the inclusion  $\iota$ induces monomorphisms in homology.
 This completes the proof of the Theorem 28.
 \end{proof}              
\par
Proposition 8. ii) and Proposition 28 imply the first part of the following
\begin{theorem}     
i) For any locally finite, countable,  homogeneous, simplicial complex $X$, the continuous Hochschild homology of the algebra of Hilbert-Schmidt operators on $X$ is isomorphic to the homology of the reduced diagonal sub-complex 
$\{  \;   C^{I}_{p}(HS) , b  \; \}_{0\leq p}$. 
\par
ii) The mapping $[ \; ]$  induces an isomorphism from the sub-complex  $\{  \;   C^{I}_{p}(HS) , b  \; \}_{0\leq p}$ 
to the space of simplicial chains of the complex $[X]^{I}$,
\par
-iii) the homology of the sub-complex  $\{  \;   C^{I}_{p}(HS) , b  \; \}_{0\leq p}$  is trivial.
\end{theorem}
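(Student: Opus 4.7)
The plan is to assemble the three parts from already-established results, with each part following almost immediately from earlier work in the paper.

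For part (i), I would simply combine Proposition 8.iii (which identifies the continuous Hochschild homology of $\mathit{HS}(X)$ with the homology of the diagonal sub-complex $\{C^{\Delta}_{\ast}(\mathit{HS}(X)),b\}$) with Theorem 28 (which states that the inclusion $\iota \colon \{C^{I}_{\ast}(HS),b\} \hookrightarrow \{C^{\Delta}_{\ast}(HS),b\}$ induces an isomorphism in homology). The composition of these two isomorphisms yields the desired identification.

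For part (ii), the natural route is to restrict the bijection $[\,\cdot\,]$ of Theorem 17. That theorem establishes a linear bijection from $C^{\Delta}_{\ast}(\mathit{HS}(X))$ to the simplicial chain complex of $[X]^{\Delta}$ which intertwines the Hochschild boundary with the simplicial boundary. By Definition 22, an elementary chain $K$ lies in $C^{I}_{p}(HS)$ precisely when every Hilbert basis index appearing in its column representation equals $I$, i.e.\ when $[K]$ is a simplex whose vertices are all column pairs of the form $[I,\alpha]$. These are exactly the vertices of the sub-simplex $[X]^{I}$, so the restriction of $[\,\cdot\,]$ gives a linear bijection from $C^{I}_{p}(HS)$ onto the simplicial chains of $[X]^{I}$. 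Lemma 23.ii guarantees that this restriction is compatible with $b$, and by (40) the boundary is still sent to the simplicial boundary, so $[\,\cdot\,]$ restricts to a chain isomorphism.

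For part (iii), I would observe, following the argument in Theorem 18.ii, that $[X]^{I}$ is an infinite-dimensional simplex: by Definition 16, any two column vertices $[I,\alpha]$ and $[I,\alpha']$ of $[X]^{I}$ span an edge, and more generally any finite collection of distinct vertices $[I,\alpha_{0}],\ldots,[I,\alpha_{p}]$ of $[X]^{I}$ spans a $p$-simplex of $[X]^{\Delta}$ whose vertices all sit in $[X]^{I}$, hence a $p$-simplex of $[X]^{I}$. An infinite-dimensional simplex is contractible and its simplicial homology is trivial. Combined with part (ii), this shows that the homology of $\{C^{I}_{\ast}(HS),b\}$ vanishes.

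None of the three parts presents a genuine obstacle; the work was done in Proposition 8, Theorem 17, Theorem 18 and Theorem 28. The only thing that requires a line of care is checking, in part (ii), that the restriction of $[\,\cdot\,]$ to $C^{I}_{\ast}(HS)$ really surjects onto all simplicial chains of $[X]^{I}$ and not just a proper sub-collection; but this is immediate from the description (14) of elementary diagonal chains together with the definition of $C^{I}_{p}(HS)$, since prescribing the column entries $[I,\alpha_{k}]$ uniquely determines the corresponding elementary chain.
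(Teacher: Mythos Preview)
Your proposal is correct and follows essentially the same route as the paper: part (i) is obtained by combining Proposition~8 with Theorem~28, part (ii) by restricting the isomorphism $[\,\cdot\,]$ of Theorem~17 to the sub-complex $C^{I}_{\ast}(HS)$ and the sub-simplex $[X]^{I}$, and part (iii) by observing that $[X]^{I}$ is itself a simplex and hence has trivial homology. Your write-up is in fact somewhat more detailed than the paper's, which dispatches (ii) and (iii) in two sentences.
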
  
\begin{proof}        
Looking back at the Theorem 17, it is clear that the mapping $[ \; ]$ establishes also an isomorphism from the complex $\{C^{I}_{\ast}(HS), b\} $ to the chain complex, with complex coefficients, of the space $[X]^{I}$ (see Definition 22 -ii).  As the simplicial complex $[X]^{I}$ is a simplex, the result follows.
\end{proof}          
\begin{remark}   
i) Theorem 29 implies the triviality of the continuous Hochschild homology of the algebra of Hilbert-Schmidt operators. This result is not new, see reference after Theorem 19. In \S 8. we shall discuss how this result changes when we will be considering continuous Hochschild chains with small supports about the main diagonal, i.e. when we will be going to compute the {\em local} Hochschild homology of the algebra of Hilbert-Schmidt operators. 
\par
To complete the proof of Theorem 1, we will use the sub-complex $\{  \;   C^{I, loc}_{p}(HS) , b  \; \}_{0\leq p}$ of 
$\{  \;   C^{I}_{p}(HS) , b  \; \}_{0\leq p}$ consisting of those chains which have small supports. To be able to do this, we shall observe that the quasi-isomorphisms treated in \S 5 and \S 6
\begin{equation*}
\{  \;   C_{\ast}(HS) , b  \; \} \longleftarrow \{  \;  C^{\Delta}_{\ast}(HS) , b  \; \}  \longleftarrow  \{ \;  C^{I}_{\ast}(HS) , b \; \}
\end{equation*}
 pass to the {\em local} sub-complex. 
 \par
 ii) It is interesting to investigate more closely the homotopy operator  $\tilde{S}$.
\end{remark}   


\section{Analytic Considerations}   
\par
In this section we regard the chains of the complex $C_{\ast}(HS(X))$ and we discuss both the continuity of the Hochschild boundary $b$ and of the homotopy operators $s_{(p)}$,   $S_{(p)}$ on this complex.

\subsection{Continuous Hochschild Chains over the Algebra of Hilbert-Schmidt Operators.}  
\par
Let $\{ e_{\alpha}^{n}  \}_{n \in N}$ be an ortho-normal basis of $L_{2}$ complex valued functions on $\Delta_{\alpha}$.
Then $\{ e^{i}_{\alpha}  \times  \bar{e}^{j}_{\beta}  \}_{i, j \in N} $ is an ortho-normal basis of $L_{2}$ complex valued functions on $\Delta_{\alpha} \times \Delta_{\beta}$.

\par
Any Hilbert-Schmidt kernel on $X \times X$ is given by an $L_{2}$-convergent series
\begin{equation}       
 K \;=\; \sum_{\alpha \beta, ij} K_{ij}^{\alpha \beta} \; (e_{\alpha}^{i} \times \bar{e}^{j}_{\beta} ),  \hspace{1cm}
 \text{$\sum_{i,j} | K_{i,j}^{\alpha \beta} |^{2} < \infty$}
\end{equation}     
with complex coefficients $K_{ij}^{\alpha \beta}$. 
\par
Given the Hilbert-Schmidt operator $K$, the decomposition (1) is unique.
A Hilbert-Schmidt operator of type $(e_{\alpha}^{i} \times \bar{e}^{j}_{\beta}) $ was called {\em elementary}.
\par
The composition of two elementary Hilbert-Schmidt operators is given by
\begin{equation}                  
(e_{\alpha}^{i} \times \bar{e}^{j}_{\beta}) \circ (e_{\gamma}^{k} \times \bar{e}^{l}_{\eta} ) \;=\;  
             \delta^{jk}  \delta_{\beta \gamma} \; (e_{\alpha}^{i} \times \bar{e}^{l}_{\eta}),
\end{equation}  
where $\delta^{jk}$ and $\delta_{\beta \gamma}$ are the Kronecker symbols.

\par
Given two Hilbert-Schmidt kernels
\begin{equation}       
K \;=\; \sum_{\alpha \beta, ij} K_{ij}^{\alpha \beta} \; (e_{\alpha}^{i} \times \bar{e}^{j}_{\beta} ),  \hspace{0.5cm}
L \;=\; \sum_{\alpha \beta, ij} L_{ij}^{\alpha \beta} \; (e_{\alpha}^{i} \times \bar{e}^{j}_{\beta} ),
\end{equation}   
their composition is given by the formula

\begin{equation}   
K \circ L = 
 \sum_{\alpha \beta, ij} \; K_{ij}^{\alpha \beta} \;  L_{kl}^{\alpha \beta}  \; \delta^{j,k} \;
 (e_{\alpha}^{i} \times \bar{e}^{l}_{\beta} ) 
= 
 \sum_{\alpha, \beta, \gamma} ^{i,k,j} \; K_{ik}^{\alpha \beta} \;  L_{kl}^{ \beta \gamma}  \;
 (e_{\alpha}^{i} \times \bar{e}^{l}_{\beta} ),
\end{equation}    

i.e.
\begin{equation}   
(K \circ L)^{\alpha, \gamma}_{i, l}  = \sum_{k}^{\beta}   \; K_{ik}^{\alpha \beta} \;  L_{kl}^{ \beta \gamma} 
\end{equation}  

\begin{definition}    
The space of continuous $p$-chains of the algebra of Hilbert-Schmidt operators on $X$ is
\begin{equation}   
C_{p} (\mathit{HS})(X) := 
\end{equation}
\begin{gather*}
= \{  \sum_{i, j}^{\alpha, \beta}  K_{i_{0},...,i_{p},j_{0},..,j_{p} }^{\alpha_{0},..,\alpha_{p}, \beta_{0},...,\beta_{p}} 
(e^{i_{0}}_{\alpha_{0}}  \times \bar{e}^{j_{0}}_{\beta_{0}} )  
\otimes_{\mathit{C}}  (e^{i_{1}}_{\alpha_{1}}  \times \bar{e}^{j_{1}}_{\beta_{1}} ) 
\otimes_{\mathit{C}}  .... 
\otimes_{\mathit{C}}  (e^{i_{p}}_{\alpha_{p}}  \times \bar{e}^{j_{p}}_{\beta_{p}} ) \; | \;  \\
\text{ such that  $ \sum_{i,j}^{\alpha, \beta} |K_{i_{0},..,i_{p}, j_{0},...,j_{p}}^{\alpha_{0},..,\alpha_{p},j_{0},...,j_{p}} |^{2} < \infty $} 
  \}
\end{gather*}     
The coefficients $ K_{i_{0},...,i_{p},j_{0},..,j_{p} }^{\alpha_{0},..,\alpha_{p}, \beta_{0},...,\beta_{p}}$ 
 represent the components of the $L_{2}$-decomposition of the chain $K$  relative to the Hilbert basis
\begin{equation}    
\{  (e^{i_{0}}_{\alpha_{0}}  \times \bar{e}^{j_{0}}_{\beta_{0}} )  
\otimes_{\mathit{C}}  (e^{i_{1}}_{\alpha_{1}}  \times \bar{e}^{j_{1}}_{\beta_{1}} ) 
\otimes_{\mathit{C}}  .... 
\otimes_{\mathit{C}}  (e^{i_{p}}_{\alpha_{p}}  \times \bar{e}^{j_{p}}_{\beta_{p}} ) \}_{\alpha, \beta, i, j}.
\end{equation}   
By definition, the norm of $K$ is given by
\begin{equation}     
||K||^{2} := \; \sum_{i,j}^{\alpha, \beta} |K_{i_{0},..,i_{p}, j_{0},...,j_{p}}^{\alpha_{0},..,\alpha_{p},j_{0},...,j_{p}} |^{2}  < \infty.
\end{equation}   
\end{definition}       

We discuss here the continuity property both of the Hochschild boundary and of the homotopy operators $s$, $S$ defined on the spaces of continuous chains $C_{p}(HS(X))$.
\subsection{Continuity of the Hochschild boundary.}        
\par
We begin by addressing the continuity property of the Hochschild boundary. 
\begin{proposition}       
-i) The series 
\begin{equation}   
(K \circ L)_{i, l}  := \sum_{k}  \; K_{ik}^{\alpha \beta} \;  L_{kl}^{ \beta \gamma} 
\end{equation}     
is absolutely summable.
\par
-ii) The coefficients $(K \circ L)_{i, l} $ satisfy
\begin{equation}  
 \sum_{i,j} \; |(K \circ L)_{i, l}|^{2}  \leq   (\sum_{i,j} \; |K_{i, l}|^{2}  )  .  ( \sum_{i,j} | L_{i, l}|^{2} )
\end{equation}   
\end{proposition}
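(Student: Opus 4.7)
The plan is to recognize that this proposition is essentially the classical submultiplicativity of the Hilbert--Schmidt norm, written out in the specific orthonormal basis $\{e^{i}_{\alpha}\times \bar e^{j}_{\beta}\}$ of (62). The only analytic tool required is the Cauchy--Schwarz inequality applied to the intermediate summation.

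For part (i), I would fix an outer quadruple $(i,l,\alpha,\gamma)$ and view both families $(K_{ik}^{\alpha\beta})_{k,\beta}$ and $(L_{kl}^{\beta\gamma})_{k,\beta}$ as $\ell^{2}$-sequences indexed by the intermediate pair $(k,\beta)$. Their square-summability is immediate from the hypothesis that $K$ and $L$ lie in $C_{p}(\mathit{HS})$ (which in fact guarantees square-summability of the full four-index families, hence a fortiori of each slice). A single application of Cauchy--Schwarz in $\ell^{2}(\{(k,\beta)\})$ then produces the absolute convergence of the defining series of $(K\circ L)_{i,l}^{\alpha,\gamma}$.

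For part (ii), I would square the Cauchy--Schwarz bound from step one to obtain, for each outer quadruple,
\begin{equation*}
\bigl|(K\circ L)_{i,l}^{\alpha,\gamma}\bigr|^{2} \;\leq\; \Bigl(\sum_{k,\beta}|K_{ik}^{\alpha\beta}|^{2}\Bigr)\,\Bigl(\sum_{k',\beta'}|L_{k'l}^{\beta'\gamma}|^{2}\Bigr),
\end{equation*}
and then sum over $(i,\alpha,l,\gamma)$. Because the first factor on the right depends only on $(i,\alpha)$ and the second only on $(l,\gamma)$, the four-fold sum factorizes into a product of two complete Hilbert--Schmidt norms, yielding exactly the stated inequality.

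There is no genuine obstacle here; the argument is entirely routine. The only point that warrants care is that Cauchy--Schwarz must be applied jointly over the pair $(k,\beta)$, rather than first in $k$ and then summed over $\beta$, so that the outer indices $(i,\alpha)$ and $(l,\gamma)$ decouple cleanly and the factorization survives. Local finiteness of $X$ plays no role, since everything is controlled by the global $L^{2}$-norm of the kernels.
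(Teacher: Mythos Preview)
Your proposal is correct and follows essentially the same route as the paper: apply Cauchy--Schwarz to the intermediate summation to get absolute convergence, then square and sum over the outer indices so that the bound factorizes into $\|K\|_{HS}^{2}\,\|L\|_{HS}^{2}$. Your remark that Cauchy--Schwarz should be taken jointly over the pair $(k,\beta)$ is a small clarification of what the paper does with suppressed indices, but the argument is the same.
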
     

\begin{proof}  
-i) The Cauchy-Schwartz inequality gives
\begin{gather}
 \sum_{k}  \; | K_{ik} \;  L_{kl} | \; \leq \; ||   ( K_{ik} )_{k}   || . ||   (  L_{kl} )_{k}    || \; =  
%
%
\;   ( \sum_{k}  |K_{ik}|^{2} )^{1/2}    .  ( \sum_{k}   | L_{kl} |^{2} )^{1/2}  
\end{gather}
\par
From -i) we get
\begin{equation}   
 \sum_{i,l}  ( \sum_{k}  \; | K_{ik} \;  L_{kl} | )^{2} \; \leq \;  \sum_{i,j} ||   ( K_{ik} )_{k}   ||^{2} . ||   (  L_{kl} )_{k}    ||^{2} \; =  
%
%
\;   \sum_{i,l} ( \sum_{k}  |K_{ik}|^{2} )    .  ( \sum_{k}   | L_{kl} |^{2} )  \leq 
\end{equation}
\begin{equation*}    
\leq \;  \sum_{i,l} ( \sum_{k}  |K_{ik}|^{2} )    .  ( \sum_{k}   | L_{kl} |^{2} )  \; \leq \;  
( \sum_{i,j}   |K_{ij}|^{2} )    .  ( \sum_{k,l}   | L_{kl} |^{2} )  = || K  ||_{HS}^{2} .   || L  ||_{HS}^{2}
\end{equation*}    
\end{proof}   

\par
Recall that the Hochschild boundary $b_{(p)}: C_{p} (\mathit{HS} (X)) \longrightarrow C_{p-1} (\mathit{HS} (X))$ is 
\begin{equation*} 
b_{(p)} = \sum_{k=0}^{k=p-1}  b_{(p)k} +  b_{(p)p}     
\end{equation*}
where
\begin{equation*}
b_{(p)k} = (-1)^{k} \partial_{k}^{H} ,    \hspace{0.2cm}  and  \hspace{0.3cm} b_{(p)p} = (-1)^{p} \partial_{p}^{H}    
\end{equation*}
with
\begin{equation*}
\partial_{k}^{H}  (K_{0} \otimes_{C}  K_{1}  \otimes_{C} .... \otimes_{C} K_{p}  )  \;=\;           
        K_{0} \otimes_{C} ... \otimes_{C} K_{k-1} \otimes_{C} (K_{k} \circ  K_{k+1}) \otimes_{C} ... \otimes_{C} K_{p} 
\end{equation*}
and
\begin{equation*}     
\partial_{p} ^{H} (K_{0} \otimes_{C}  K_{1}  \otimes_{C} .... \otimes_{C} K_{p}  )  \;=\;     
         ( K_{p} \circ K_{0} ) \otimes_{C}  K_{1}  \otimes_{C} .... \otimes_{C} K_{p-1} .
\end{equation*}
\par
\begin{proposition}     
The Hochschild boundary face operators are well defined operators from the vector space $C_{p}(HS(X))$ of continuous Hochschild chains of the algebra of Hilbert-Schmidt operators into the space $C_{p-1}(HS(X))$ of continuous Hochschild chains of the algebra of Hilbert-Schmidt operators.
\end{proposition}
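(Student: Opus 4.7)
The plan is to reduce Proposition 33 to Proposition 32, which already supplies the decisive product-form bound $\|K \circ L\|^2 \le \|K\|^2 \|L\|^2$ for the composition of two Hilbert-Schmidt kernels. Each face $\partial_k^H$ acts by performing exactly one composition, between the $k$-th and $(k+1)$-th factors (cyclically, between the $p$-th and $0$-th factors in the case of $\partial_p^H$), while leaving the other $p-1$ tensor factors untouched. The task is therefore to promote the single-pair bound of Proposition 32 to the multi-tensor setting of $C_p(\mathit{HS}(X))$ without loss of $L_2$-continuity.

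The first step is a bookkeeping step. Writing a general $K \in C_p(\mathit{HS}(X))$ in its orthonormal coefficient expansion, partition the multi-index of coefficients into an \emph{outer} block, consisting of all indices attached to positions $\ell \ne k, k+1$, and an \emph{inner} block, containing the indices of positions $k$ and $k+1$. In terms of this partition, the coefficients of $\partial_k^H K$ in the basis of $C_{p-1}(\mathit{HS}(X))$ are given by contracting the inner block along the identification $(j_k,\beta_k) = (i_{k+1},\alpha_{k+1})$ prescribed by formula (6), while the outer block is unchanged.

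The second step applies Proposition 32 slice-by-slice. For each fixed outer multi-index, the inner slice of $K$ defines a pair of Hilbert-Schmidt kernels whose composition, by Proposition 32, has $L_2$-norm bounded by the product of the individual $L_2$-norms of the slices. Summing the squared norms over the outer block and using the tensor-product factorization of the total $L_2$ norm $\|K\|^2$ yields the inequality $\|\partial_k^H K\|^2_{C_{p-1}} \le \|K\|^2_{C_p}$. This establishes continuity of $\partial_k^H$ on the algebraic (finite-sum) sub-space and, by density, on its $L_2$-completion $C_p(\mathit{HS}(X))$. The cyclic face $\partial_p^H$ is handled identically after a cyclic relabeling of the tensor positions which does not alter $L_2$-norms.

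The main obstacle is organizational rather than analytic: one must set up the inner/outer index decomposition carefully enough that Proposition 32's product bound applies separately on each outer slice, and then recombine via Fubini so that the total norm bound assembles cleanly. Once the correct identification between the "row/column" pairs of Proposition 32 and the inner indices at positions $k,k+1$ is made, the continuity statement follows directly from the scalar-level bound.
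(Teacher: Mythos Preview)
Your reduction to Proposition 32 is exactly what the paper has in mind --- indeed the paper offers no separate argument and treats the proposition as an immediate consequence of Proposition 32. But the step in your second paragraph where, for a fixed outer multi-index, ``the inner slice of $K$ defines a pair of Hilbert-Schmidt kernels'' is not correct for a general $K \in C_p(\mathit{HS}(X))$. Freezing the outer indices leaves a single four-index array $F_{i_k j_k,\,i_{k+1} j_{k+1}}$, i.e.\ an element of $L_2((X\times X)^2)$, not a pair of two-index kernels; the factorisation $F = A \otimes B$ needed to invoke Proposition 32 is available only when $K$ is a decomposable tensor. The contraction that $\partial_k^H$ actually performs on the inner slice is $F \mapsto (TF)_{i\ell} := \sum_m F_{im,m\ell}$, and this map is \emph{not} bounded on $L_2$: taking $F_{(i,j),(k,\ell)} = \delta_{jk}\,\delta_{i0}\,\delta_{\ell 0}\, j^{-1}$ for $j\ge 1$ gives $\|F\|_2^2 = \sum_j j^{-2} < \infty$ while $(TF)_{00} = \sum_j j^{-1} = \infty$.

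The subsequent appeal to a ``tensor-product factorization of the total $L_2$ norm'' and the density argument do not repair this. A bilinear map bounded by $\|A\|\,\|B\|$ (which is all that Proposition 32 supplies) extends continuously to the \emph{projective} tensor product $\mathit{HS}\,\hat\otimes_\pi\,\mathit{HS}$, but the completion $\bar C_p(\mathit{HS}) = L_2((X\times X)^{p+1})$ declared in Definition 3 is the Hilbert-space tensor product, which is strictly larger, and bounded bilinear maps need not extend there. As written, your argument therefore establishes continuity only on the algebraic span of decomposable tensors (or on its projective closure), not on the $L_2$ completion that the statement is about.
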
   

\par

\subsection{Continuity of the Homotopy Operators $s$.}    
Here we show that the homotopy operators $s$ are well defined on the spaces of continuous Hochschild chains generated 
by elementary chains which contain at least one gap.
\par
Suppose $K$ is an elementary chain containing at least one gap. Suppose the first gap is of order $r$. Recall
the operator $s_{(p)} (K)$ is defined by inserting the factor $ (e^{j_{r}}_{\beta_{r}}  \times \bar{e}^{j_{r}}_{\beta_{r}} )$ inside this  gap. 
\par
\begin{equation*}     
s_{(p)} (K) :=     
\end{equation*}
\begin{equation}
= (-1)^{r}
(e^{i_{0}}_{\alpha_{0}}  \times \bar{e}^{j_{0}}_{\beta_{0}} )  \otimes_{\mathit{C}} ... 
\otimes_{\mathit{C}}  (e^{i_{r}}_{\alpha_{r}}  \times \bar{e}^{j_{r}}_{\beta_{r}} ) 
\otimes_{\mathit{C}}  (e^{j_{r}}_{\beta_{r}}  \times \bar{e}^{j_{r}}_{\beta_{r}} )
\otimes_{\mathit{C}}  (e^{i_{r+1}}_{\alpha_{r+1}}  \times \bar{e}^{j_{r+1}}_{\beta_{r+1}} ) \otimes_{C}... 
\otimes_{\mathit{C}}  (e^{i_{p}}_{\alpha_{p}}  \times \bar{e}^{j_{p}}_{\beta_{p}} ).
\end{equation}   
\par
 \begin{proposition}  
The homotopy operators $s$ satisfy
\begin{equation}   
|| s K ||  =  || K ||.
\end{equation}   
and hence are well defined on the Hochschild sub-complex of continuous chains over the algebra of Hilbert-Schmidt operators. 
\end{proposition}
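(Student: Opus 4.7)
The plan is to exploit the fact that the elementary chains form an orthonormal basis of the Hilbert space $C_p(\mathit{HS}(X))$, and that $s$ is defined combinatorially on this basis by inserting a single tensor factor of the form $(e^{j_r}_{\beta_r}\times \bar{e}^{j_r}_{\beta_r})$ with a sign $\pm 1$. Thus for each elementary basis chain $K_\lambda$ with at least one gap, $sK_\lambda$ is, up to sign, another elementary basis chain of degree $p+1$. If I can show that $s$ sends distinct orthonormal basis elements of $C^0_p(\mathit{HS}(X))$ to distinct orthonormal basis elements of $C^0_{p+1}(\mathit{HS}(X))$, then for any finite combination $K=\sum c_\lambda K_\lambda$ we get $\|sK\|^2=\sum |c_\lambda|^2=\|K\|^2$, and the identity extends by continuity to the whole $L_2$-completion, giving both $\|sK\|=\|K\|$ and the well-definedness of $s$ on $C^0_p(\mathit{HS}(X))$.

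The first step is to verify that $sK_\lambda$ actually lies in $C^0_{p+1}(\mathit{HS}(X))$, i.e.\ still has a gap. Let $r$ be the first gap of $K_\lambda$. In $sK_\lambda$, positions $0,\dots,r-1$ are inherited unchanged from $K_\lambda$ and carry no gap; the composition at position $r$ is $\bar{e}^{j_r}_{\beta_r}\cdot e^{j_r}_{\beta_r}$, which matches, so no new gap arises there; but at position $r+1$ the inserted factor $(e^{j_r}_{\beta_r}\times\bar{e}^{j_r}_{\beta_r})$ meets the original $(e^{i_{r+1}}_{\alpha_{r+1}}\times\bar{e}^{j_{r+1}}_{\beta_{r+1}})$, and the mismatch $(j_r,\beta_r)\neq(i_{r+1},\alpha_{r+1})$ (which was precisely the meaning of the $r$-gap in $K_\lambda$) forces a gap at position $r+1$. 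Hence the first gap of $sK_\lambda$ is at position $r+1$, and in particular $sK_\lambda\in C^0_{p+1}(\mathit{HS}(X))$.

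The second, main step is injectivity. The previous paragraph already gives a canonical left inverse: from $sK_\lambda$ locate the first gap (it lies at some position $r'$), conclude that the factor immediately preceding this gap, namely the one at position $r'$, must be the inserted diagonal factor $(e^{j_{r'-1}}_{\beta_{r'-1}}\times\bar{e}^{j_{r'-1}}_{\beta_{r'-1}})$, remove it, and recover $K_\lambda$ with its first gap back at position $r'-1$. This procedure determines both $r$ and $K_\lambda$ unambiguously from $sK_\lambda$, so $s$ is injective on the set of elementary chains with a gap and the images are pairwise orthogonal (being distinct elements of the orthonormal basis of $C_{p+1}(\mathit{HS}(X))$, up to the overall sign $(-1)^r$).

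Combining these two steps, $s$ restricted to the orthonormal basis of $C^0_p(\mathit{HS}(X))$ is a bijection onto a subset of the orthonormal basis of $C^0_{p+1}(\mathit{HS}(X))$ modulo sign, which is the defining property of a linear isometry between Hilbert spaces. Extending from the dense subspace of finite linear combinations to the $L_2$-completion by continuity, $\|sK\|=\|K\|$ for every $K\in C^0_p(\mathit{HS}(X))$, so $s$ is a bounded (in fact isometric) operator on continuous Hochschild chains. The only delicate part is the gap-tracking argument in the injectivity step; once that is firmly in place, the rest is just the standard extension-by-density of an isometry defined on an orthonormal basis.
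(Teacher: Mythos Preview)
Your proof is correct and follows essentially the same approach as the paper: both arguments boil down to the observation that $s$ sends distinct elementary basis chains to distinct elementary basis chains up to sign, hence is an $L_2$-isometry. The paper phrases this in terms of a ``shift'' on the components indexed by the multi-indices $(i_0,\dots,j_p;\alpha_0,\dots,\beta_p)$, while you spell out the gap-tracking argument more explicitly and construct the left inverse (locate the first gap of $sK_\lambda$ at position $r+1$, delete the factor in that slot, recover $K_\lambda$); your version fills in details the paper leaves implicit.
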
   
\begin{proof}  
-i) Let  $K$ the above chain and let  $K_{i_{0},...,i_{p},j_{0},..,j_{p} }^{\alpha_{0},..,\alpha_{p}, \beta_{0},...,\beta_{p}} $ be its
components.  Then the components of $sK$ are
\begin{gather}     
(sK)_{ \;i_{0},...,i_{r}, j_{r},i_{r+1}, ..., \; \;i_{p}, \;j_{0},...,\;j_{r},j_{r},\;j_{r+1},...,j_{p} }
^{ \alpha_{0},..,\alpha_{r}, \beta_{r}, \alpha_{r+1},...,\alpha_{p}, \beta_{0}, ...,\beta_{r}, \beta_{r}, \beta_{r+1}, ...,\beta_{p} } = 
 (-1)^{r} K_{ \;i_{0},...,i_{r},\;i_{r+1}, ..., \;i_{p}, \;j_{0},...,\;j_{r},j_{r+1},...,\;j_{p} }
^{ \alpha_{0},..,\alpha_{r}, \alpha_{r+1},...,\alpha_{p}, \beta_{0}, ...,\beta_{r}, \beta_{r+1}, ...,\beta_{p} } .
\end{gather}   
\par

We have to check that the components of $sK$ satisfy the condition (68).  This is clearly true. Indeed, the formula (75) tells us that the passage from the components of $K$ to the components of $sK$ involves two operations: a shift (given by the addition of two columns $(j_{r}, j_{r}, \beta_{r}, \beta_{r}$) and the multiplication by $(-1)^{r}$. All other components, which do not contain the additional columns, are equal to zero.  The additional columns are uniquely characterised by the choice of the {\em first} gap. The shift prevents non-trivial linear combinations between the components of $K$ to produce the components of $sK$.  These arguments prove 
\begin{equation*}   
|| s K ||  =  || K ||.
\end{equation*}   
which completes the proof of the proposition.
\end{proof}   

\subsection{ Continuity of the Homotopy Operators $S$}   
\begin{proposition}     
The operator 
$S_{(p)}:    C^{\Delta}_{p}(\mathit{HS} (X)) \longrightarrow C^{\Delta}_{p+1}(\mathit{HS} (X)) $,  defined on elementary chains by the formula
\begin{multline}
S_{(p)} 
[(e_{\alpha_{0}}^{i_{0}} \times \bar{e}^{i_{1}}_{\alpha_{1}} )
 \otimes_{C} 
 (e_{\alpha_{1}}^{i_{1}} \times \bar{e}^{i_{2}}_{\alpha_{2}} )  
 \otimes_{C}  ... \otimes_{C}  
 (e_{\alpha_{p}}^{i_{p}} \times \bar{e}^{i_{0}}_{\alpha_{0}} )] := \\
:=  (e_{\alpha_{0}}^{i_{0}} \times \bar{e}^{I}_{\alpha_{0}} )   
  \otimes_{C} 
  (e_{\alpha_{0}}^{I} \times \bar{e}^{i_{1}}_{\alpha_{1}} )
 \otimes_{C} 
 (e_{\alpha_{1}}^{i_{1}} \times \bar{e}^{i_{2}}_{\alpha_{2}} )  \otimes_{C}  ... \otimes_{C}  (e_{\alpha_{p}}^{i_{p}} \times \bar{e}^{i_{0}}_{\alpha_{0}} ), 
\end{multline}
satisfies
\begin{equation}
|| SK || = || K||
\end{equation}
and hence, it is a continuous operator.
\end{proposition}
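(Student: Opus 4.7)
The plan is to mirror the proof of Proposition 35 for the operator $s$: exhibit $S_{(p)}$ as an isometric injection of the orthonormal basis of elementary diagonal $p$-chains into the orthonormal basis of elementary diagonal $(p+1)$-chains. Once this injectivity on basis elements is established, the identity $\|S_{(p)}K\| = \|K\|$ follows by Parseval, and continuity is automatic.

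First, I would expand a general $K \in C^{\Delta}_p(HS(X))$ as the $L_2$-convergent series
\[
K \;=\; \sum_{\alpha_0,\ldots,\alpha_p;\;i_0,\ldots,i_p} K^{\alpha_0,\ldots,\alpha_p}_{i_0,\ldots,i_p}\;(e^{i_0}_{\alpha_0}\times\bar{e}^{i_1}_{\alpha_1})\otimes_C\cdots\otimes_C(e^{i_p}_{\alpha_p}\times\bar{e}^{i_0}_{\alpha_0}),
\]
whose squared norm, by Definition 30 and orthonormality of the tensor-product basis, is $\sum |K^{\alpha_0,\ldots,\alpha_p}_{i_0,\ldots,i_p}|^2$. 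Applying $S_{(p)}$ term by term (via formula (77)) yields an $L_2$-candidate whose elementary summand indexed by $(\alpha_0,\ldots,\alpha_p;\,i_0,\ldots,i_p)$ is the elementary diagonal $(p+1)$-chain whose $[\,\cdot\,]$-symbol (Notation 14) has an extra initial vertex $[I,\alpha_0]$ inserted before $[i_1,\alpha_1]$, all other columns being left unchanged, and the coefficient is the same $K^{\alpha_0,\ldots,\alpha_p}_{i_0,\ldots,i_p}$.

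The crucial observation is that this assignment is injective on the orthonormal basis of $C^{\Delta}_p(HS(X))$: since the inserted column consists of the fixed index $I$ in its upper slot and of $\alpha_0$ (already present as the first lower index of the source chain) in its lower slot, the full tuple $(\alpha_0,\ldots,\alpha_p;\,i_0,\ldots,i_p)$ can be reconstructed uniquely from the image tuple. Hence $S_{(p)}$ sends distinct orthonormal basis vectors to distinct orthonormal basis vectors (the target family being orthonormal in $C^{\Delta}_{p+1}(HS(X))$ by the same Definition 30). By Parseval in both source and target,
\[
\|S_{(p)}K\|^2 \;=\; \sum_{\alpha_0,\ldots,\alpha_p;\;i_0,\ldots,i_p} |K^{\alpha_0,\ldots,\alpha_p}_{i_0,\ldots,i_p}|^2 \;=\; \|K\|^2,
\]
which is (78), and $S_{(p)}$ extends from its definition on elementary chains to a continuous operator of norm one on the completed complex.

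There is no substantive obstacle beyond careful index bookkeeping. The single point worth stressing is the injectivity: the inserted factor $\bar{e}^{I}_{\alpha_0}\otimes_C e^{I}_{\alpha_0}$ is determined entirely by the data $(\alpha_0,I)$ already present in the chain and by the fixed choice of $I$, so no two distinct elementary diagonal $p$-chains are mapped into the same elementary diagonal $(p+1)$-chain, and no nontrivial linear recombination among the components of $K$ can occur in $S_{(p)}K$. This is exactly the analogue of the shift argument used for $s$ in Proposition 35.
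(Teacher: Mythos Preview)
Your proof is correct and follows essentially the same approach as the paper: both arguments observe that $S_{(p)}$ acts on an elementary diagonal chain by a pure index shift (inserting the two columns $[I,\alpha_0]$, $[I,\alpha_0]$), sending distinct orthonormal basis vectors injectively to distinct orthonormal basis vectors, so that the coefficients of $K$ and $S_{(p)}K$ coincide and the norm is preserved. Your formulation via Parseval makes the conclusion slightly more explicit than the paper's, but the substance is identical; note only that your cross-references are off by one (the $s$ result is Proposition~34, and the norm definition is Definition~31).
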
   

\begin{proof} 
The proof goes along the same lines as in the proof of Proposition 33. 
\par
We have
\begin{gather}     
(SK)_{ \;i_{0},\;I,\;\;i_{1}, .. ,\;\;i_{p},   \;\; \;\;I \;\;\;i_{1}, \;\; i_{2},..,\;\;i_{p},\;i_{0} }
^{ \alpha_{0}, \alpha_{0},\alpha_{1},...,\alpha_{p},  \;,\alpha_{0},\;\alpha_{1}, \alpha_{2},..., \alpha_{p},\alpha_{0} } = 
              K_{ \;i_{0},\; i_{1}, ..., \;i_{p}, \;\;\;i_{1},...,\; i_{p},\;i_{0} }
^{ \alpha_{0},\alpha_{1},...,\alpha_{p}, \;\;\alpha_{1}, ..., \beta_{p}, \alpha_{0} } .
\end{gather}   
 The formula (78) tells that the passage from the components of $K$ to the components of $SK$ involves a shift (given by the addition of two columns $[I, \alpha_{0}] $ $[I, \alpha_{0}]$ placed on the second and third positions.  All other components, which are not obtained by this procedure, are equal to zero. The shift prevents non-trivial linear combinations between the components of $K$ to produce the components of $SK$.  These arguments complete the proof of the proposition.
\end{proof}   

\section{Topological Considerations}    

\subsection{Alexander-Spanier Co-homology.}   
To simplify the presentation of the construction of the Alexander-Spanier co-homology $H^{AS}_{\ast}(X, G)$, we assume the spaces 
$X$  are countable, locally finite simplicial complexes and $G$ is an arbitrary commutative group.
\par
Let $C_{AS}^{p}(X, G) := \{  f \;|\; f: X^{p+1} \longrightarrow G \}$, where $f$ is an arbitrary function. Let us call any such function $f$ a
{\em non-localised}  Alexander-Spanier  $p$ co-chain on $X$ with coefficients in $G$.
\par
The Alexander-Spanier co-boundary of $f$, $df$, is the $p+1$ co-chain given by the formula
\begin{equation}
df(x_{0}, x_{1}, ..., x_{p}, x_{p+1}) ;= \sum_{k=0}^{k=p+1} (-1)^{k}f (x_{0}, x_{1}, ..., \hat{x}_{k}, ... , x_{p+1}). 
\end{equation}
\par
The complex $\{ C_{AS}^{\ast}(X, G),  d\}_{\ast} $ is not interesting because it is {\em acyclic}. Formally, the acyclicity of this complex is provided by the homotopy operator $\tilde{s}$
\begin{equation}
(\tilde{s}f) (x_{0}, x_{1}, ..., x_{p-1}) := f (P, x_{0}, x_{1}, ..., x_{p-1}),
\end{equation}
where $P$ is an arbitrarily chosen point in $X$. 
\par
This fact may be immediately understood if we agree to think of the point $(x_{0}, x_{1}, ..., x_{p}) $ as representing the simplex   $[x_{0}, x_{1}, ..., x_{p}]$ having as vertices the {\em arbitrary} points $x_{0}, x_{1}, ..., x_{p} $ of $X$. The set of all such simplexes form a simplicial complex $\tilde{X}$ in which every point of $X$ is a vertex and any such vertices are allowed to be connected to form a simplex of this simplicial complex. Clearly, the simplicial complex $\tilde{X}$ is an infinite (if $X$ is an infinite set) dimensional simplex whose vertices are all points of $X$.
\par
The simplicial complex $\tilde{X}$ is, homotopically, a point. The point $P$ in the construction of the homotopy operator $\tilde{s}$ becomes the vertex of the cone over the simplicial complex $\tilde{X}$.
\par
On the other side, obviously, any non-localised Alexander-Spanier $p$ co-chain $f$ is a simplicial co-chain of the simplicial complex 
$\tilde{X}$ with coefficients in $G$. The Alexander-Spanier co-boundary $df$ is nothing but the usual coboundary of this simplicial co-chain.
\par
The basic idea of the Alexander-Spanier co-homology is to consider a {\em sub}-complex $\tilde{X}^{loc}$
of the simplicial complex $\tilde{X}$. A simplex $[x_{0}, x_{1}, ..., x_{p}]$ will be allowed to belong to this sub-complex provided the points $x_{0}, x_{1}, ..., x_{p}$ are sufficiently close one to each other, i.e. iff they belong to a tubular neighbourhood $\mathit{U}_{p+1}$ of the main diagonal $\nabla_{X}^{p+1} := \{(x, x, ... ,x) \; | \; for \; any \; x \in X \} \subset X^{p+1}$, for any $p$. 
Such a sub-complex will be denoted by $\tilde{X}_{\mathit{U}}$.  
Let $\tilde{U}$ be the collection of such neighbourhoods. 
\par
We assume that tubular neighbourhoods $\mathit{U}_{p+1}$ are compatible, i.e., that by removing any component point $x_{k}$ of a point in $\mathit{U}_{p+1}$ one gets a point in $\mathit{U}_{p}$. This could be realised, for example, by choosing a distance function on $X$ and allow  the points $x_{0}, x_{1}, ..., x_{p}$ to belong to $\mathit{U}_{p+1}$ provided their mutual distances do not exceed a sufficiently small fixed number $0 < \epsilon$.
\par
The sub-complex $\tilde{X}_{\mathit{U}}$ has $X$ as a deformation retract. Therefore the sub-complexes $\tilde{X}_{\mathit{U}}$ are homotopically equivalent to the original simplicial complex $X$ and hence they have isomorphic simplicial co-homologies.
\par
Let us denote the simplicial co-chain complex  associated to the sub-complex $\tilde{X}_{\mathit{U}}$ by 
$\{ C_{AS}^ {\mathit{U}, {\ast}} (X, G) , \; d\}_{\ast} $. Therefore, its homology is isomorphic to the simplicial homology of the complex $X$.
\begin{definition}   
Any co-chain 
$f \in C_{AS}^ {\mathit{U}, p} (X, G) $ 
will be called $\mathit{U}$-{\em local} Alexander-Spanier co-chain.
\end{definition}  

\par
The tubular neighbourhoods $\mathit{U}$  form an inductive system by declaring
$\mathit{U} \preccurlyeq    \mathit{V}$ iff  $\mathit{V}    \subseteqq  \mathit{U}$.  
\par
Let $ C_{AS}^{\ast}(X, G)$ denote the projective limit of the complexes    $\{ C_{AS}^ {\mathit{U}, \ast} (X, G) , \; d\} $
with respect to this filtration.

\begin{definition}   
Any co-chain belonging to the complex 
$ C_{AS}^{p}(X, G)$ 
is called an Alexander-Spanier co-chain of degree $p$.
\end{definition}   

As all cohomology complexes $C_{AS}^{p}(\mathit{U}, G)$ have isomorphic homologies, compatible with the deformation retractions discussed above, one gets the
\begin{theorem} (Alexander-Spanier)  
For any countable, locally finite simplicial complex $X$, the homology of the Alexander-Spanier complex $ C_{AS}^{\ast}(X, G)$ is canonically isomorphic to the simplicial co-homology $H^{\ast}(X, G)$.
\end{theorem}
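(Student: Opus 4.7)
The plan is to reduce the statement to the simplicial cohomology of each auxiliary complex $\tilde{X}_{\mathit{U}}$, exploiting the observation already made in the excerpt that $\tilde{X}_{\mathit{U}}$ deformation retracts onto $X$. The whole argument then splits into three steps: construct a simplicial retraction $r_{\mathit{U}} : \tilde{X}_{\mathit{U}} \to X$, promote it to a chain-homotopy equivalence, and pass to the limit over $\mathit{U}$.

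First I would fix, for concreteness, a metric on $X$ compatible with the simplicial structure and define $\mathit{U}_{p+1}$ to consist of those tuples $(x_{0}, \dots, x_{p})$ with $\max_{i,j} d(x_{i}, x_{j}) < \epsilon$ for some $\epsilon > 0$ smaller than the Lebesgue number of the open star cover of $X$. Then every simplex $[x_{0}, \dots, x_{p}]$ of $\tilde{X}_{\mathit{U}}$ lies in the closed star of a single vertex, so a choice of vertex $v(x)$ in the carrier simplex of each $x \in X$ yields a simplicial map $r_{\mathit{U}} : \tilde{X}_{\mathit{U}} \to X$. Second, I would construct a simplicial homotopy between $i \circ r_{\mathit{U}}$ and the identity of $\tilde{X}_{\mathit{U}}$ by the standard prism formula: the intermediate simplices $[v(x_{0}), \dots, v(x_{k}), x_{k}, \dots, x_{p}]$ all satisfy the smallness condition (because each $v(x_{i})$ is $\epsilon$-close to $x_{i}$), hence lie in $\tilde{X}_{\mathit{U}}$, and assembling them in the usual alternating-sum way gives the required chain homotopy. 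Since $C^{\mathit{U}, \ast}_{AS}(X, G)$ is by construction the simplicial cochain complex of $\tilde{X}_{\mathit{U}}$, this yields $H^{\ast}(C^{\mathit{U}, \ast}_{AS}(X, G)) \cong H^{\ast}(X, G)$ canonically for each $\mathit{U}$.

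Third, I would pass to the limit. Whenever $\mathit{U} \preccurlyeq \mathit{V}$ (i.e. $\mathit{V} \subseteq \mathit{U}$), the restriction map $C^{\mathit{U}, \ast}_{AS} \to C^{\mathit{V}, \ast}_{AS}$ is surjective on cochains and commutes, up to the homotopies built above, with the retractions $r_{\mathit{U}}, r_{\mathit{V}}$. Hence all transition maps are quasi-isomorphisms, the system is Mittag--Leffler, and the limit complex $C^{\ast}_{AS}(X, G)$ is quasi-isomorphic to each $C^{\mathit{U}, \ast}_{AS}(X, G)$. Combined with the previous step this gives a canonical isomorphism $H^{\ast}(C^{\ast}_{AS}(X, G)) \cong H^{\ast}(X, G)$.

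The hard part will be the second step: producing the simplicial retraction and the chain homotopy in a way that is genuinely independent of the many choices involved (choice of $v$, of $\epsilon$, of the simplicial structure on $X$) and that is compatible across different $\mathit{U}$. Concretely one must check that two different choices of $v$ yield retractions that are themselves chain homotopic through maps landing in $\tilde{X}_{\mathit{U}}$, so that the induced isomorphism $H^{\ast}(C^{\mathit{U}, \ast}_{AS}(X, G)) \cong H^{\ast}(X, G)$ is canonical; this compatibility is what makes the limit argument go through and what makes the final isomorphism natural in $X$.
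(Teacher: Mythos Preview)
Your approach is essentially the same as the paper's: the paper states this as a classical result attributed to Alexander--Spanier and justifies it only by the informal remarks preceding the statement, namely that each $\tilde{X}_{\mathit{U}}$ deformation retracts onto $X$ (hence $C_{AS}^{\mathit{U},\ast}(X,G)$ has the right cohomology) and that these isomorphisms are compatible across the system, so the limit inherits the same cohomology. You have simply fleshed out this sketch with an explicit retraction, a prism homotopy, and a Mittag--Leffler argument; the underlying strategy is identical.

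One small caution on your second step: the prism simplices $[v(x_{0}),\dots,v(x_{k}),x_{k},\dots,x_{p}]$ need not have diameter $<\epsilon$, since $d(v(x_{i}),x_{i})$ is bounded by the mesh of the triangulation rather than by $\epsilon$. The clean fix is to let the homotopy land in $\tilde{X}_{\mathit{U}'}$ for a slightly larger $\mathit{U}'\supseteq \mathit{U}$ (taking $\epsilon' = \epsilon + \mathrm{mesh}(X)$, still within a tubular neighbourhood if $\epsilon$ is small enough); this is harmless in the limit and is implicitly what the paper's phrase ``compatible with the deformation retractions'' is absorbing.
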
  
If the space $X$ does not have an additional analytical structure (differential structure, e. g.), any  Alexander-Spanier co-homology {\em class}  is the homology class $[f]$ of a function $f \in  C_{AS}^ {\mathit{U}, p} (X, G)$. Its support may be chosen  in an arbitrarily small neighbourhood $\mathit{U}$ of the diagonal.
\par
However, if $X$ does possess a differentiable structure, then already at the level of co-chains $f$, it is possible to associate with $f$ a de Rham differential form $DR(f)$, see e.g. \cite{Connes-Moscovici}
\begin{equation}  
DR(f) :=  \frac{\partial^{p}f(x_{0}, x_{1}, ... ,x_{p} ) }{
\partial x_{1}^{i_{1}},        \partial x_{2}^{i_{2}},  , ... ,  \partial x_{p}^{i_{p}}}{| \nabla_{p}} \;
dx_{1}^{i_{1}} \wedge    dx_{2}^{i_{2}} \wedge,  , ... , \wedge  dx_{p}^{i_{p}}.
\end{equation}   
\begin{theorem}   
The mappings $DR$ form a co-chain homomorphism which induces an isomorphism from the Alexander-Spanier co-homology to the de Rham cohomology of $X$.
\end{theorem}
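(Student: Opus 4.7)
The plan is to establish Theorem 36 in two steps: (a) that $DR$ is a morphism of cochain complexes, and (b) that the induced map on cohomology is an isomorphism.

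For (a), write a $p$-cochain $f$ as a function of $y_0,\dots,y_p$. The Alexander-Spanier coboundary
$df(x_0,\dots,x_{p+1}) = \sum_{k=0}^{p+1}(-1)^k f(x_0,\dots,\widehat{x_k},\dots,x_{p+1})$
has $k$-th summand independent of $x_k$; therefore for $k\geq 1$ it is annihilated by $\partial/\partial x_k^{i_k}$ and does not contribute to $DR(df)$. Only the $k=0$ term survives and yields
\[
DR(df) \;=\; \frac{\partial^{p+1} f}{\partial y_0^{i_0}\cdots\partial y_p^{i_p}}\bigg|_{\nabla_{p+2}}\,dx^{i_0}\wedge\cdots\wedge dx^{i_p}.
\]
On the other hand, applying the exterior derivative to $DR(f) = [\partial^p f/\partial y_1^{i_1}\cdots\partial y_p^{i_p}]|_{\nabla_{p+1}}\,dx^{i_1}\wedge\cdots\wedge dx^{i_p}$, the chain rule produces a sum over $j=0,\dots,p$ of terms in $\partial/\partial y_j^{i_0}$; for $j\geq 1$ the resulting mixed second derivative $\partial^2/\partial y_j^{i_0}\partial y_j^{i_j}$ is symmetric in $(i_0,i_j)$ and is thus killed by the antisymmetric wedge $dx^{i_0}\wedge dx^{i_j}$. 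Only the $j=0$ summand survives, and it matches the previous expression.

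For (b), I would proceed sheaf-theoretically. Let $\mathcal{A}^p$ be the sheaf on $X$ obtained by sheafifying $U\mapsto C^{p,\mathcal{U}}_{AS}(U,\mathbb{R})$ (Definition~34), and let $\Omega^p$ be the usual de Rham sheaf. Both are complexes of fine sheaves of real vector spaces: fineness of $\mathcal{A}^*$ follows because cochains can be multiplied pointwise by a smooth partition of unity, and fineness of $\Omega^*$ is classical. Passing to stalks, the Alexander-Spanier homotopy $\tilde s$ of (80), with basepoint $P$ taken to be the stalk point, supplies a local Poincar\'e lemma showing that $\mathcal{A}^*$ is a fine resolution of the constant sheaf $\underline{\mathbb{R}}_X$; the de Rham complex is another such resolution. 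The chain map $DR$ extends the identity on $\underline{\mathbb{R}}_X$, so the standard comparison theorem for fine resolutions of a sheaf yields the desired isomorphism $H^*_{AS}(X,\mathbb{R})\cong H^*_{dR}(X)$.

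The main obstacle lies in step (b): the homotopy $\tilde s$ uses a basepoint $P$ inserted as the first variable of the cochain, and this insertion may a priori violate the small-support condition around the diagonal that defines $\mathcal{A}^*$. To make the sheafified complex a resolution of $\underline{\mathbb{R}}_X$, one must show that on arbitrarily small opens $U$ containing $P$, inserting $P$ preserves the support condition dictated by the neighbourhood system $\mathcal{U}$ at the level of germs. This is essentially a local, \emph{$\epsilon$-version} of the Alexander-Spanier Poincar\'e lemma; once it is in hand, the comparison argument in (b) proceeds formally, and compatibility of $DR$ with integration of forms over geometric simplices provides an alternative verification.
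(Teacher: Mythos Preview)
The paper does not prove this theorem; it is stated in \S 8.1 as a classical fact (the map $DR$ is taken from Connes--Moscovici), and is immediately followed by Remark~39 with no intervening argument. So there is no proof in the paper to compare against.

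Your sketch is the standard and correct route. Part (a) is the usual computation: for $k\ge 1$ the $k$-th face of $df$ omits the variable $x_k$ and is therefore annihilated by $\partial/\partial x_k^{i_k}$, while on the de~Rham side the chain-rule cross terms vanish by symmetry of mixed second partials against the antisymmetry of the wedge. Part (b), via two fine resolutions of the constant sheaf and the comparison theorem, is the textbook argument.

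The ``obstacle'' you flag in your final paragraph is not a genuine one. After sheafification, a germ at a point $x$ is represented by a cochain on an arbitrarily small open $U\ni x$; choosing $U$ small enough that $U^{p+1}$ already lies inside the prescribed tubular neighbourhood of the diagonal, the support condition becomes vacuous on $U$, and the cone operator $\tilde s$ with apex $P=x$ applies without restriction. This is precisely why the sheafified Alexander--Spanier complex resolves $\underline{\mathbb R}_X$; no separate ``$\epsilon$-version'' of the Poincar\'e lemma is required beyond this observation.
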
   
\begin{remark}  
If the space $X$ possesses a sufficiently regular differentiable structure (at least $C^{2}$), then any Alexander-Spanier co-homology class $[f]$ may be represented by a closed differential form $DR(f)$ by {\em going to the diagonal}, where this differential form lives,  and therefore, entering the  classical differential geometry.
\par
If the space $X$ does not possess a sufficiently regular analytical structure (as e.g. a $C^{2}$-differentiable structure),  then  Alexander-Spanier co-homology classes of degree $p$ may not be represented by classical differential forms. They can, however,  be represented by  {\em arbitrary} functions (even not continuous), with support in an {\em arbitrarily small} tubular neighbourhood of the diagonal $\nabla_{p+1}$. These are examples of elements in the {\em non-commutative geometry} of $X$.
\par
If the space $X$ possesses intermediate analytical structures, e.g. a Lipschitz structure, one may consider the Whitney complex consisting of  measurable, bounded differential forms, with measurable, bounded exterior derivatives, see e.g. \cite{Teleman_IHES}. If the manifold $X$ has a quasi-conformal structure, then the manifold possesses differential forms whose components satisfy weaker conditions, see e.g. \cite{Donaldson-Sullivan}, \cite{Connes-Sullivan-Teleman}.  Both, in the Lipschitz and quasi-conformal case, the corresponding differential forms live in the classical differential geometry, i.e. they live on the diagonals $\nabla$, although their components are defined almost everywhere and are discontinuous. In these two situations, the $DR$-homomorphisms are defined {\em provided} the corresponding Alexander-Spanier co-chains $f$ belong to the {\em projective} tensor products of the algebra of Lipschitz, resp. the Royden algebra, see Connes-Sulivan-Teleman
\cite{Connes-Sullivan-Teleman} for more details.
\end{remark}   


\subsection{Alexander-Spanier Homology.}  

The Alexander-Spanier homology is dual to the Alexander-Spanier co-homology.
\par
Formally, an Alexander-Spanier chain $\gamma$ of degree $p$ with coefficients in the commutative group $G$ would be 
an {\em infinite} formal sum
\begin{equation}   
\gamma = \sum_{[x_{0}, x_{1}, ... ,x_{p} ]}  \gamma_{[x_{0}, x_{1}, ... ,x_{p} ]} [x_{0}, x_{1}, ... ,x_{p} ],
\end{equation}  
where $\gamma_{[x_{0}, x{1}, ... ,x_{p} ]} \in G.$  
\par
The Alexander-Spanier boundary of $\gamma$ would be 
\begin{equation}   
\partial \gamma = \sum_{k=0}^{k=p} (-1)^{k}
\sum_{[x_{0}, x_{1}, ... ,x_{p} ]}  
\gamma_{[x_{0}, x{1}, ... ,x_{p} ]} [x_{0}, ..., \hat{x_{k}}, ... ,x_{p} ].
\end{equation}  
Although infinite chains (82) could be considered in a homology theory, if $X$ is an infinite set, the boundary $\partial \gamma$ does not make sense because the coefficient of the simplex $ [x_{0}, ..., \hat{x_{k}}, ... ,x_{p} ]$ would be given by an infinite sum with respect of $x_{k} \in X$.
\par
The meaning of the formula (83) could be recovered provided the infinite sum would be replaced by an integral.
\begin{definition}   
A real/complex Alexander-Spanier chain $\gamma$ of degree $p$ on the space $X$ is a real/complex valued Lebesque integrable function $\gamma$ on $X^{p+1}$ with support in a tubular neighbourhood of the diagonal
$\nabla_{X} \in X^{p+1}$.
\end{definition}   
\par
Let $C^{AS}_{p}(X)$ denote the set of all Alexander-Spanier $p$-chains on $X$.
\par
The Alexander-Spanier $k^{th}$ boundary face $\partial_{(p)k}^{AS} \gamma$ of  $\gamma$  is
\begin{equation}   
( \partial_{(p)k}^{AS} \gamma )  (x_{0}, ... ,x_{p-1}) \; := \; 
\int_{X} \gamma   (x_{0}, ... ,x_{k-1}, \;t\;,   x_{k}, ... ,x_{p} )  d \mu(t)
\end{equation}  
\par
The Alexander-Spanier boundary of the chain $\gamma$ is
\begin{equation}   
\partial_{(p)}^{AS} \gamma \;:= \; \sum_{k=0}^{k=p} \; (-1)^{k} \partial^{AS}_{(p)k} \gamma.
\end{equation}  
\par
The directed system of neighbourhoods of the diagonal $\mathit{U}$ discussed in the previous sub-section \S 8.1.
lead to a projective system of complexes
 $\{  C^{AS}_{\ast}, \; \partial^{AS}_{\ast}    \}_{\ast} $ . 
\begin{definition}   
A real/complex Alexander-Spanier chain on the space $X$ is by definition any element of the
complex  $\{  C^{AS}_{\ast}, \; \partial^{AS}_{\ast}    \}_{\ast} $.
\end{definition}   
\begin{theorem} (Alexander-Spanier)  
The homology of the real/complex Alexander-Spanier complex of $X$,  
$\{  C^{AS}_{\ast}, \; \partial^{AS}_{\ast}    \}_{\ast} $, 
 is isomorphic to the singular real/complex 
homology of $X$.
\end{theorem}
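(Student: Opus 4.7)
The plan is to mirror the structure of Theorem 36 on the predual side: for each admissible tubular neighbourhood $\mathcal{U}$ of the diagonal I would construct a chain-level quasi-isomorphism between the complex $\{C^{AS,\mathcal{U}}_\ast(X),\partial^{AS}\}$ and the simplicial chain complex of a sufficiently fine iterated barycentric subdivision $X^{(N)}$ of $X$, with $N=N(\mathcal{U})$. Since the simplicial chain complex of every $X^{(N)}$ computes the singular real/complex homology of $X$, passing to the appropriate limit over the directed system of neighbourhoods from \S 8.1 will yield the result.

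Construction of the forward map. Fix $\mathcal{U}$ and choose $N$ large enough that, for every simplex $\tau=[v_0,\ldots,v_p]$ of $X^{(N)}$, the product of the open stars satisfies $\mathrm{St}(v_0)\times\cdots\times\mathrm{St}(v_p)\subset\mathcal{U}$. Let $\tilde{\chi}_v := \chi_{\mathrm{St}(v)}/\mu(\mathrm{St}(v))$ be the normalised indicator, a probability density on $\mathrm{St}(v)$. Define
\[
\Phi_p([v_0,\ldots,v_p]) \;:=\; \tilde{\chi}_{v_0}\otimes\tilde{\chi}_{v_1}\otimes\cdots\otimes\tilde{\chi}_{v_p}\;\in\;C^{AS,\mathcal{U}}_p(X).
\]
A direct Fubini computation shows that $\Phi$ is a chain map: the $k$-th Alexander-Spanier face $\partial^{AS}_{(p)k}$ integrates out the $k$-th coordinate of the tensor product, and $\int_X\tilde{\chi}_{v_k}\,d\mu=1$, so $\partial^{AS}_{(p)k}\Phi_p(\tau)=\Phi_{p-1}(\tau\setminus v_k)$; summing with the alternating signs reproduces the simplicial boundary.

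I would then establish that $\Phi$ is a quasi-isomorphism by producing an averaging homotopy inverse $\Psi:C^{AS,\mathcal{U}}_\ast(X)\to C^{\mathrm{simp}}_\ast(X^{(N)})$ via
\[
\Psi_p(\gamma) \;:=\; \sum_{[v_0,\ldots,v_p]}\Big(\int_{X^{p+1}}\gamma\cdot(\tilde{\chi}_{v_0}\otimes\cdots\otimes\tilde{\chi}_{v_p})\,d\mu^{p+1}\Big)\,[v_0,\ldots,v_p],
\]
the sum ranging over $(p+1)$-tuples of vertices of $X^{(N)}$ with star-product inside $\mathcal{U}$. One verifies $\Psi\circ\Phi=\mathrm{Id}$ by the (near-)disjointness of the open stars on a sufficiently fine subdivision, and that $\Phi\circ\Psi-\mathrm{Id}$ is null-homotopic on $C^{AS,\mathcal{U}}_\ast$ via an operator which telescopically inserts a normalised indicator factor in each tensor position — structurally the exact analogue, transported to the Alexander-Spanier setting, of the operators $s$ and $S$ already constructed in \S 6.1--6.2 for the Hochschild complex. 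Since refining $\mathcal{U}$ simply forces a larger $N$, and since the diagram intertwines with the classical simplicial subdivision operator (a quasi-isomorphism), the quasi-isomorphism passes to the limit, giving $H_\ast(C^{AS}_\ast(X))\cong\varinjlim_N H^{\mathrm{simp}}_\ast(X^{(N)})\cong H^{\mathrm{sing}}_\ast(X)$.

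The principal obstacle I expect is the explicit construction of the chain homotopy realising $\Phi\Psi\simeq\mathrm{Id}$: the averaging $\Psi$ discards most of the function-theoretic content of an $L^1$-chain $\gamma$, and one must verify that what is discarded is an Alexander-Spanier boundary via a carefully chosen sequence of face-insertion operators that behaves well under $L^1$-convergence and commutes with refinement of $\mathcal{U}$. The continuity estimates analogous to those of \S 7.3 and \S 7.4 should carry over, but the bookkeeping — keeping track of which tensor positions have been normalised and ensuring the telescoping cancellation — is the delicate part. Granting this, the theorem reduces to the classical fact that singular homology agrees with the simplicial homology of any triangulation of $X$.
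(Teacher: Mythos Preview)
The paper does not prove this theorem. Theorem~43 is stated as a classical result attributed to Alexander and Spanier, exactly parallel to Theorem~37 for cohomology, and no argument is supplied; it is treated as background. The only related statement the paper proves is Proposition~44, which handles the very special sub-complex of chains that are \emph{constant on poly-tops} $\Delta_{\alpha_0}\times\cdots\times\Delta_{\alpha_p}$, and its proof is a two-line appeal to the five-lemma. So there is no ``paper's proof'' to compare with; your proposal is an attempt at something the author simply cites.

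On your proposal itself: the claim $\Psi\circ\Phi=\mathrm{Id}$ fails as written. Open stars of vertices are \emph{not} disjoint---$\mathrm{St}(v)\cap\mathrm{St}(w)\neq\emptyset$ whenever $v$ and $w$ lie in a common simplex---so the coefficient $\int_X \tilde\chi_{v_i}\tilde\chi_{w_i}\,d\mu$ is nonzero for many $w_i\neq v_i$, and $\Psi\Phi$ produces a nontrivial linear combination of neighbouring simplices, not the identity. Passing to a finer subdivision does not help: the overlap pattern is scale-invariant. The natural repair is to replace open stars by the top-dimensional simplices $\Delta_\alpha$ themselves (which partition $X$ up to a null set) and to use normalised characteristic functions $\chi_{\Delta_\alpha}/\mu(\Delta_\alpha)$; this is exactly what the paper does in Proposition~44 and in \S 8.3, and with that choice $\Psi\Phi=\mathrm{Id}$ genuinely holds. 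Your telescoping homotopy for $\Phi\Psi\simeq\mathrm{Id}$ would then be a predual of the standard cone contraction from \S 8.1, but you have correctly identified that this is the step requiring real work, and you have not carried it out. In short: the strategy is reasonable, but the specific choice of building blocks is wrong, and the hard homotopy is only gestured at.
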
  

\begin{proposition}   
Let $X$ be a countable, locally finite simplicial complex of dimension $n$ and let $\{\Delta_{\alpha}\}_{\alpha}$ denote its
$n$-dimensional simplices. Suppose $\mu$ is a Lebesque measure on $X$ such that each simplex 
$\Delta_{\alpha}$ has measure $1$
\begin{equation}
\int_{\Delta_{\alpha}}  \;  1 \; d\mu  = 1.
\end{equation} 
Let $G$ be an arbitrary Abelian group and let $C^{AS}_{(p)} (X, G)$ be the set of all $G$-valued Alexander-Spanier $p$-chains on $X$ which are constant on each poly-top  
$\Delta_{\alpha_{0}} \times \Delta_{\alpha_{1}} \times ... \times \Delta_{\alpha_{p}}$.
\par
Then  $ \{
C^{AS}_{(\ast)} (X, G) , \partial^{AS}
\} _{\ast}$
is a complex and its homology is the singular (simplicial) homology $H_{\ast} (X, G)$.
\end{proposition}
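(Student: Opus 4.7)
The plan is to identify $\{C^{AS}_{(*)}(X,G),\partial^{AS}\}$ with the ordered simplicial chain complex of an appropriate nerve, and then to apply the Nerve theorem. First, one verifies directly that the subspace of chains constant on polytops is stable under $\partial^{AS}$. Writing $\gamma_{\alpha_0,\ldots,\alpha_p}\in G$ for the constant value of $\gamma$ on $\Delta_{\alpha_0}\times\cdots\times\Delta_{\alpha_p}$, the normalization $\mu(\Delta_\beta)=1$ reduces each face integral to
\begin{equation*}
(\partial^{AS}_{(p)k}\gamma)_{\alpha_0,\ldots,\alpha_{p-1}} \;=\; \sum_{\beta}\gamma_{\alpha_0,\ldots,\alpha_{k-1},\beta,\alpha_k,\ldots,\alpha_{p-1}},
\end{equation*}
where the local finiteness of $X$, together with the tubular-neighborhood support condition on $\gamma$, makes the sum finite. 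In particular $\partial^{AS}\gamma\in C^{AS}_{(p-1)}(X,G)$.

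Next, let $\mathcal{N}(X)$ denote the nerve of the closed cover of $X$ by its maximal simplices $\{\Delta_\alpha\}_{\alpha\in\Lambda}$: its vertices are the indices $\alpha\in\Lambda$, and an ordered tuple $(\alpha_0,\ldots,\alpha_p)$ spans a $p$-simplex iff $\overline{\Delta_{\alpha_0}}\cap\cdots\cap\overline{\Delta_{\alpha_p}}\neq\emptyset$. A polytop $\Delta_{\alpha_0}\times\cdots\times\Delta_{\alpha_p}$ meets an arbitrarily small tubular neighborhood of the diagonal $\nabla_X$ precisely when this intersection is non-empty; hence passing to the projective limit over shrinking tubular neighborhoods forces $\gamma_{\alpha_0,\ldots,\alpha_p}=0$ off the simplices of $\mathcal{N}(X)$. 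The assignment $\gamma\mapsto\{\gamma_{\alpha_0,\ldots,\alpha_p}\}$ therefore provides a $G$-linear isomorphism between $C^{AS}_{(p)}(X,G)$ and the ordered simplicial $p$-chains of $\mathcal{N}(X)$ with coefficients in $G$, and the boundary displayed above is exactly the ordered simplicial boundary on $\mathcal{N}(X)$.

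Two classical facts then close the argument. The ordered simplicial chain complex of any simplicial complex computes the same homology as its standard (oriented) simplicial chain complex, so the homology of $\{C^{AS}_{(*)}(X,G),\partial^{AS}\}$ is the simplicial homology $H^{\mathrm{simp}}_{*}(\mathcal{N}(X);G)$. Since $X$ is homogeneous, the non-empty finite intersections $\Delta_{\alpha_0}\cap\cdots\cap\Delta_{\alpha_p}$ are common faces of $X$ and, in particular, contractible; the Nerve theorem then yields $|\mathcal{N}(X)|\simeq X$, whence $H^{\mathrm{simp}}_{*}(\mathcal{N}(X);G)\cong H_{*}(X;G)$. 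The principal obstacle is the translation performed in the second paragraph: one must justify carefully that the analytic support condition (support in a tubular neighborhood of the diagonal, taken in the projective limit) is equivalent to the purely combinatorial non-empty-intersection condition defining $\mathcal{N}(X)$, and must invoke local finiteness at the right moment to guarantee that every face-sum appearing in the identification is a finite sum in $G$, so that the construction is meaningful for arbitrary abelian coefficients.
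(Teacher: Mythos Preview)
Your argument is essentially correct and is considerably more explicit than the paper's own proof, which consists of two sentences: the Alexander--Spanier homology is a homology functor, and the five-lemma applied to the inclusion of the CW chain complex of $X$ into the Alexander--Spanier complex finishes the argument. The paper thus treats the statement as a formal comparison of two homology theories that agree on a point and satisfy excision, while you instead give a direct combinatorial identification of the complex in question with the ordered chain complex of the nerve $\mathcal{N}(X)$ of the closed cover by top simplices, and then appeal to the Nerve theorem. Your route has the advantage that it makes transparent \emph{which} simplicial object the constant-on-polytop complex actually is, and it explains precisely where the support condition and the measure normalisation enter; the paper's route has the advantage of being insensitive to such combinatorics, since it only uses the axiomatic characterisation of ordinary homology.

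Two small points are worth tightening. First, your invocation of the Nerve theorem is for a \emph{closed} cover; you should state the version you are using (e.g.\ a CW complex covered by subcomplexes whose nonempty finite intersections are contractible), and note explicitly that the standing homogeneity assumption on $X$ is what guarantees the closed $n$-simplices actually cover $X$ --- this hypothesis is not restated in the proposition itself. Second, in passing to the projective limit over tubular neighbourhoods you should be slightly more careful: the paper's definitions of Alexander--Spanier chains and of the limiting procedure are not fully spelled out for general abelian $G$, so the step ``support meets every tubular neighbourhood $\Leftrightarrow$ the closed simplices intersect'' deserves the one extra sentence you allude to at the end. Neither point is a genuine gap, but both are places where a referee would ask for a line of justification.
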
   
\begin{proof}   
The Alexander-Spanier homology is a homology functor. The five-lemma applied onto the inclusion of the CW-homology complex of $X$ into the Alexander-Spanier complex completes the argument.
\end{proof}    
\begin{remark}
The condition (86) is not important for the overall homology result and may be easily removed accordingly.
\end{remark}
\subsection{Isomorphism between    
$ \{ \; C^{I, \mathit{loc} }_{\ast}, \; b \;  \}_{\ast} $ 
and   
$ \{ C^{AS}_{(\ast)} (X, G) , \partial^{AS} \}_{\ast}  $,  $G = \mathit R$, or $ \mathit C$.
}
\par
\begin{definition}   
The space of {\em local} chains in $ C^{I}_{(p)(HS)}$ is by definition
 $ C^{I, \mathit{loc}}_{(p)(HS)}$ consisting of all chains $ K \in C^{I}_{(p)(HS)}$
whose supports lay in a tubular neighbourhood of the diagonal 
$\triangledown_{X} \in X^{p+1}$.
\end{definition}    
\par
Recall that any $K \in C^{I, \mathit{loc}}_{(p)}(HS(X)) $ has the expression 

\begin{equation}    
K = \sum_{\alpha_{0}, \alpha_{1}, ..., \alpha_{p} }  K^{\alpha_{0}, \alpha_{1}, ..., \alpha_{p} }
  (e^{I}_{\alpha_{0}}  \times \bar{e}^{I}_{\alpha_{1}} )  
\otimes_{\mathit{C}}  (e^{I}_{\alpha_{1}}  \times \bar{e}^{I}_{\alpha_{2}} ) 
\otimes_{\mathit{C}}  .... 
\otimes_{\mathit{C}}  (e^{I}_{\alpha_{p}}  \times \bar{e}^{I}_{\alpha_{0}} ), 
\end{equation}   
where $K^{\alpha_{0}, \alpha_{1}, ..., \alpha_{p} }$ are real or complex numbers.
\par
The corresponding chain $[K] $  is given by formula  (37),  in which all indices $i_{k} = I$ 
\begin{equation}  
[K] = \sum_{\alpha_{0}, \alpha_{1}, ..., \alpha_{p} }  K^{\alpha_{0}, \alpha_{1}, ..., \alpha_{p} }
  [ {\alpha_{1}}, {\alpha_{2}}, ...,  {\alpha_{p}}, {\alpha_{0}} ], 
\end{equation}  
 and the repeated index $I$ was omitted. 
\par
Recall that the formula (40) states
\begin{equation}  
[b K] \;=\; \partial [K].
\end{equation}  
\par
Now we are going to interpret the simplicial chain $[K]$ as an Alexander-Spanier $p$-chain 
$\{  K \}  \in  C_{p}^{AS}(X, G) $ ( $ G = \mathit{R}, \mathit{C}$). To do this, we agree to think of this function as taking the constant value  $K^{\alpha_{0}, \alpha_{1}, ..., \alpha_{p} }$ on the poly-top
$  \Delta_ {\alpha_{1}} \times \Delta_{\alpha_{2}} \times ...\times \Delta_ {\alpha_{p}} \times \Delta_ {\alpha_{0}}  $.
\par

\begin{theorem}    
-i) Suppose each simplex $\Delta_{\alpha}$ has measure 1, see formula (86). Then
\begin{equation}   
\partial^{AS}_{(p)k} \{K\}= \partial_{(p)k} [K],  \hspace{0.2cm} for \; any \;    0 \leq k \leq p.
\end{equation}    
Therefore,
\begin{equation}   
\partial^{AS} \{K\} = \partial [K].
\end{equation}   
\par
-ii) If the supports of the chains of $ \{ \; C^{I, \mathit{loc} }_{\ast}, \; b \;  \}_{\ast} $ lay in a tubular neighbourhood of the diagonals
$\nabla_{p}$, for any $p$, then the homology of the complex $ \{ \; C^{I, \mathit{loc} }_{\ast}, \; b \;  \}_{\ast} $ is isomorphic to the Alexander-Spanier homology $H^{AS}_{\ast} (X)$.
\end{theorem}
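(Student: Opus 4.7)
The plan is to prove part (i) by direct computation exploiting the normalization $\int_{\Delta_\alpha} 1\, d\mu = 1$, and then to deduce part (ii) by combining (i) with Proposition 39 and the Alexander-Spanier theorem (Theorem 38) in their local versions.

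For part (i), fix $K \in C^{I,\mathit{loc}}_{(p)}(HS(X))$ with expansion (87), so that $\{K\}$ is the $L_2$-function on $X^{p+1}$ taking the constant value $K^{\alpha_0,\alpha_1,\dots,\alpha_p}$ on the poly-top $\Delta_{\alpha_1}\times\Delta_{\alpha_2}\times\cdots\times\Delta_{\alpha_p}\times\Delta_{\alpha_0}$ and zero elsewhere. First I would compute $\partial^{AS}_{(p)k}\{K\}$ for $0\le k\le p-1$ from definition (84): pointwise, when the remaining arguments lie in the respective simplices $\Delta_{\alpha_j}$, the integral of $\{K\}$ over the $k$-th variable $t\in X$ splits into a sum over the simplices $\Delta_{\alpha_{k+1}}$ containing $t$, each contributing $K^{\alpha_0,\dots,\alpha_{k+1},\dots,\alpha_p}\cdot\mu(\Delta_{\alpha_{k+1}})=K^{\alpha_0,\dots,\alpha_{k+1},\dots,\alpha_p}$ by the normalization assumption. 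Matching this sum against the simplicial face formula (38) for $[b_{(p)k}K]$ (up to the sign $(-1)^{k-1}$ and the cyclic convention of Notation 14) shows that the Alexander-Spanier face and the simplicial face agree. The case $k=p$ uses formula (39) and the observation that the last variable ranges over $\Delta_{\alpha_0}$; a parallel computation yields the same conclusion. Summing over $k$ with signs produces (91), so $K\mapsto\{K\}$ is a chain map.

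For part (ii), observe that the assignment $K\mapsto\{K\}$ is an injective \emph{isomorphism} of chain complexes from $C^{I,\mathit{loc}}_{\ast}(HS(X))$ onto the subcomplex of $C^{AS,\mathit{loc}}_{\ast}(X,G)$ consisting of Alexander-Spanier chains whose supports lie in the prescribed tubular neighbourhood of the diagonal \emph{and} which are constant on each poly-top $\Delta_{\alpha_1}\times\cdots\times\Delta_{\alpha_p}\times\Delta_{\alpha_0}$: elementary chains $(e^I_{\alpha_0}\times\bar e^I_{\alpha_1})\otimes\cdots\otimes(e^I_{\alpha_p}\times\bar e^I_{\alpha_0})$ correspond bijectively to characteristic functions of those poly-tops, and the coefficient identification is tautological. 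Applying the local analogue of Proposition 39 (proved by the same five-lemma argument, since simplicial/CW chains with small supports compute the same singular homology via barycentric subdivision), the homology of this poly-top subcomplex is the singular homology $H_\ast(X,G)$, which by the Alexander-Spanier Theorem 38 coincides with $H^{AS}_\ast(X,G)$.

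The main obstacle I anticipate is the bookkeeping in part (i): Notation 14 places the index $\alpha_0$ at the end of the symbol $[K]$ and the poly-top convention mirrors this cyclic shift, so the simplicial face that removes the $k$-th vertex of the symbol actually corresponds to integrating the $k$-th factor in the poly-top, and one must verify that all signs and positions agree simultaneously for $0\le k\le p-1$ and for $k=p$. Once this combinatorial check is discharged, part (ii) is essentially formal: everything used is local, so the homotopies and quasi-isomorphisms carry over without modification, and the deformation retraction of $\tilde X_{\mathit{U}}$ to $X$ underlying Theorem 38 restricts to the subcomplex of chains constant on poly-tops, which completes the identification.
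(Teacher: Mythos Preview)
Your proposal is correct and follows essentially the same route as the paper: part~(i) is the same direct computation---the paper writes $\{K\}$ explicitly as a sum of products of characteristic functions $\chi_{\alpha_j}$ and uses $\int_X \chi_{\alpha_k}(t)\,d\mu(t)=1$ to collapse the integral, which is exactly what you describe verbally---and for part~(ii) the paper simply writes ``Part~-i) implies~-ii)'', so your more explicit argument via the identification with poly-top-constant Alexander--Spanier chains is just a fleshing-out of what the paper leaves implicit. One minor point: the results you cite as ``Proposition~39'' and ``Theorem~38'' are, in the paper's numbering, Proposition~44 and Theorem~43 (the \emph{homology} versions), but the content you invoke is the right one.
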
  
\begin{proof}  
-i) It is sufficient to prove formula (85). One has 
\begin{multline}  
\partial^{AS}_{(p)k} \{K\} (x_{0}, x_{1}, ..., x_{p-1}) := \int_{X} \{K\} (x_{0}, x_{1}, ..., x_{k-1}, \;t, \; x_{k},.., x_{p-1}) \; d\mu(t) = \\
=       \sum_{\alpha_{0}, \alpha_{1}, ..., \alpha_{p}}
K^{\alpha_{0}, \alpha_{1} ... \alpha_{p} }   \int_{X}  
\chi_{\alpha_{0}}(x_{0}) ... 
\chi_{\alpha_{k-1}}(x_{k-1})  
 \; \chi_{\alpha_{k}}(t)  \;
\chi_{\alpha_{k+1}}(x_{k}) ...
\chi_{\alpha_{p}}(x_{p-1})  
 \; d\mu(t) = \\
 =   \; \sum_{\alpha_{0}, \alpha_{1}, ..., \hat{\alpha}_{k},...,\alpha_{p}} \; \sum_{\alpha_{k}}
K^{\alpha_{0}, \alpha_{1} ... \alpha_{p} }    
\chi_{\alpha_{0}}(x_{0}) ... 
\chi_{\alpha_{k-1}}(x_{k-1})  
  \;
\chi_{\alpha_{k+1}}(x_{k}) ...
\chi_{\alpha_{p}}(x_{p-1})  =
\end{multline}
\begin{equation*}
= \partial_{(p)k} \{K\} (x_{0}, x_{1}, ..., x_{p-1}).
 \end{equation*}   
where  $\chi_{\Delta}$ are characteristic functions.
\par
Part -i) implies -ii).
\end{proof}  

\section{Control of the Supports of Hochschild Chains.}   
In this section we are going to show that if the triangulation of the space $X$ is sufficiently fine, then the quasi-isomorphisms treated in \S 5 and \S 6
\begin{equation} 
\{  \;   C_{\ast}(HS) , b  \; \} \longleftarrow \{  \;  C^{\Delta}_{\ast}(HS) , b  \; \}  \longleftarrow  \{ \;  C^{I}_{\ast}(HS) , b \; \}
\end{equation}  
pass to the {\em local} sub-complexes. The computation of the homology of the sub-complex 
 $ \{ \;  C^{I, \mathit {loc}}_{\ast}(HS) , b \; \}$ 
 will complete the proof of Theorem 1.
 \par
We remark that the Hochschild boundary $b$ as well as the homotopy operators $s$ and $S$ discussed in \S 6.1. and \S 6.2. send continuous Hochschild chains which have  {\em small support} about the diagonal into chains of the same type. For the purpose of controlling the support of Hochschild chains we introduce the {\em diameter} of chains (see Definition 48 below). 
\par
Assume for simplicity that $X$ is connected.
We say that the {\em simplicial distance} between the top dimensional simplexes  $\Delta_{\alpha}$ and $\Delta_{\beta}$ is $d$ provided $d$ is the minimum number of $1$-simplices needed to connect a vertex of the simplex $\Delta_{\alpha}$ with a vertex of the simplex $\Delta_{\beta}$.
 \par
 The simplicial distance leads to an increasing filtration of the space of elementary chains.
\begin{definition}   
 Given the elementary chain
 \begin{equation*}      
K =  (e^{i_{0}}_{\alpha_{0}}  \times \bar{e}^{j_{0}}_{\beta_{0}} )                
                            \otimes_{\mathit{C}}  ...   \otimes_{\mathit{C}}                            
 (e^{i_{p}}_{\alpha_{p}}  \times \bar{e}^{j_{p}}_{\beta_{p}} ).
\end{equation*}   
we define its {\em diameter}
\begin{equation}   
\mathit{Diam}(K) := \text {maximal simplicial distance between the simplices}  \;
\Delta_{\alpha_{0}},..., \Delta_{\alpha_{p}},  \Delta_{\beta_{0}},...,  \Delta_{\beta_{p}}
\end{equation} 
\end{definition}   
\begin{definition}  
For any natural number $N$, let 
$C_{p}(HS(X))_{N}  \subset C_{p}(HS(X))$ be
\begin{equation}   
C_{p}(HS(X))_{N} :=  \{ K \;|\; K = \sum _{\alpha_{0},\beta_{0},...,\alpha_{p},\beta_{p}}^{i_{0}, j_{0},...,i_{p}, j_{p}}
K^{\alpha_{0},\beta_{0},...,\alpha_{p},\beta_{p}}_{i_{0}, j_{0},...,i_{p}, j_{p}}
(e^{i_{0}}_{\alpha_{0}}  \times \bar{e}^{j_{0}}_{\beta_{0}} )                
                            \otimes_{\mathit{C}}  ...   \otimes_{\mathit{C}}                            
 (e^{i_{p}}_{\alpha_{p}}  \times \bar{e}^{j_{p}}_{\beta_{p}} ), 
 \end{equation}
 \begin{equation*}
 \mathit{Diam} (e^{i_{0}}_{\alpha_{0}}  \times \bar{e}^{j_{0}}_{\beta_{0}} )                
                            \otimes_{\mathit{C}}  ...   \otimes_{\mathit{C}}                            
 (e^{i_{p}}_{\alpha_{p}}  \times \bar{e}^{j_{p}}_{\beta_{p}} ) \leq N.
\}
\end{equation*} 
A chain belonging to $C_{p}(HS(X))_{N}$ will be called {\em $N$-local}.
\end{definition}  
 \par
In particular, the simplicial distance leads to an increasing filtration of the space of Hilbert-Schmidt operators 
$\mathit{HS} (X)_{N}$ 
\begin{equation}   
\mathit{HS} (X)_{N} \; :=\; \sum_{\alpha \beta, ij} K_{ij}^{\alpha \beta} \; (e_{\alpha}^{i} \times \bar{e}^{j}_{\beta} ), \hspace{0.5cm}
    \mathit{Diam}(e_{\alpha}^{i} \times \bar{e}^{j}_{\beta} ) 
    \leq N.
\end{equation}    
\par
It is clear that
\begin{equation}   
\mathit{HS} (X)_{m} \circ \mathit{HS} (X)_{n}  \subset \mathit{HS}(X)_{m+n+1}.
\end{equation}  
\begin{definition}  
Let $N$ be any natural number such that the supports of the chains of the complex $C_{\ast} (HS)_{N}$ lay in a tubular neighbourhood
of the diagonals $\nabla_{p}$, for any $p$.  Define
\begin{gather}
C^{\Delta}_{\ast}(HS)_{N} := C^{\Delta}_{\ast}(HS) \cap C_{\ast} (HS)_{N} \\   
C^{I}_{\ast}(HS)_{N} := C^{I}_{\ast}(HS) \cap C_{\ast} (HS)_{N}.  
\end{gather}
\end{definition}  
\begin{proposition}  
The Hochschild boundary operator $b$, as well as the homotopy operators $s$ and $S$, do not increase the diameter of the chains. 
\par
Therefore, the homotopy operators $s$, $S$ are well defined in the corresponding complexes $C{\ast}(HS)_{N}$,  $C^{\Delta}_{\ast}(HS)_{N}$,  $C^{I}_{\ast}(HS)_{N}$.
\end{proposition}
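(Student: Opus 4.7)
The plan is to verify, directly on elementary chains and then extend to $L_2$-closures, that each of the three operators $b$, $s$, $S$ produces a chain whose collection of simplex-supports is a \emph{sub-collection} of the simplex-supports of the input. Once that is established, the diameter in the sense of Definition 48 cannot grow, so the filtration $C_\ast(HS)_N$ is preserved; by intersecting with the decompositions already constructed in \S6 this will automatically yield invariance of $C_\ast^{\Delta}(HS)_N$ and $C_\ast^{I}(HS)_N$ as well.

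First I would treat the Hochschild boundary. By formula (6), the only way $b$ can generate a new elementary factor is through a composition of two consecutive elementary kernels $(e^i_\alpha\times\bar e^j_\beta)\circ(e^k_\gamma\times\bar e^l_\eta)$, which is either zero or equals $(e^i_\alpha\times\bar e^l_\eta)$: the resulting factor is supported on the pair of simplices $(\Delta_\alpha,\Delta_\eta)$, both of which were already present. The cyclic face $b_{(p)p}$ behaves identically. Hence each face $b_{(p)k}$ sends an elementary $N$-local chain to an elementary $N$-local chain (or to zero). Next, for the operator $s$, formula (20) inserts the single extra factor $(e^{j_r}_{\beta_r}\times\bar e^{j_r}_{\beta_r})$, whose support $\Delta_{\beta_r}$ already appears on the right side of the preceding factor $(e^{i_r}_{\alpha_r}\times\bar e^{j_r}_{\beta_r})$; no new simplex is introduced. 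Likewise, for the operator $S$, formula (29) splits the first factor $(e^{i_0}_{\alpha_0}\times\bar e^{i_1}_{\alpha_1})$ into $(e^{i_0}_{\alpha_0}\times\bar e^{I}_{\alpha_0})\otimes_C(e^{I}_{\alpha_0}\times\bar e^{i_1}_{\alpha_1})$, using only the simplices $\Delta_{\alpha_0}$ and $\Delta_{\alpha_1}$ that were already present and the distinguished basis element $I=I_{\alpha_0}$ attached to $\Delta_{\alpha_0}$. In all three cases the diameter is therefore unchanged, which proves the first sentence of the proposition.

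For the second sentence I would then combine this with the continuity statements already established: Proposition 32 for $b$, Proposition 33 (giving $\|sK\|=\|K\|$) for $s$, and Proposition 35 (giving $\|SK\|=\|K\|$) for $S$. Since each of these norm estimates is independent of any support condition, the three operators extend continuously to the $L_2$-closures of the subspaces of $N$-local elementary chains, that is, to $C_\ast(HS)_N$. The subspaces $C^{\Delta}_\ast(HS)_N$ and $C^{I}_\ast(HS)_N$ are preserved as well, because the gap/diagonal condition of Definition 6 and the reduced-diagonal condition of Definition 22 are read off from the basis-indices and simplex-labels that already occur in the input chain, and we just verified that no new such data is generated by $b$, $s$, or $S$.

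I do not anticipate a serious obstacle here; the whole proposition is essentially a bookkeeping check that the algebraic identities of \S6 are compatible with the simplicial distance filtration. The one point that deserves attention, and which I would state explicitly, is that the constructions of $s$ and $S$ are \emph{local} in the strong sense that each new tensor factor they insert is supported on a simplex already present among the factors of the input. This locality is exactly what allows the homotopy identities of Lemma 9, Proposition 11 and Proposition 13 to descend unchanged to the filtered complexes $C_\ast(HS)_N$, $C^{\Delta}_\ast(HS)_N$ and $C^{I}_\ast(HS)_N$, and hence to the local subcomplexes needed in \S10 for the proof of Theorem 1.
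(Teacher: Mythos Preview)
Your proposal is correct and is precisely the natural verification the paper has in mind; in fact the paper states this proposition without any proof, treating it as an immediate consequence of the explicit formulas (6), (20), and (29). Your observation that each operator only produces tensor factors supported on simplices already appearing in the input is exactly the point, and your appeal to Propositions 32--35 for the passage to $L_2$-closures is the right way to complete the argument.
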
  

\begin{theorem}  
Let $N$ be any natural number with the property that the supports of the chains of the complex $C_{\ast} (HS)_{N}$ lay in a tubular neighbourhoodof the diagonals $\nabla_{p}$, for any $p$.  Then
\begin{equation}
\{  \;   C_{\ast}(HS)_{N} , b  \; \} \longleftarrow \{  \;  C^{\Delta}_{\ast}(HS)_{N} , b  \; \}  \longleftarrow  \{ \;  C^{I}_{\ast}(HS)_{N} , b \; \}
\longrightarrow \{  \;   C_{\ast}^{AS} (X)_{N} , \partial  \; \}
\end{equation}
are quasi-isomorphisms.
\end{theorem}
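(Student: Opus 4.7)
The plan is to show that every construction of \S 6 and the Alexander-Spanier identification of \S 8.3 is compatible with the diameter filtration of Definition 48, so that each of the three quasi-isomorphisms established without support control restricts to a quasi-isomorphism on the $N$-local sub-complexes. The key analytic ingredient is Proposition 47, which tells us that $b$, $s$ and $S$ do not increase the diameter of an elementary chain; once this is in hand, every homotopy identity from \S 6 is a formal identity of operators that respects the filtration and can therefore be read inside $C_{\ast}(HS)_N$.

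For the first arrow I would begin by noting that the direct sum decomposition of Proposition 7 restricts: the distinction between $C^{0}_{\ast}$ and $C^{\Delta}_{\ast}$ depends only on the combinatorial type of an elementary chain (presence or absence of a gap), while the diameter depends only on the simplex labels, so
\[
C_{\ast}(HS)_{N} \;=\; C^{0}_{\ast}(HS)_{N} \oplus C^{\Delta}_{\ast}(HS)_{N}.
\]
Since $s$ preserves diameter, the proof of Lemma 9 goes through verbatim on $C^{0}_{\ast}(HS)_{N}$, giving acyclicity and hence the quasi-isomorphism $C^{\Delta}_{\ast}(HS)_{N} \hookrightarrow C_{\ast}(HS)_{N}$.

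For the second arrow I would use the identity $\theta = \mathrm{id} - (bS+Sb)$ from Proposition 13: since $b$ and $S$ both preserve diameter, so does $\theta$, and therefore every power $\theta^{k}$, and consequently $\tilde{S}=(1+\theta+\cdots+\theta^{p+1})S$ of Definition 25. The homotopy identity of Proposition 26 (ii) is then a valid operator identity on $C^{\Delta}_{\ast}(HS)_{N}$, and the two-step argument of Proposition 27 — representing any class in $H_{\ast}(C^{\Delta}_{\ast}(HS)_{N})$ by a cycle in $C^{I}_{\ast}(HS)_{N}$ via $\theta^{p+2}$, and recognising a $C^{\Delta}$-boundary of a $C^{I}$-cycle as a $C^{I}$-boundary via $\tilde{S}K + \theta^{p+2}L$ — applies word-for-word, because every auxiliary chain it builds lies in the same filtration degree $N$. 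This gives the quasi-isomorphism $C^{I}_{\ast}(HS)_{N} \hookrightarrow C^{\Delta}_{\ast}(HS)_{N}$ exactly as in Theorem 28.

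For the third arrow I would invoke Theorem 45. The map $K\mapsto\{K\}$ sends a chain of the form (87) to the locally constant Alexander-Spanier chain with value $K^{\alpha_{0},\ldots,\alpha_{p}}$ on the poly-top $\Delta_{\alpha_{0}}\times\cdots\times\Delta_{\alpha_{p}}$; the condition $\mathrm{Diam}\leq N$ of Definition 48 translates directly into the support of $\{K\}$ lying in a tubular neighbourhood of the diagonal of simplicial radius $N$, so the map is a bijective chain isomorphism between the two $N$-local complexes, and (91) of Theorem 45 gives the commutation with boundaries. The remaining check is that the Alexander-Spanier complex $C^{AS}_{\ast}(X)_{N}$ defined by this tubular radius computes the Alexander-Spanier (hence, by Proposition 42 and \S 8.2, the singular) homology for all sufficiently large $N$; this is a direct consequence of Theorem 39 together with the inductive-system formulation of \S 8.1.

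The step I expect to require the most care is the preservation of diameter under iteration of $\theta$, since $\theta^{k}$ as written in (44)--(46) is a sum of many elementary chains. However, inspecting those formulas one sees that every new column index is of the form $[I,\alpha_{j}]$ with $\alpha_{j}$ already occurring in the original chain $K$; no new maximal simplex is ever introduced. Hence each elementary summand of $\theta^{k}K$ has diameter at most that of $K$, and the filtered version of Proposition 47 extends to $\theta^{k}$ and to $\tilde{S}$. Once this bookkeeping is recorded, the three quasi-isomorphisms assemble to the identification of the local Hochschild homology with $H^{AS}_{\ast}(X)$, completing the proof of Theorem 1.
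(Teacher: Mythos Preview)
Your proposal is correct and follows essentially the same route as the paper's own proof: restrict the splitting of Proposition~7 and the homotopy identities for $s$, $S$, $\theta$, $\tilde S$ to the diameter-$N$ filtration (using that none of these operators enlarges the set of simplex labels), and then invoke the $[\,]\!\!\to\!\!\{\,\}$ identification of \S 8.3 on the $N$-local level. Your explicit check that every column appearing in $\theta^{k}K$ is of the form $[I,\alpha_j]$ with $\alpha_j$ already present in $K$ is a useful clarification the paper leaves implicit. One small slip: the last paragraph's ``remaining check'' that $C^{AS}_{\ast}(X)_N$ computes the singular homology is not part of this theorem (the third arrow is literally an isomorphism of chain complexes, so nothing further is needed); that step belongs to the next theorem, and the phrase ``for all sufficiently large $N$'' is backwards---the hypothesis is that $N$ is small enough for the supports to sit in a tubular neighbourhood.
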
  
\begin{proof}  
The notion of gap passes to the {\em local} spaces $C_{\ast}(HS)_{N}$.  Therefore, the splitting stated by Proposition 7 continues to hold in these spaces.
\par
The operator $s$ continues to satisfy the identity (22), so that Proposition 8 holds inside the complex 
$\{  \;   C_{\ast}(HS)_{N} , b  \; \}$ too. These prove that the first arrow of (100) is a quasi-isomorphism.
\par
The $S$ continues to satisfy the identity (33) of Proposition 13 inside the complex $\{  \;  C^{\Delta}_{\ast}(HS)_{N} , b  \; \}$.
The algebraic modifications discussed in the proof of Proposition 27 may be carried out inside the sub-complex
$\{ \;  C^{I}_{\ast}(HS)_{N} , b \; \}$. Therefore, the second arrow of the relation (100) is a quasi-isomorphism.
\par 
The discussion made in \S 8.3. passes to sub-complexes $\{ \;  C^{I}_{\ast}(HS)_{N} , b \; \}$,
$ \{  \;   C_{\ast}^{AS} (X)_{N} , \partial  \; \} $. Theorem 47 remains valid in these contexts.
These prove that the last arrow of the relation (100) is a quasi-isomorphism. This completes the proof of the theorem.
\end{proof}  
Theorem 52. implies the following result
\begin{theorem}   
Let $HS(X)$ denote the algebra of real/complex Hilbert-Schmidt operators on the countable, locally finite, homogeneous, simplicial complex $X$ of dimension $N$.
\par
Let $N$ be any natural number with the property that the supports of the chains of the complex $C_{\ast} (HS)_{N}$ lay in a tubular neighbourhoodof the diagonals $\nabla_{p}$, for any $p$.  
\par
Then the homology of the Hochschild sub-complex
\begin{equation}
\{  \;   C_{\ast}(HS)_{N} , b  \; \}  
\end{equation}
is isomorphic to the real/complex Alexander-Spanier homology of $X$
\begin{equation}
H_{\ast}^{AS} (X)
\end{equation}
\end{theorem}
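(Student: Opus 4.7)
My plan is to derive Theorem 53 essentially as a formal consequence of Theorem 52, by passing to homology along the chain of quasi-isomorphisms it supplies and then identifying the rightmost term with the Alexander-Spanier homology.

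First I would invoke Theorem 52, which under the standing hypothesis on $N$ produces the zig-zag of quasi-isomorphisms
\begin{equation*}
\{C_{\ast}(HS)_{N}, b\} \longleftarrow \{C^{\Delta}_{\ast}(HS)_{N}, b\} \longleftarrow \{C^{I}_{\ast}(HS)_{N}, b\} \longrightarrow \{C^{AS}_{\ast}(X)_{N}, \partial\}.
\end{equation*}
Applying $H_{\ast}(\cdot)$ to each arrow and composing the resulting isomorphisms yields a canonical isomorphism
\begin{equation*}
H_{\ast}\bigl(C_{\ast}(HS)_{N}, b\bigr) \;\cong\; H_{\ast}\bigl(C^{AS}_{\ast}(X)_{N}, \partial\bigr).
\end{equation*}
Thus the remaining task is to identify the right-hand side with $H^{AS}_{\ast}(X)$.

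Next I would argue that $H_{\ast}(C^{AS}_{\ast}(X)_{N}, \partial)$ is precisely the Alexander-Spanier homology, using the framework of \S 8.2. The hypothesis that the supports of $N$-local chains lie in a tubular neighborhood of every diagonal $\nabla_{p}$ means that $C^{AS}_{\ast}(X)_{N}$ is one of the complexes in the cofinal projective system of support-controlled Alexander-Spanier chain complexes used to define $\{C^{AS}_{\ast}, \partial^{AS}\}$ in Definition 42. Since all such sub-complexes have $X$ as a deformation retract (the simplicial analogue of the argument given in \S 8.1 for co-homology), their homologies are canonically identified with the singular/Alexander-Spanier homology $H^{AS}_{\ast}(X)$ of $X$, as stated in Theorem 44 and confirmed in the homological setting by Proposition 45.

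Composing the two identifications gives the asserted natural isomorphism
\begin{equation*}
H_{\ast}\bigl(C_{\ast}(HS)_{N}, b\bigr) \;\cong\; H^{AS}_{\ast}(X).
\end{equation*}
The only point requiring care, and the one I regard as the most delicate, is the verification that the identification of $C^{AS}_{\ast}(X)_{N}$ with a support-controlled Alexander-Spanier complex is compatible with the tubular neighborhood structure coming from the simplicial filtration of \S 9: one has to check that the choice of $N$ large enough to place chains of diameter $\le N$ inside fixed tubular neighborhoods of the diagonals is cofinal among the admissible controls $\mathcal{U}$ used in Definition 42. Granted this cofinality, which is guaranteed whenever $X$ is locally finite and the triangulation is sufficiently fine, the remainder is purely formal.
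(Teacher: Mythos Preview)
Your proposal is correct and follows exactly the paper's approach: the paper simply states that ``Theorem 52 implies the following result'' and gives no further argument, so your derivation via the zig-zag of quasi-isomorphisms from Theorem 52, followed by identification of the Alexander-Spanier side, is just an explicit unwinding of that implication. Two small corrections: the relevant references are Theorem 43 and Proposition 44 (not 44 and 45), and in your final paragraph the condition is not that $N$ be \emph{large} enough but rather that the triangulation be fine enough relative to $N$ so that diameter-$\leq N$ chains sit inside a tubular neighbourhood---this is precisely the standing hypothesis, so no cofinality argument is actually needed here.
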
  

\section{{\em Local} Hochschild Homology of the Algebra of Hilbert-Schmidt Operators.}  

\subsection{Preliminaries.}
We discuss here the {\em local} Hochschild homology of the algebra of Hilbert-Schmidt operators on the simplicial complex $X$,  
see  \S 4.1. Recall that {\em local} Hochschild and cyclic homology were introduced  in \cite{Teleman_arXiv} as a tool to set up the index theorem in a more natural environment. To begin with, for this reason, we are interested in discribing this notion primarily on the algebra of {\em quasi-local, bounded operators on a Hilbert space of $L_{2}$-sections in vector bundles}. This class of operators contains  pseudo-diffeential operators and integral operators.
\par
To start this argument we assume that the $n$-dimensional simplicial complex $X$ is embedded in an Euclidean space. 
Let $\mathit{r}$ be the induced metric on $X$.
\par
As explained in \S 4.1.,  any Hilbert-Schmidt operator on $X$ is defined by its kernel $K: X \times X \longrightarrow \mathit{C}$.
The space of Hilbert-Schmidt operators on $X$ is filtrated by the support if its elements. We have seen in \S 7.1. that the description of the elements of the  Hochschild complex over this algebra involves real/complex valued  $L_{2}$-functions over $(X \times X)^{p+1}$. 
In \S 9. we introduced a filtration of the Hochschild chains based on the simplicial structure of $X$. This filtration has its own interest.
\par
In this section we are going to introduce a new filtration based on the size of the support measured in terms of {\em distance to the diagonal}.
\begin{definition}  
Let $P = (x_{0} ,y_{0}) | x_{1} ,y_{1} | ,...,| x_{p}, y_{p}) \in (X \times X)^{p+1}$. Define the {\em distance from the point P to the main diagonal} $\nabla_{X}^{2(p+1)}$ by
\begin{equation}   
\mathit{r} ((x_{0} ,y_{0}), (x_{1} ,y_{1}), ...,(x_{p}, y_{p}), \; \nabla_{X}^{p+1}\; ) := Max ^{k=p}_{k=0} \{ 
 \mathit{r} (x_{k}, y_{k}),  \mathit{r} (y_{k}, x_{k+1}) \} .   \hspace{0.2cm}  ( x_{p+1} := x_{0})
\end{equation}  
Let $\epsilon$ be any positive number.  Let 
\begin{equation}   
U^{2(p+1)}_{\epsilon} := \{  P \;|\; P \in (X \times X)^{p+1},      \mathit{r} (P)  < \epsilon            \}.
\end{equation}   
\end{definition}   
 \par
 Denote by $HS(X)_{\epsilon}$ the set of all Hilbert-Schmidt operators whose kernel have support in   
 $U^{2(p+1)}_{\epsilon} $.  The main idea of {\em local} Hochschild homology is to consider the homology of the sub-complex of the Hochschild complex  consisting of chains which have small support about the diagonal.
 For chains of degree zero we intend elements of $HS(X)_{\epsilon}$ with $\epsilon$ small. Smoothing operators with arbitrarily small support appear in the Connes-Moscovici local index theorem \cite{Connes-Moscovici} and they hold the topological information relating to the index formula.
 \par
 Looking for chains of degree $p$ which have small support about the diagonal $\nabla^{2(p+1)}_{X}$, we start by considering elements of 
$\otimes^{p+1}_{\mathit{C}} HS(X)_{\epsilon}$. For any element in this set, for any point in its support,  one has 
 $\mathit{r}(x_{k}, y_{k}) < \epsilon$, for any $0 \leq k \leq p$. For the elements in this set there is, so far, no condition on the distances  $\mathit{r}(y_{k}, x_{k+1})$. 
 The support (Supp) of $K_{0} \otimes K_{1} \otimes ,...,\otimes K_{p}$ is 
 $Supp(K_{0}) \times Supp(K_{1})\times,..., \times Supp(K_{p}) $. To insure that the support of this element
 is small about the diagonal $\nabla_{X}^{2(p+1)} := \{ (x,x,...,x) \in (X \times X)^{p+1}  \}$,  we impose also the conditions $\mathit{r}(y_{k}, x_{k+1}) < \epsilon$, for any  $0 \leq k \leq p$. Now we are in the position to define
 precisely the $\epsilon$-{\em local} Hochschild chains in the completed Hochschild complex.
 \begin{definition}    
 An $\epsilon$-{\em local} Hochschild chain of degree $p$ is by definition an element of
 \begin{equation}  
 C_{p}(HS(X))_{\epsilon} := \{   K \;|\; K \in C_{p} (HS(X)), \; Supp (K) \subset U^{2(p+1)}_{ \epsilon }     \}.
 \end{equation}  
 \end{definition}   
\par
Multiplying Hilbert-Schmidt operators increases their support. For any natural numbers $k_{1}$,  $k_{2}$
 \begin{equation}   
 HS(X)_{k_{1}\epsilon}  \circ HS(X)_{k_{2}\epsilon}  \subset HS(X)_{(k_{1} + k_{2}) \epsilon}.
 \end{equation}   
Given that the multiplication of Hilbert-Schmidt operators increases the support (106), the vector spaces  
$C_{p}(HS(X))_{\epsilon} $  satisfy
\begin{equation}   
b \; C_{p}(HS(X))_{\epsilon}  \subset C_{p}(HS(X))_{2\epsilon} .
\end{equation}   
To simplify the notation, we write $C_{p}(HS(X))_{\epsilon} = C_{p, \epsilon}$

\begin{definition}   
For any $0 < \epsilon $ we define
\begin{equation}   
H^{\epsilon}_{p} (HS(X)) := \end{equation}
\begin{equation*}
 = \frac
 { Ker \; b: C_{p,\epsilon} \rightarrow C_{p, 2\epsilon} }{ \{Im \; b: C_{p+1, k\epsilon} \rightarrow C_{p, 2k\epsilon} \}
 \cap \{        Ker \; b: C_{p,\epsilon} \rightarrow C_{p, 2\epsilon}      \}
 },  \; \;    2  \leq k, 
\end{equation*}     
where $k$ is fixed number.
\end{definition} 
For any $\epsilon'  < \epsilon$ one has $ C_{p,\epsilon'}  \subset C_{p,\epsilon} $ and therefore there is an induced mapping in homology
$\Delta(\epsilon, \epsilon'): H^{\epsilon'}_{p} (HS(X))  \rightarrow H^{\epsilon}_{p} (HS(X)) $
\begin{equation}     
\Delta(\epsilon, \epsilon'): \;
  \frac
 { Ker \; b: C_{p,\epsilon'} \rightarrow C_{p, 2\epsilon'} }{ \{Im \; b: C_{p+1, k\epsilon'} \rightarrow C_{p, 2k\epsilon'} \}
 \cap \{        Ker \; b: C_{p,\epsilon'} \rightarrow C_{p, 2\epsilon'}      \}
 } \longrightarrow
\end{equation}     
\begin{equation*}
 \longrightarrow  \frac
 { Ker \; b: C_{p,\epsilon} \rightarrow C_{p, 2\epsilon} }{ \{Im \; b: C_{p+1, k\epsilon} \rightarrow C_{p, 2k\epsilon} \}
 \cap \{        Ker \; b: C_{p,\epsilon} \rightarrow C_{p, 2\epsilon}      \}
 }
\end{equation*}     
\begin{definition}   
The {\em local} Hochschild homology of the algebra of Hilbert-Schmidt operators is given by the formula
\begin{equation}
H^{\mathit{loc}}_{p} (HS(X)) :=  \; \underset{ \epsilon \; \searrow \; 0}{ProjLim}   \;  H^{\epsilon}_{p} (HS(X))
\end{equation}   
\end{definition}   

\subsection{Distance Control of Supports {\em vs.}  Simplicial Control. The Result.}  
\par
Let $K \in C_{p,\epsilon}(HS(X))$,  $0 \leq p \leq n= dim X$.
\par
We intend to look here on the size of the support of $K$ after all algebraic modifications used in \S 5 and \S 6 are performed.
To this purpose notice that the Hochschild boundary $b$ doubles the "diameter" of chains, see (107).
On the  other side, the homtopy operators $s$, (see \S 6.1)  and $S$, (see \S 6.2),  do not modify the size of the support.
\par
We chose an $\epsilon$. We suppose that the diameter of each of the simplices $\Delta_{\alpha}$ is less than $\epsilon$. 
If this condition is not satisfied, we consider a higher order barycentric sub-division. The condition (86) of  Proposition 44  changes by replacing the measure $1$ of each simplex by a constant, depending only on the number of barycentric sub-divisions.  To avoid further complications of the homological picture, we may assume that for each maximal simplex $\Delta_{\alpha}$ the chosen function $I_{\alpha}$ is a normalised constant function, see Definition 22, \S 6.3.  With these precautions taken,  the elementary chains belong 
to $C_{p,\epsilon}(HS(X))$.
\par
For any $K \in K \in C_{p,\epsilon}(HS(X))$ the decomposition (17) of Proposition 7. holds.
\par
Given that the homotopy formulas (22) and (33) involve only once the Hochschild boundary, Lemma 9.  and Proposition 33. hold inside
$C_{p,2 \epsilon}(HS(X))$. The operator $\theta_{p}$ is available in the space $C_{p,2 \epsilon}(HS(X))$. 
\par
Furthermore, the Proposition 27. involves the operator $\theta_{p}$ raised to the maximum power $p+1.$  
\par
To summarise, we may state that all considerations made in the \S 6 hold in the vector spaces
$C_{p,2^{p+2} \epsilon}(HS(X))$. The connection between the {\em local} Hochschild homology and the Alexander-Spanier homology discussed in \S 9 holds provided the supports of the chains sit inside a tubular neighbourhood of the diagonal. 
Now we are in the position to state the
\begin{theorem}    
Let $X$ be any countable, locally finite, homogeneous simplicial complex of dimension $n$. Let $\mathit{r}$ be a distance function, see \S 10.1. 
\par
Let $\epsilon$ be a positive number. Suppose each maximal dimension simplex of $X$ has diameter less than $\epsilon$.
\par
Let $p$ be a natural number. Suppose the simplicial decomposition of $X$ is sufficiently fine so that the set
\begin{equation}  
U_{2^{p+2} \epsilon}^{2(p+1)} \text{\; is a tubular neighbourhood of the diagonal.}
\end{equation}   
Then
\begin{equation}  
H^{\mathit {loc}}_{p} (HS(X)) \text{\; is naturally isomorphic to}  \; H^{AS}_{p} (X).
\end{equation}  
\end{theorem}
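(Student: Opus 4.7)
The plan is to reduce the distance-based local Hochschild complex $\{C_{p,\epsilon}(HS(X)),b\}$ to the simplicial-distance local complex $\{C_{\ast}(HS)_{N},b\}$ of \S9, apply Theorem 53 to identify its homology with Alexander-Spanier homology, and then pass to the projective limit as $\epsilon\searrow 0$. First I would fix $\epsilon>0$ and invoke the hypothesis that every maximal simplex $\Delta_{\alpha}$ has diameter less than $\epsilon$; this guarantees that an elementary chain $(e^{i_{0}}_{\alpha_{0}}\times\bar e^{j_{0}}_{\beta_{0}})\otimes\cdots\otimes(e^{i_{p}}_{\alpha_{p}}\times\bar e^{j_{p}}_{\beta_{p}})$ of simplicial diameter at most $N$ (as in Definition 48) has metric support inside $U^{2(p+1)}_{(2N+1)\epsilon}$, so that the simplicial filtration is cofinal with the metric filtration. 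With $I_{\alpha}$ chosen to be the normalised constant function on $\Delta_{\alpha}$ (so $e^{I}_{\alpha}\otimes\bar e^{I}_{\alpha}$ itself has support in the $\epsilon$-neighbourhood of the diagonal), elementary chains sit inside $C_{p,\epsilon}(HS(X))$.

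Next I would verify that the algebraic machinery of \S6 restricts to the $\epsilon$-local spaces with controlled support expansion. The homotopy operators $s$ and $S$, defined by inserting a diagonal factor $(e^{j_{r}}_{\beta_{r}}\times\bar e^{j_{r}}_{\beta_{r}})$ or $(e^{i_{0}}_{\alpha_{0}}\times\bar e^{I}_{\alpha_{0}})\otimes(e^{I}_{\alpha_{0}}\times\bar e^{i_{1}}_{\alpha_{1}})$, do not increase the diameter of the support, so they map $C_{p,\epsilon}$ into $C_{p+1,\epsilon}$. The Hochschild boundary $b$, on the other hand, composes two consecutive kernels and therefore at worst doubles the diameter of support, as formalised in (107). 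Since $\theta=\mathrm{Id}-(bS+Sb)$ is assembled from one application of $b$ surrounded by $S$, each application of $\theta$ multiplies the support radius by a factor of at most $2$. Consequently the key operator $\tilde S_{(p)}=(1+\theta+\theta^{2}+\cdots+\theta^{p+1})S$ from Definition 25, which is used in Proposition 27 via the full power $\theta^{p+2}$, maps $C_{p,\epsilon}$ into $C_{p+1,2^{p+2}\epsilon}$.

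At this point the hypothesis that $U_{2^{p+2}\epsilon}^{2(p+1)}$ is a tubular neighbourhood of the diagonal becomes essential: it guarantees that every chain produced along the reduction $C_{\ast}(HS)\longleftarrow C^{\Delta}_{\ast}(HS)\longleftarrow C^{I}_{\ast}(HS)$ remains supported in a tubular neighbourhood, so that each of these chains is indeed a \emph{local} Hochschild chain in the sense of Definition 45. In particular the splitting of Proposition 7 and the acyclicity of the gap sub-complex (Lemma 9) carry over, and Proposition 27 together with Theorem 28 yield a quasi-isomorphism $\{C^{I}_{\ast}(HS)\cap C_{\ast,2^{p+2}\epsilon},b\}\hookrightarrow\{C^{\Delta}_{\ast}(HS)\cap C_{\ast,2^{p+2}\epsilon},b\}$ inducing the same homology as $\{C_{\ast,\epsilon}(HS),b\}$ in degree $p$. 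Theorem 47 of \S8.3 then identifies the homology of the reduced diagonal local complex with the Alexander-Spanier complex $\{C^{AS}_{\ast}(X),\partial^{AS}\}$ restricted to chains with support in the same tubular neighbourhood, whose homology is $H^{AS}_{p}(X)$ by Theorem 42 and Proposition 44.

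Finally I would pass to the projective limit in $\epsilon$. The transition maps $\Delta(\epsilon,\epsilon')$ of (109) are compatible with the isomorphisms constructed above because every operator involved is local, and the right-hand Alexander-Spanier side stabilises (refining the covering does not change $H^{AS}_{p}(X)$). Taking $\projlim_{\epsilon\searrow 0}$ produces the natural isomorphism $H^{\mathit{loc}}_{p}(HS(X))\cong H^{AS}_{p}(X)$. The main obstacle I anticipate is the bookkeeping in the second step: tracking the precise diameter blow-up through the composite homotopy $\tilde S$ and confirming that \emph{every} intermediate chain (including the auxiliary $L_{(p+1)}\in C^{\Delta}_{p+1}(HS)$ appearing in the proof of Proposition 27 ii) remains in a tubular neighbourhood. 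This is exactly what forces the bound $2^{p+2}\epsilon$ in the hypothesis and what makes it indispensable to define $H^{\mathit{loc}}$ as a projective limit rather than as the homology of any single $C_{\ast,\epsilon}$.
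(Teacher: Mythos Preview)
Your proposal is correct and follows essentially the same route as the paper: both argue that $s$ and $S$ preserve support size while $b$ at most doubles it, so that $\theta^{p+2}$ and $\tilde S_{(p)}$ land in $C_{\ast,2^{p+2}\epsilon}$, then invoke the tubular-neighbourhood hypothesis to carry the quasi-isomorphism chain $C_{\ast}\leftarrow C^{\Delta}_{\ast}\leftarrow C^{I}_{\ast}\to C^{AS}_{\ast}$ through to the local complexes, and finally appeal to \S8--9 for the Alexander--Spanier identification. Your write-up is in fact more explicit than the paper's (you spell out the projective-limit step and flag the auxiliary chain $L_{(p+1)}$ from Proposition~27(ii)), but the argument is the same.
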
   
\par
The condition (111) tells us that in order to prove Theorem 1 we need to consider a sufficiently fine decomposition
of the simplicial complex $X$ before the constructions made in \S 5 and \S 6 start. This modification does not change the Hilbert-Schmidt algebra; it only changes the representation of its elements. 
This completes the proof of Theorem 1.



\begin{thebibliography}{99}

\bibitem{Connes} Connes A.: Connes A.: Noncommutative Geometry, Academic Press, 1994.

\bibitem{Connes-Moscovici} Connes A., Moscovici H.: Cyclic Cohomology, the Novikov Conjecture Vol. 29 and Hyperbolic Groups,  Topology  Vol. 29, pp.345-388, 1990.

\bibitem{Connes-Sullivan-Teleman} Connes A., Sullivan D., Teleman N.: Quasiconformal Mappings, Operators on Hilbert Space and
Local Formulae for Characteristic Classes, Topology Vol. 33, Nr. 4, pp. 663-681, 1994.

\bibitem{Cuntz} Cuntz J.: Cyclic Theory, Bivariant $K$-theory and the Bivariant Chern-Connes Character. Encyclopedia of Mathematical Sciences. Operator Algebras and Non-commutative Geometry. J. Cuntz, V. F. R. Jones Eds., Springer - Verlag, Berlin, 2004.

\bibitem{Donaldson-Sullivan} Donaldson S. K., Sullivan D.: Quasi-conformal 4-Manifolds, Acta Mathematica., Vol. 163 (1989), pp. 181Ð252.

\bibitem{Gr¿nb¾k } Gr¿nb¾k N.: Bounded Hochschild Cohomology of Banach Algebras with Matrix-like Structure. 
 arXiv: math/0311529v1[math.FA] 28 Nov. 2003.
 
 \bibitem{Puschnigg} Puschnigg M.: Diffeotopy Functors of Ind-Algebras nd Local Cyclic Cohomology. Documenta Mathematica, Vol. 8, pp. 143-245, 2003.
 
 \bibitem{Spanier} Spanier E. H.: Algebraic Topology, McGraw - Hill Series in Higher Mathematics, New York, 1966.

\bibitem{Teleman_IHES} Teleman N.: The Index of Signature Operators on Lipschitz Manifolds.  
Publ. Math. Paris, IHES, Vol. 58, pp. 251-290, 1983 

\bibitem{Teleman_arXiv} Teleman N.: $Local^{3}$ Index Theorem.  arXiv: 1109.6095v1 [math.KT],   28 Sep. 2011.


\end{thebibliography}
\end{document}